\newcommand{\E}{\mathbb{E}}
\newcommand{\N}{\mathbb{N}}
\renewcommand{\P}{\mathbb{P}}
\newcommand{\R}{\mathbb{R}}
\let\dR\R
\let\dN\N
\newcommand{\cU}{\mathcal{U}}
\newcommand{\cV}{\mathcal{V}}
\newcommand{\cO}{\mathcal{O}}
\newcommand{\cW}{\mathcal{W}}
\newcommand{\cN}{\mathcal{N}}
\newcommand{\bu}{\mathbf{u}}
\newcommand{\bi}{\mathbf{i}}
\newcommand{\bt}{\mathbf{t}}
\newcommand{\bs}{\mathbf{s}}
\let\eps\varepsilon
\DeclareMathOperator{\argmin}{argmin}
\DeclareMathOperator{\Hess}{Hess}
\newcommand{\1}{\mathbf{1}}
\newcommand{\half}{\mbox{$\frac 1 2 $}}
\newcommand{\scal}[1]{\langle#1\rangle}
\newcommand{\abs}[1]{\left|#1\right|}
\newcommand{\PAR}[1]{\left(#1\right)}
\newcommand{\ind}[1]{\1_{#1}}
\newcommand{\cl}[1]{\mathrm{Cl}(#1)}
\newcommand{\fullyleadsto}{\looparrowright}
\newcommand{\prb}[2][]{\P_{#1}\left[#2\right]}
\newcommand{\esp}[2][]{\E_{#1}\left[#2\right]}
\newtheorem{corollary}{Corollary}
\newtheorem{lemma}{Lemma}
\newtheorem{proposition}{Proposition}
\newtheorem{assumption}{Growth Condition}
\newtheorem{theorem}{Theorem}
\theoremstyle{definition}
\newtheorem{definition}{Definition}
\theoremstyle{remark}
\newtheorem{example}{Example}
\newtheorem{remark}{Remark}
\begin{document}
	
	\begin{frontmatter}
		
		\title{Ergodicity of the zigzag process}
		\thankstext{T1}{Supported by the European Research Council under the European Union’s Seventh
			Framework Programme (FP/2007-2013) / ERC Grant Agreement number 614492, 
			the French National Research Agency under the grant ANR-12-JS01-0006 (PIECE)
			and the EPSRC grants EP/D002060/1 (CRiSM) and EP/K014463/1 (ilike). 
			}
		
		\begin{aug}
			\author{\fnms{Joris}  \snm{Bierkens}\corref{}\ead[label=e1]{joris.bierkens(AT)tudelft.nl}},
			\author{\fnms{Gareth O.} \snm{Roberts}\ead[label=e2]{g.o.roberts(AT)warwick.ac.uk}}
			\and
			\author{\fnms{Pierre-Andr\'e}  \snm{Zitt}%
				\ead[label=e3]{pierre-andre.zitt(AT)u-pem.fr}}%
			
			
			\runauthor{J. Bierkens, G. Roberts, P.-A. Zitt}
			
			\affiliation{Delft University of Technology, University of Warwick, Universit\'e-Paris-Est-Marne-La-Vall\'ee}
			
			\address{Joris Bierkens \\ Delft Institute of Applied Mathematics \\ Van Mourik Broekmanweg 6 \\ 2628 XE Delft \\ Netherlands. \\
				\printead{e1}}
			
			\address{Gareth O. Roberts \\ Department of Statistics \\ University of Warwick \\ Coventry CV4 7AL \\ United Kingdom. \\
	\printead{e2}}	
			
			\address{Pierre-Andr\'e Zitt \\ Laboratoire d'Analyse et Math\'ematiques Appliqu\'ees (UMR CNRS 8050) \\ Universit\'e-Paris-Est-Marne-La-Vall\'ee \\
					5, boulevard Descartes \\
					Cité Descartes, Champs-sur-Marne \\
					77454 Marne-la-Vallée Cedex 2, France. \\
				\printead{e3}}
			
		\end{aug}
		
		\begin{abstract}
  The zigzag process is a Piecewise Deterministic Markov Process 
which can be used in a MCMC framework to sample from a given target distribution. 
We prove the convergence of this process to its target under very weak 
assumptions, and establish a central limit theorem for empirical averages 
under stronger assumptions on the decay of the target measure. 
We use the classical ``Meyn-Tweedie'' approach \cite{MT2,MT09}. The main difficulty turns 
out to be the proof that the process can indeed reach all the points in the space, 
even if we consider the minimal switching rates.		\end{abstract}
		
		\begin{keyword}[class=MSC]
			\kwd[Primary ]{60F05}
			\kwd[; secondary ]{65C05}
		\end{keyword}
		
		\begin{keyword}
			\kwd{piecewise deterministic Markov process}
			\kwd{irreducibility}
			\kwd{ergodicity}
			\kwd{exponential ergodicity}
			\kwd{central limit theorem}
		\end{keyword}
		
	\end{frontmatter}


\section{Introduction}

\subsection{Motivation}

In recent years there has been a growing interest in the use of Piecewise
Deterministic Markov Process (PDMPs) within the
field of Markov Chain Monte Carlo (MCMC). In MCMC the objective is to simulate
from a `target' probability distribution $\pi$ by designing a Markov chain (or
process) which is ergodic and has stationary distribution $\pi$. Although in
principle MCMC, e.g. in the form of the Metropolis-Hastings algorithm
\cite{Metropolis1953}, can be used to sample from almost any probability
distribution of interest, it can suffer from slow convergence as well as heavy
computational cost per iteration. 

It is for exactly these two reasons that PDMPs are so promising. Firstly, PDMPs
are nonreversible, and it is known that nonreversible Markov processes may
offer faster convergence relative to reversible Markov processes (see e.g.
\cite{Bierkens2015,DiaconisHolmesNeal2000,DuncanLelievrePavliotis2015,Hwang1993,Lelievre2013,Ma2016,ReyBelletSpiliopoulos2015,TuritsynChertkovVucelja2011}) 
Secondly, a remarkable feature of the simulation procedure of 
some PDMPs  is that we can choose to use unbiased estimates of the `canonical'
switching rate without affecting the stationarity of $\pi$. In settings in
Bayesian statistics with large data sets (consisting of $n$ observations, say),
this offers significant benefits \cite{BierkensFearnheadRoberts2016}, reducing
computational effort per iteration from $\mathcal O(n)$ to $\mathcal O(1)$.
Similar computational benefits can be obtained in systems in statistical
physics consisting of many particles \cite{MichelKapferKrauth2014}. The use of PDMPs in sampling is a very active area of current research and (although it is not possible to give a complete list of references) we point the interested reader to \cite{BierkensDuncan2016,BouchardCoteVollmerDoucet2017,MichelKapferKrauth2014,Monmarche2016,Pakman2016,Pakman2017,PetersDeWith2012,Sherlock2017,Vanetti2017,Wu2017}.

The zigzag process (ZZP) is an example of such a Piecewise Deterministic Markov
Process. As the name suggests, PDMPs follow deterministic dynamics, inbetween 
random times where they may jump or change to another deterministic dynamics
(see \cite{Mal15,ABGKZ12} for examples and additional references). 
For example, in the ZZP in $\R^d$, trajectories $X_t$ have a piecewise
constant velocity $\Theta_t$ belonging to the finite set $\{-1,1\}^d$, with
components of the direction changing at random times
\cite{BierkensFearnheadRoberts2016}. These random times are generated from
inhomogeneous Poisson processes which have a space and direction dependent
switching rate $\lambda_i(X_t,\Theta_t)$, for switching the $i$-th component of
$\Theta_t$. Viewed as process in the state space $E := \R^d \times \{-1,1\}^d$,
$(X_t, \Theta_t)_{t \geq 0}$ is a Markov process. The switching intensities
$\lambda_i$ can be chosen in such a way that the marginal density on $\R^d$ of
the stationary probability distribution of $(X_t,\Theta_t)$ is equal to a prescribed
density function $\overline \pi$. Other variants of PDMPs with similar
properties exist, for example the Bouncy Particle Sampler (BPS,
\cite{BouchardCoteVollmerDoucet2017}) which selects its direction from $\R^d$
or the unit sphere in $\R^d$.

In order for a Markov process to be useful in MCMC, it should have the prescribed 
stationary distribution and furthermore the process should be \emph{ergodic}: 
the empirical time averages of a test function $f$ along a trajectory should 
converge to the space average $\int f d\pi$, a property that usually follows
from some kind of irreducibility, meaning roughly speaking that the process should be able to reach any point starting from any other point. 
The first requirement, stationarity, is relatively easy to
satisfy. However the second requirement is certainly non-trivial in the case of
PDMPs. For example, it is known that without `refreshments' of the velocity,
the BPS can be non-ergodic, for instance for any elliptically symmetric distribution such as a multivariate Gaussian \cite{BouchardCoteVollmerDoucet2017}. In contrast, it is
known that the ZZP is ergodic in certain cases in which the BPS is not ergodic
\cite{BierkensFearnheadRoberts2016}, and computer experiments have suggested
that in fact the ZZP is ergodic under only minimal assumptions. 
The main result of this paper is a proof of  ergodicity for the ZZP under very mild and reasonable conditions, giving theoretical justification for its use in MCMC. This gives the ZZP a possible
advantage over the BPS: the practitioner can be confident of the validity of
the ZZP as MCMC algorithm and does not need to worry about tuning a
refreshment parameter, which may slow down
convergence to equilibrium if chosen suboptimally. However other aspects are
also influential in determining speed of convergence and computational
efficiency, and the relative merits of the ZZP versus the BPS is an area of
challenging current and future research. See \cite{Andrieu2018,Bierkens2018a,Deligiannidis2018} for results in this direction.

Once ergodicity is established, one may look for estimates of rates of convergence
to the invariant measure, in various senses. One of the possible approaches to 
establish such results is to find a Lyapunov function. 
For nonreversible processes 
with small noise, it is often very difficult to guess the form 
of a suitable Lyapunov function, and quite technical to prove that it indeed works: 
see for example \cite{Durmus2018,Fet17,Deligiannidis2017}.
In the zigzag case, it turns out that under a reasonable assumption on the 
decay of the target measure $\pi$ at infinity, we are able to find a Lyapunov function
in a quite simple form. Leveraging well known results on long time convergence of 
processes, this proves in particular that the convergence towards the target 
measure $\pi$ occurs exponentially fast, and we also get a central limit theorem for ergodic averages. 

In \cite{BierkensRoberts2015} ergodicity of the one-dimensional zigzag process is established, which is significantly easier than the multi-dimensional case: for the one-dimensional process it is always possible to switch the single direction component along a trajectory, so that irreducibility is relatively straightforward. The examples of Section~\ref{sec:examples} illustrate why proving ergodicity in the multi-dimensional case is fundamentally different. The conditions for exponential ergodicity in the one-dimensional case are weaker than those we impose for the multi-dimensional case, which is due to the fact that the one-dimensional Lyapunov function does not carry over to the multi-dimensional case; see Section~\ref{sec:Lyapunov} for a brief discussion. From a practical viewpoint the slightly stronger conditions which we impose here are very reasonable.


\subsection{Preliminaries}
\label{sec:preliminaries}

We briefly recall the construction of the zigzag process in $E = \R^d \times \{-1,1\}^d$. For details we refer to \cite{BierkensFearnheadRoberts2016}. 

We equip $E$ with its natural product topology, so that a function $(x,\theta)
\mapsto f(x,\theta)$ is continuous if and only if $x \mapsto f(x,\theta)$ is continuous for
every $\theta$. Similarly $f$ is Lebesgue measurable if $x \mapsto
f(x,\theta)$ is measurable for every $\theta$.

For $i = 1, \dots, d$ introduce the mapping $F_i : \{-1,1\}^d \rightarrow
\{-1,1\}^d$ which flips the $i$-th component: For $j = 1, \dots, d$ and $\theta
\in \{-1,1\}^d$,
\[ (F_i \theta)_j = \begin{cases} \theta_j \quad & j \neq i, \\
                     - \theta_j \quad & j = i.
           \end{cases}
\]
Let $U:\dR^d\to \dR$ be a continuously differentiable \emph{potential function}. 
We introduce continuous \emph{switching intensities} (also referred to as
\emph{switching rates}) 
$\lambda_i : E \rightarrow [0,\infty)$, $i =1, \dots, d$, 
and assume that they are linked with the potential through the relation
\begin{equation}
  \label{eq:relation_lambda} \lambda_i(x,\theta) - \lambda_i(x,F_i\theta)  = \theta_i \partial_i U(x), \quad  (x,\theta) \in E,  i = 1, \dots, d.
\end{equation}

An equivalent condition on the switching rates is the existence of a continuous function $\gamma : E \rightarrow [0,\infty)^d$ whose $i$-th component does not depend on $\theta_i$,
\begin{equation} \label{eq:condition-gamma}
  \gamma_i(x, F_i\theta) = \gamma_i(x, \theta), \quad (x,\theta) \in E, i =1,\dots,d,\end{equation}
and which is related to the switching rate through
\begin{equation} \label{eq:lambda-explicit} 
  \lambda_i(x,\theta) = (\theta_i \partial_i U(x))_+ + \gamma_i(x,\theta), \quad (x,\theta) \in E, i =1, \dots, d.
\end{equation}
Here $(a)_+ := \max(0,a)$ is the positive part of $a \in \R$.
We call $\gamma$ the \emph{excess switching intensity} and $\lambda$ satisfying~\eqref{eq:lambda-explicit} with  $\gamma \equiv 0$ the \emph{canonical switching intensity}.

For $(x,\theta) \in E$, we construct a trajectory of $(X,\Theta)$ of the zigzag process with initial condition $(x, \theta)$ and switching intensities $\lambda(x,\theta)$ as follows. First we construct a finite or infinite sequence of \emph{skeleton points} $(T^k, X^k, \Theta^k)$ in $\R_+ \times E$ by the following iterative procedure.

\begin{itemize}
 \item Let $(T^0, X^0, \Theta^0) := (0, x,\theta)$.
 \item For $k = 1, 2, \dots$ 
 \begin{itemize}
 \item Let $x^k(t) := X^{k-1} + \Theta^{k-1} t$, $t \geq 0$
 \item For $i = 1, \dots, d$, let $\tau^k_i$ be distributed according to
 \[ \P(\tau^k_i \geq t) = \exp \left( - \int_0^t \lambda_i(x^k(s), \Theta^{k-1}) \ d s \right).\]
 \item Let $i_0 := \argmin_{i \in \{1, \dots, d \}} \tau^k_i$ and let $T^k := T^{k-1}+\tau^k_{i_0}$. In principle, it is possible that $\tau^k_i = \infty$ for all $i$ in which case the value of $i_0$ will turn out to be irrelevant and we set $T^k := \infty$.
 \item If $T^k < \infty$ let $X^k :=x^k(T^k)$ and $\Theta^k = F_{i_0} \Theta^{k-1}$ and repeat the steps. If $T^k = \infty$, terminate the procedure.
\end{itemize}
\end{itemize}
The piecewise deterministic trajectories $(X_t, \Theta_t)$ are now obtained as 
\[ (X_t,\Theta_t) := (X^k + \Theta^k (t- T^k), \Theta^k), \quad \quad  \mbox{$t \in [T^k, T^{k+1})$}, \quad k = 0, 1, 2, \dots,\]
defining a process in $E$ with the strong Markov property.

Informally, the process moves in straight lines, only changing velocities at the times $T^k$. 
  In the case of canonical switching rates $\lambda_i(x,\theta) = (\theta_i\partial_i U(x))_+$, 
  a change in the $i$\textsuperscript{th} component $\theta_i$ of the 
  velocity may only happen when in this direction, the process is going ``uphill'', that is, if $ \theta_i \partial_i U(x) >0$. 
  Note in particular that if following the current velocity increases $U$, 
  then $\scal{\theta, \nabla U(x)}>0$ and at least one of the components 
  has a positive rate of jump. 

We further impose an integrability condition on the potential function:
\begin{equation} \label{eq:integrability-potential} Z:= \int_{\R^d} \exp(-U(x)) \, d x < \infty.\end{equation}
Under this condition the zigzag process has a stationary probability distribution given by
\[ \pi(A \times \{ \theta\}) = \frac{1}{2^d Z} \int_A \exp(-U(x)) \, d x, \quad \quad \mbox{$A$ Lebesgue measurable and $\theta \in \{-1,1\}^d$}.\]
We will use the notation $\overline \pi(\cdot)$ for the marginal density function on $\R^d$, i.e. $\overline \pi(x) = \exp(-U(x))/Z$, $x \in \R^d$.

\subsection{Why ergodicity of the ZZP is non-trivial}
\label{sec:examples}

First consider a simple non-problematic case, where at every point in space all
switching rates $\lambda_i$ are positive. This can be achieved by letting
$\lambda_i(x,\theta) = \max(0, \theta_i \partial_i U(x)) + \gamma(x)$ where the
excess switching rate $\gamma : \R^d \rightarrow (0,\infty)$ assumes only
positive values. At an intuitive level, it is reasonable that such a process
can reach any point in the state space, since by making a certain number of
switches we can change direction to any direction in $\{-1,1\}^d$. These
directions span $\R^d$. After reaching an arbitrary point in $\R^d$ we can
switch to any desired final direction. Although we can not change direction
instantaneously but only over a time interval of positive length, the method
above enables us to reach any point in $\R^d \times \{-1,1\}^d$ to arbitrary
precision (and in fact, as will turn out, exactly).

However, having non-zero values for $\gamma(x,\theta)$ is not beneficial for
efficiency: the zigzag process becomes more diffusive as $\gamma_i$ increases
which results in higher computational costs, see e.g. \cite{BierkensDuncan2016}
for a detailed investigation of this phenomenon in the one-dimensional case.
Therefore we are mainly interested in the question of ergodicity for the case
in which $\gamma_i(x,\theta) = 0$ for all $i$, $x$ and $\theta$, i.e. for the
canonical switching rates.

The expression for the canonical switching rates immediately tells us that one
or more of the components of $\lambda$ are zero in large parts of the state
space. If the switching rate is zero on a set, it means that while the
trajectory moves within this set, there is no freedom to switch the components
of the direction vector. As a consequence it is far from obvious how to
construct trajectories between any two given points $(x,\theta)$ and $(y,\eta)$
in the state space, which could be a realization of a canonical ZZP trajectory.

To illustrate the difficulties, let us discuss three examples highlighting
what could go wrong with the zigzag process.

\begin{example}[A non-smooth example]

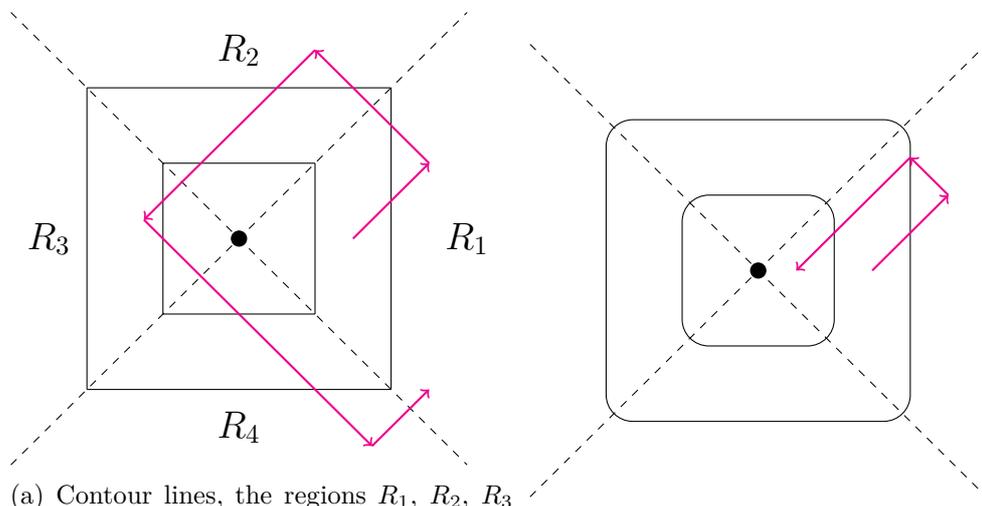
\begin{figure}[ht!]
{\centering 
\begin{subfigure}{0.45 \textwidth}\begin{tikzpicture}

\draw [fill = black]  (0,0) circle (0.1);

// inner contour
\draw (1,-1) -- (1,1);
\draw (1,1) -- (-1,1);
\draw (-1,1) -- (-1,-1);
\draw (-1,-1) -- (1,-1);

// outer contour
\draw (2,-2) -- (2,2);
\draw (2,2) -- (-2,2);
\draw (-2,2) -- (-2,-2);
\draw (-2,-2) -- (2,-2);

\draw [dashed] (-3,-3) -- (3,3);
\draw [dashed] (-3, 3) -- (3,-3);

\node at (3.0, 0.0) {\Large $R_1$};

\node at (0.0, 2.5) {\Large $R_2$};

\node at (-2.5, 0.0) {\Large $R_3$};

\node at (0.0, -2.5) {\Large $R_4$};

\draw [->, thick, magenta] (1.5,0) -- (2.5, 1);

\draw [->, thick, magenta] (2.5, 1) -- (1, 2.5);

\draw [->, thick, magenta] (1, 2.5) -- (-1.25, 0.25);

\draw [->, thick, magenta] (-1.25, 0.25)-- (7/4, -11/4);

\draw [->, thick, magenta] (7/4, -11/4) -- (2.5, -2);

\end{tikzpicture}\caption{Contour lines, the regions $R_1$, $R_2$, $R_3$ and $R_4$, and a typical trajectory for the potential function $U(x) = \max(|x_1|,|x_2|)$. From the displayed starting position it is impossible to reach a point in $R_1$ with direction $(-1,-1)$.}\end{subfigure}
\ \begin{subfigure}{0.45 \textwidth}\begin{tikzpicture}

\draw [fill = black]  (0,0) circle (0.1);

// inner contour
\draw [rounded corners=10pt]  (-1,-1) rectangle (1, 1) ;

// outer contour
\draw [rounded corners=10pt] (-2,-2) rectangle (2,2);

\draw [dashed] (-3,-3) -- (3,3);
\draw [dashed] (-3, 3) -- (3,-3);





\draw [->, thick, magenta] (1.5,0) -- (2.5, 1);

\draw [->, thick, magenta] (2.5, 1) -- (2, 1.5);

\draw [->, thick, magenta] (2, 1.5) -- (.5,0);





\end{tikzpicture}\caption{Once we `smoothen' the potential function slightly, it becomes possible to switch the second coordinate of the direction vector, making the process irreducible.} \end{subfigure}
\caption{The canonical zigzag process for $U(x) = \max(|x_1|,|x_2|)$ and a smoothed version of $U$.}
\label{fig:maximum}
}
\end{figure}

As an example of what can go wrong, consider the potential function $U : \R^2 \rightarrow \R$ given by $U(x) = \max(|x_1|,|x_2|)$. Having only a weak derivative, this example falls just outside the assumptions we will make in the formulation of the main results.
Ignoring the diagonals $x_2 = x_1$ and $x_2 = -x_1$, divide the plane into four regions:
\begin{align*} R_1 & = \{ (x_1,x_2) : x_1 > |x_2|\}, \quad & R_2 & = \{ (x_1,x_2) : x_2 > |x_1|\}, \\
R_3 & = \{ (x_1,x_2): x_1 < -|x_2|\}, \quad & R_4 & = \{(x_1,x_2) : x_2 < -|x_1|\}.
\end{align*}
The potential $U$ is almost everywhere differentiable, with
\[ \partial_1 U(x_1,x_2) = \begin{cases} 1 & \text{in } R_1,  \\
			-1 & \text{in } R_3, \\
                        0 & \text{in } R_2 \cup R_4,
                       \end{cases} \quad \mbox{and} \quad  \partial_2 U(x_1,x_2) = \begin{cases} 1 & \text{in } R_2, \\
			-1 & \text{in } R_4, \\
			 0 & \text{in } R_1 \cup R_3,
                       \end{cases}\]
and except for pathological initial values (along the diagonals), the switching
rates are well defined (albeit discontinuous) and we can construct a zigzag
process with these switching rates.  Suppose we start a trajectory with initial
condition $(x_1,x_2) \in R_1$ and initial direction $\theta = (+1,+1)$. The
trajectory will remain in $R_1$ at least until one of the components is
switched. The only component which has a positive switching rate is the first
component: $\lambda_1(x,\theta) = 1$ and $\lambda_2(x,\theta) = 0$ for $x \in
R_1$ and $\theta = (+1,+1)$. Therefore we will switch at some point to the
direction $(-1,+1)$, after which we will eventually reach the region $R_2$. We
can repeat this argument to find that, with full probability, we will
subsequently enter the regions $R_3$, $R_4$ and $R_1$ with directions
$(-1,-1)$, $(+1,-1)$ and $(+1,+1)$, respectively. In particular, from the given
initial condition it is impossible to reach a point in $R_1$ with a direction
$\theta$ for which $\theta_2 = -1$, and we conclude that the zigzag process is
not irreducible. If we consider a slightly smoothed version of the potential
function the associated zigzag process is irreducible on the combined
position-momentum space $E = \R^2 \times \{-1,1\}^2$. See
Figure~\ref{fig:maximum} for an illustration of this example.
\end{example}

\begin{example}[Gaussian distributions]

\label{ex:gaussian}

In this example we consider what may go wrong in the fundamental case of a Gaussian target distribution. Consider first the standard normal case, $U(x) = \half \|x\|^2$, so that $\nabla U(x) = x$ and $\lambda_i(x,\theta) = \max(0, \theta_i x_i)$. As a result, starting from $(x,\theta)$,
\[
  \lambda_i(x + \theta t, \theta) = (\theta_i (x_i + \theta_i t))_+ = ( \theta_i x_i + t)_+.
\]
We see that in this situation, as $t$ increases, eventually the switching rate
in any component becomes positive. This means that after travelling in a
certain direction, we may switch any component of the direction vector. The
same holds for Gaussian distributions with a diagonally dominant inverse
covariance matrix. In our first attempts to prove irreducibility this provided
us with a concrete way of building trajectories between any two points.

\begin{figure}
\begin{center}
\begin{subfigure}[b]{0.4 \textwidth}{\includegraphics[width = \textwidth]{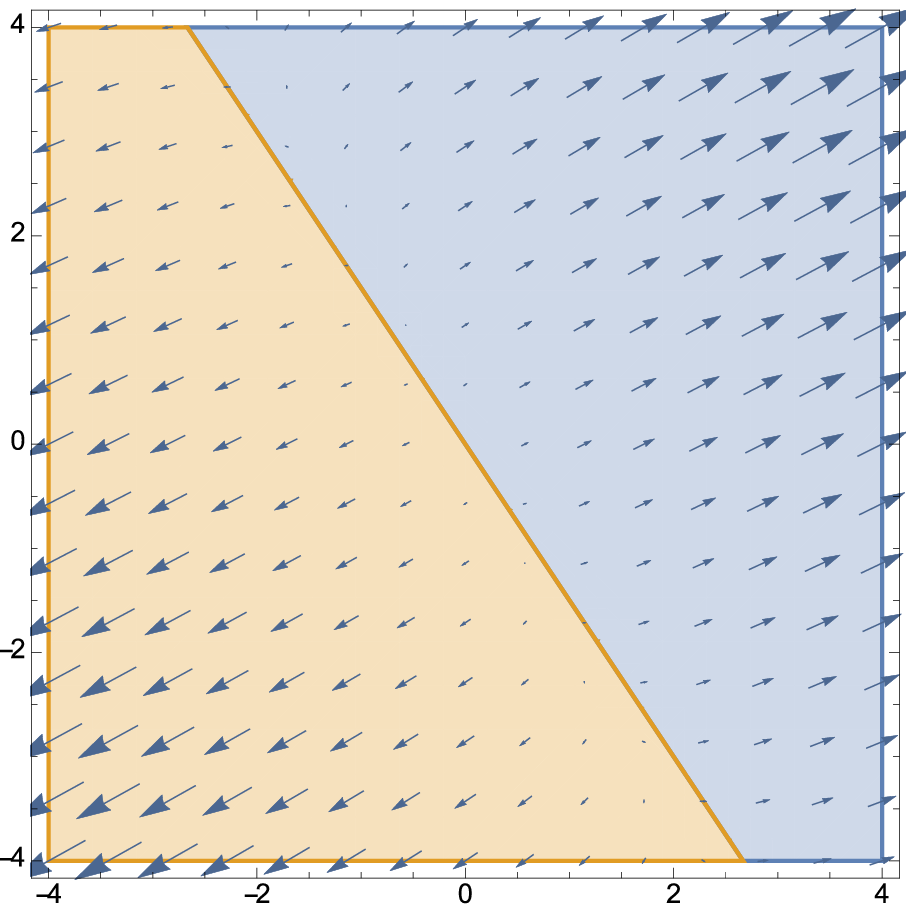}} 
\caption{The gradient vector field $\nabla U$}
\end{subfigure}
\begin{subfigure}[b]{0.4 \textwidth}{\includegraphics[width = \textwidth]{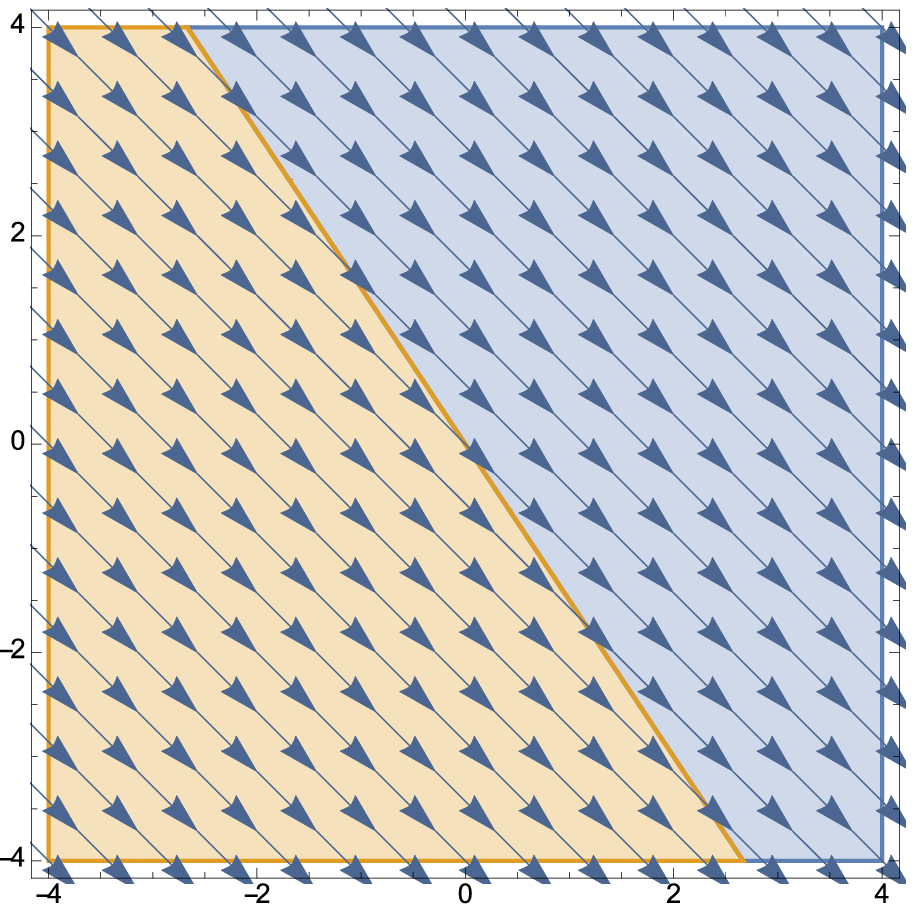}} 
\caption{The constant vector field $(+1,-1)$}
\end{subfigure}
\end{center}

{\small
An example in the setting of Example~\ref{ex:gaussian} in which the switching
rate in the second coordinate drops to zero after being non-zero initially.
Consider a two-dimensional Gaussian target distribution, with potential
function $U(x) = \half x^{\top} V x$, where $V = \begin{pmatrix}  6 & 3 ; &  3
  & 2 \end{pmatrix}$ (which is positive definite, but not diagonally dominant).
In Figure (a) the gradient field of $U$ is drawn. The region where $\partial_2
U > 0$ is shaded blue. In Figure (b) the constant vector field $\theta =
(+1,-1)$ is superimposed over the division between regions. If a trajectory
follows this vectorfield, coming from the yellow region where $\partial_2 U <
0$, at some point it enters the blue region. At this point the switching rate
for $\theta_2$, i.e. $\lambda_2(x,\theta) = \max(0, -\partial_2 U(x))$, drops
to zero. The conclusion is that switching rates of individual components are
not necessarily strictly increasing along the piecewise linear segments of the
trajectory, contrary to what intuition may suggest.
}
\caption[Example~\ref{ex:gaussian}]{A non diagonally dominant Gaussian case}
\label{fig:switch_rate_drops_to_zero}
\end{figure}

However, we should be careful since it is not always the case that, for large enough $t$, we can switch any component of the direction vector, even in ideal situations (e.g. with a strictly convex potential).
For example in a two dimensional Gaussian case, it may happen that the switching rate in a certain component may drop from being positive to zero as time increases. See Figure~\ref{fig:switch_rate_drops_to_zero} for an illustration of this phenomenon.

\end{example}

\begin{example}[Ridge]
\label{ex:ridge}

\begin{figure}
  \begin{minipage}{0.5\textwidth}
\includegraphics[width=\textwidth]{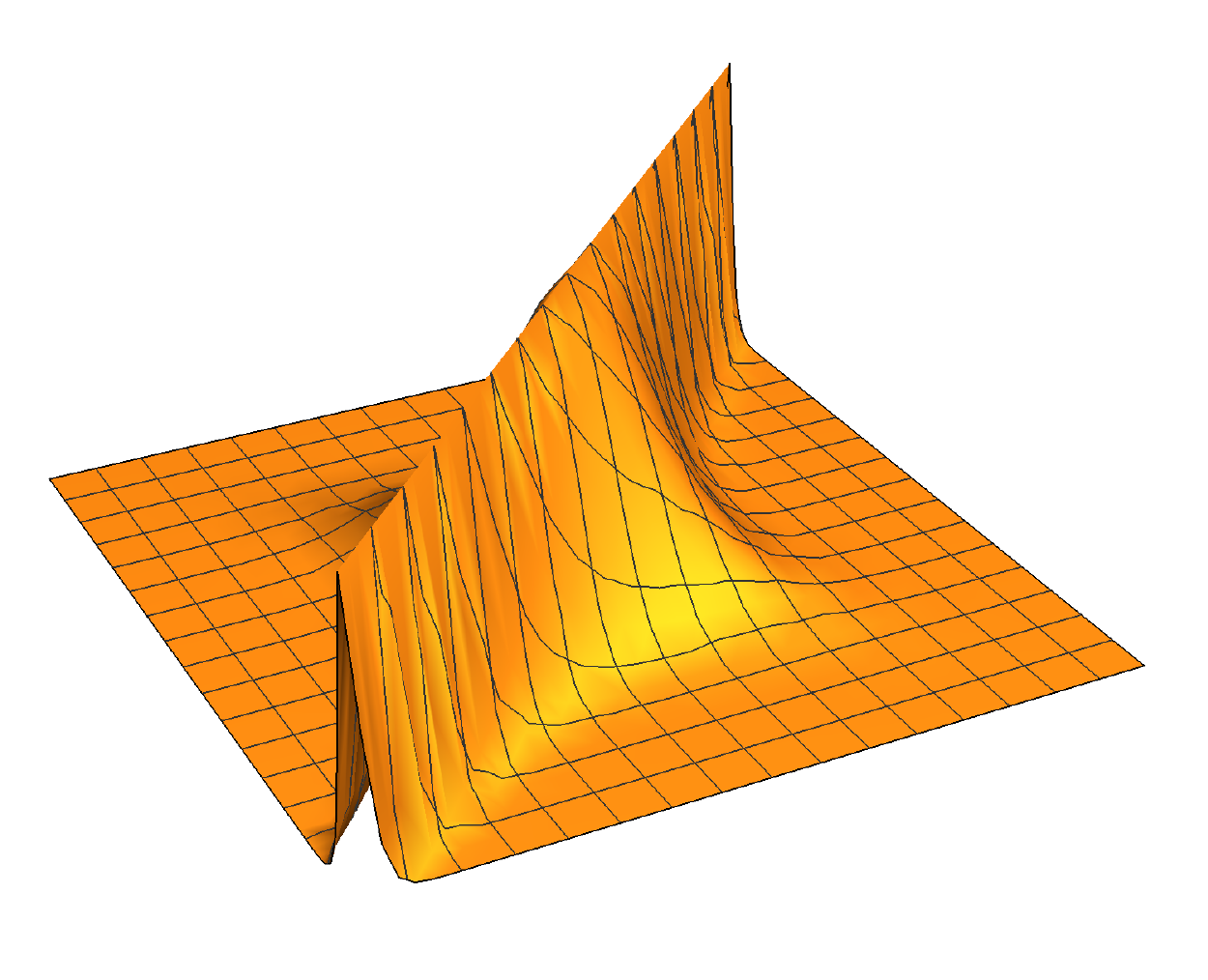}
\end{minipage}
\hfill
\begin{minipage}{0.4\textwidth}
  \small
A continuously differentiable probability density function in two dimensions which has the property that along a narrow ridge the slope vanishes.
\end{minipage}
\caption{The ``ridge'' example}
\label{fig:ridge}
\end{figure}

Consider a two-dimensional case in which $U(x_1,x_2) = |x_1-x_2|^{2 \alpha} (1
+ |x_1+x_2|^2)$, where $\tfrac 1 2 < \alpha < 1$. Note that $U(x_1,x_2)$ is
continuously differentiable and it can be seen that $\int_{\R} \int_{\R}
\exp(-U(x_1,x_2)) \, d x_1 \, d x_2< \infty$, so that $U$ is (after
normalization) the potential of a probability distribution on $\R^2$.  However
a simple computation yields that the gradient $\nabla U$ vanishes along the
diagonal $x_2 = x_1$, which is oriented with the directions $\pm (1,1)$. As a
consequence, starting from some initial condition $(x_1,x_2)$ satisfying $x_2 =
x_1$ in the direction $\pm (1,1)$, it will be impossible to switch any
component of the direction vector and inevitably we will drift off to infinity.
The function $\exp(-U(x_1,x_2))$ corresponds to a narrow ridge, along which the
derivative of $U$ vanishes; see Figure~\ref{fig:ridge}. As we will see, it is
essentially the fact that $U(x_1,x_2) \not\to \infty$ as $(x_1,x_2) \rightarrow
\infty$ which results in this evanescent behaviour. The lack of a nondegenerate
local minimum (our other fundamental assumption to prove irreducibility) is
less problematic. This is because the shape of $U$ can be modified smoothly
around the origin to have a local nondegenerate minimum, without removing the
possibility of drifting away to infinity.

\end{example}

\subsection{Main results}
\label{sec:mainresults}

We introduce three `growth conditions', i.e. conditions on the tail behaviour of the potential function.

\begin{assumption}
\label{ass:GC1}
$U \in \mathcal C^2$ and $\lim_{|x| \rightarrow \infty} U(x) = \infty$.
\end{assumption}

\begin{assumption}
\label{ass:GC2}
$U \in \mathcal C^2$ and for some constants $c > d$, $c' \in \R$, $U(x) \geq c \ln (|x|) - c'$ for all $x \in \R^d$.
\end{assumption}

\begin{assumption}
\label{ass:GC3}
$U \in \mathcal C^2$, 
\[  \lim_{|x| \rightarrow \infty} \frac{\max(1, \| \Hess U(x)\|)}{\abs{\nabla U(x)}} =0, \quad \text{and} \quad 
 \lim_{|x| \rightarrow \infty} \frac{\abs{\nabla U(x)}}{U(x)} = 0.\]
\end{assumption}

The following theorems are the main results of this paper.

\begin{theorem}[Ergodicity]
\label{thm:ergodicity}
Suppose the potential function is $\mathcal C^3$, has a nondegenerate local
minimum and satisfies Growth Condition~\ref{ass:GC2}. Then the zigzag process
is \emph{ergodic}, in the sense that 
\[ 
  \lim_{t \rightarrow \infty} \|\prb[(x,\theta)]{ (X_t,\Theta_t) \in \cdot} - \pi \|_{\mathrm{TV}} = 0
  \quad \text{for all $(x,\theta) \in E$.}
\]
\end{theorem}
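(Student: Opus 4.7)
The overall strategy is the classical Meyn--Tweedie route. Since the preliminaries already identify $\pi$ as an invariant probability measure, to obtain convergence in total variation from every starting point it suffices to verify that the zigzag process is $\varphi$-irreducible for some nontrivial reference measure $\varphi$, that it is aperiodic, and that it is positive Harris recurrent; the standard results of \cite{MT2,MT09} then yield the conclusion. The natural choice is for $\varphi$ to be the product of Lebesgue measure on $\R^d$ with counting measure on $\{-1,1\}^d$.

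The main obstacle is $\varphi$-irreducibility, for exactly the reasons highlighted in Section~\ref{sec:examples}: with canonical rates $\lambda_i = (\theta_i \partial_i U)_+$, the switching intensity vanishes on large portions of $E$, so it is not obvious that admissible trajectories exist between arbitrary states. My plan is to use the nondegenerate local minimum $x^\star$ of $U$ as a controllability hub, and to proceed in three stages. \emph{Stage 1.} From any initial $(x,\theta) \in E$, with positive probability the trajectory enters a small ball $B$ around $x^\star$ in finite time. Here one exploits $U \to \infty$ (a consequence of Growth Condition~\ref{ass:GC2}) to confine the process to compact sublevel sets, together with the strict positivity of $|\nabla U|$ away from critical points, which guarantees that sufficiently many switches occur for the trajectory to be steered toward $x^\star$. \emph{Stage 2.} In a neighborhood of $x^\star$ the Hessian of $U$ is positive definite, so $\partial_i U$ has a definite sign on the relevant half-spaces about $x^\star$. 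Consequently the canonical rates allow any prescribed subset of the components of $\theta$ to be flipped by a short sequence of linear segments, and one can construct a concrete admissible trajectory ending in any desired state $(y,\eta)$ near $(x^\star,\eta)$. \emph{Stage 3.} By perturbing the switching times of such a constructed trajectory and invoking the $\mathcal C^3$ regularity of $U$, one shows via a submersion / implicit-function argument that the endpoint map into $\R^d$ has surjective differential at some admissible parameter. It follows that $P_t((x,\theta),\cdot)$ admits a strictly positive density component on an open subset of $E$, which gives $\varphi$-irreducibility.

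The remaining ingredients are comparatively routine. Because Stage~3 produces a minorization on a nonempty open set, compact sets are petite, and the existence of the invariant probability $\pi$ combined with $\varphi$-irreducibility yields positive Harris recurrence via the standard Meyn--Tweedie dichotomy. Aperiodicity follows from the fact that the reachable endpoint sets in Stage~3 are stable under small perturbations of the total travel time $t$, so the density component is present for all $t$ in an open interval, excluding any cyclic decomposition. Total-variation convergence then follows. The technically delicate point --- and the one that will occupy most of the paper --- is Stage~3 of the irreducibility argument, where one must convert the geometric structure of $\nabla U$ near $x^\star$ and the pattern of sign changes in the switching rates along a trajectory into a quantitative statement about the rank of a smooth endpoint map.
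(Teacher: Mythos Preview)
Your overall Meyn--Tweedie structure is right, and Stages~2 and~3 are essentially what the paper does (Lemma~\ref{lem:nondegeneratelocalminimum} for the local analysis near $x^\star$, Lemma~\ref{lem:continuous_component} for the submersion/density component). There are, however, two genuine gaps.

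\textbf{Stage~1 is not justified, and is in fact the heart of the matter.} Your sketch says that $U\to\infty$ ``confines the process to compact sublevel sets'' and that $|\nabla U|>0$ away from critical points ``guarantees that sufficiently many switches occur for the trajectory to be steered toward $x^\star$''. Neither claim holds. The zigzag process is not a gradient flow: it can run uphill, so $U(X_t)$ is not monotone and there is no confinement to sublevel sets. More seriously, even when $|\nabla U|>0$ the canonical rate $\lambda_i=(\theta_i\partial_i U)_+$ may vanish for exactly the component one would like to flip; Example~\ref{ex:gaussian} shows that even for a Gaussian target a component's rate can drop from positive to zero along a segment, so ``enough switches occur'' does not translate into ``steerable toward $x^\star$''. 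The paper does \emph{not} prove Stage~1 directly. It proceeds indirectly: (i) solve the reachability problem completely for Gaussian targets (Proposition~\ref{prop:gaussian_reachability}), which already requires a nontrivial argument via ``asymptotically flippable'' velocities; (ii) transfer this to a ball around $x^\star$ by a $\mathcal C^3$ perturbation (Lemma~\ref{lem:nondegeneratelocalminimum}); (iii) prove \emph{full flippability}---from any state one can eventually flip every coordinate---using only $U\to\infty$ (Proposition~\ref{prop:full_flip}); (iv) show, via an equivalence-class argument mixing an invariance identity for $\pi$ with openness (Proposition~\ref{prop:stabilityOfOpenClasses}, Theorem~\ref{thm:reachability}), that the open class containing the ball around $x^\star$ must be all of $E$. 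Thus the step you treat as routine is precisely where the work lies, whereas the submersion step you flag as delicate is comparatively standard.

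\textbf{Non-evanescence is missing, and this is where GC\ref{ass:GC2} is actually used.} You invoke only the consequence $U\to\infty$ (GC\ref{ass:GC1}) and then assert that $\varphi$-irreducibility together with the invariant probability gives positive Harris recurrence ``via the standard Meyn--Tweedie dichotomy''. That dichotomy yields recurrence versus transience, but Harris recurrence---return with probability one from \emph{every} starting point, not merely $\pi$-a.e.---requires ruling out escape to infinity from exceptional points. For $\psi$-irreducible $T$-processes, Harris recurrence is equivalent to non-evanescence \cite[Theorem~3.2]{MT2}, and the paper proves non-evanescence separately (Proposition~\ref{prop:SNE}) by an induction on dimension that uses the quantitative logarithmic lower bound in GC\ref{ass:GC2}, not merely $U\to\infty$. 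Without this ingredient your argument does not deliver total-variation convergence from every $(x,\theta)\in E$.
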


The proof of Theorem~\ref{thm:ergodicity}  also establishes that the process 
is positively Harris recurrent (see Section~\ref{sec:probabilistic} below for a precise definition), so 
that the  Law of Large Numbers holds (see e.g. \cite{Maruyama1959}): for all initial conditions
$(x,\theta) \in E$ and $g \in L^1(\pi)$ for which $s \mapsto g(X_s, \Theta_s)$ is almost surely locally integrable,
\[ 
\lim_{T \rightarrow \infty} \frac 1 T \int_0^T g(X_s,\Theta_s) \, ds = \pi(g), \quad \text{almost surely}.
\]

\begin{theorem}[Exponential ergodicity]
\label{thm:exponential-ergodicity}
Suppose $U \in \mathcal C^3$, $U$ has a nondegenerate local minimum and Growth Condition~\ref{ass:GC3} is satisfied. Suppose the excess switching rates $(\gamma_i)_{i=1}^d$ are bounded. Then the zigzag process is \emph{exponentially ergodic}, that is, 
there exists a function $M:E\to \dR_+$ and a constant $c>0$ such that
 \[
   \| \prb[(x,\theta)]{ (X_t,\Theta_t) \in \cdot} - \pi \|_{\mathrm{TV}}
   \leq M(x,\theta)e^{-ct} \quad \text{for all $(x,\theta) \in E$ and $t \geq 0$.}
 \]
\end{theorem}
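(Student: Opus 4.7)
The plan is to use the classical Meyn--Tweedie geometric drift criterion for continuous-time Markov processes \cite{MT09}: once I exhibit a Lyapunov function $V : E \to [1,\infty)$, constants $c, b > 0$, and a petite set $K$ such that the extended generator satisfies
\[
  \mathcal L V(x,\theta) \leq -c V(x,\theta) + b\, \1_K(x,\theta), \quad (x,\theta) \in E,
\]
the exponential decay in $V$-weighted total variation (hence in plain total variation, with $M$ proportional to $V$) follows. The $\psi$-irreducibility, aperiodicity, and the fact that compact subsets of $E$ are petite are already available from the proof of Theorem~\ref{thm:ergodicity}: Growth Condition~\ref{ass:GC3} is strictly stronger than GC\ref{ass:GC2}, because $\max(1,\|\Hess U(x)\|)/|\nabla U(x)|\to 0$ forces $|\nabla U(x)| \to \infty$ as $|x|\to\infty$, which in turn forces $U$ to grow at least linearly.

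The heart of the matter is therefore the construction of $V$. The naive $V(x,\theta) = e^{\alpha U(x)}$ does not work: since it is $\theta$-independent, the switching part of $\mathcal L V$ vanishes identically and the drift part $\alpha e^{\alpha U(x)}\langle \theta, \nabla U(x)\rangle$ has no definite sign (positive exactly when the process moves uphill). The fix is to add a bounded, direction-dependent correction so that the switches forced on the process when it moves uphill actively decrease $V$. My plan is to try
\[
  V(x,\theta) = \exp\bigl(\alpha U(x) + \beta \Psi(x,\theta)\bigr)
\]
for small $\alpha,\beta>0$, with $\Psi$ a bounded continuous function of $(x,\theta)$, comparable to a regularized version of the alignment $\langle \theta, \nabla U(x)\rangle/\max(1,|\nabla U(x)|)$, such that the flip $\theta\mapsto F_i\theta$ at a component with $\theta_i\partial_i U(x)>0$ decreases $\Psi$ by an amount proportional to $\theta_i\partial_i U(x)/|\nabla U(x)|$.

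Verifying the drift inequality outside a compact set is the main obstacle. Expanding $\mathcal L V/V$ gives three contributions: (i) an uphill drift term $\alpha \langle \theta, \nabla U(x)\rangle$, of order $|\nabla U|$; (ii) a spatial-derivative term from $\Psi$ which, since $\Psi$ is built from $\nabla U/|\nabla U|$, produces factors of $\|\Hess U(x)\|/|\nabla U(x)|$ and is therefore $o(1)$ by the first part of GC\ref{ass:GC3}; and (iii) a jump term consisting of a canonical part whose Taylor-expanded value is comparable to $-\beta|\nabla U|$ times a nonnegative quadratic form in the uphill components of $\theta\cdot v$ (with $v=\nabla U/|\nabla U|$), plus an $O(\|\gamma\|_\infty)=O(1)$ perturbation from the bounded excess rates. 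The delicate part is calibrating $\alpha$ and $\beta$ so that the canonical jump dominates the uphill drift \emph{uniformly in} $\theta$, including for directions only weakly aligned with $\nabla U$; this seems to require an elementary Cauchy--Schwarz-type inequality between $(\theta\cdot v)_+$ and $\sum_{i:\theta_i v_i>0}(\theta_i v_i)^2$, together with careful design of $\Psi$. Once such a $V$ is constructed, one obtains a bound $\mathcal L V\leq -c'|\nabla U|\,V + O(V)$ outside a compact set; since $|\nabla U|\to\infty$, this converts to $\mathcal L V\leq -cV$ on the complement of a sufficiently large compact $K$. The second half of GC\ref{ass:GC3}, $|\nabla U|/U\to 0$, enters to ensure that $V=e^{\alpha U+O(1)}$ is $\pi$-integrable for small $\alpha$ and that the various lower-order error terms along the way remain genuinely lower-order compared with $|\nabla U|$.
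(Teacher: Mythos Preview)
Your overall strategy---the Meyn--Tweedie drift criterion, leveraging the irreducibility, aperiodicity, and petiteness already in hand---is exactly the paper's. Two points, however, need correction or completion.

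First, the claim that GC\ref{ass:GC3} implies GC\ref{ass:GC2} is unjustified: $|\nabla U|\to\infty$ says nothing about the radial derivative of $U$ and does not force linear growth. The paper itself remarks that GC\ref{ass:GC3} ``does not seem to imply'' GC\ref{ass:GC2}. This is harmless, because the reachability and $T$-process machinery (Theorem~\ref{thm:reachability}, Theorem~\ref{thm:Tprocess}) only needs GC\ref{ass:GC1}, and $U\to\infty$ \emph{does} follow from GC\ref{ass:GC3} by combining both halves ($|\nabla U|\to\infty$ and $|\nabla U|/U\to 0$ together force $U\to\infty$). Similarly, your account of where the second half of GC\ref{ass:GC3} enters is off: $\pi$-integrability of $V$ is irrelevant for the drift criterion (it matters only for the CLT), and the spatial-derivative error terms are controlled by the \emph{first} half.

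Second, your Lyapunov candidate is genuinely different from the paper's and the key inequality is left open. The paper takes the \emph{unbounded}, componentwise correction
\[
  V(x,\theta)=\exp\Bigl(\alpha U(x)+\sum_{i=1}^d \phi(\theta_i\,\partial_i U(x))\Bigr),
  \qquad \phi(s)=\tfrac{1}{2}\operatorname{sign}(s)\ln(1+\delta|s|).
\]
The point of this choice is that $\exp(\phi(-s)-\phi(s))=(1+\delta|s|)^{-\operatorname{sign}(s)}$, so the computation of $LV/V$ decouples across coordinates: each $i$ contributes at most $-\kappa|\partial_i U|+C$, and summing gives $LV/V\le -\kappa\sum_i|\partial_i U|+dC+O(\|\Hess U\|)$, which tends to $-\infty$ under GC\ref{ass:GC3}. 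No cross-component inequality is needed. By contrast, your bounded $\Psi\approx\langle\theta,\nabla U\rangle/|\nabla U|$ couples the coordinates: the leading term in $LV/V$ becomes $|\nabla U|\cdot\bigl(\alpha\sum_i s_i-2\beta\sum_i (s_i)_+^2\bigr)$ with $s_i=\theta_i\partial_i U/|\nabla U|$, and you must show this bracket is uniformly negative on the sphere $\sum s_i^2=1$. That is a genuine (dimension-dependent) inequality which you acknowledge but do not prove; it can be pushed through with $\alpha$ small enough relative to $\beta$, but the paper's componentwise construction sidesteps the whole issue.
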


In particular, the Theorem~\ref{thm:exponential-ergodicity} allows for the case of canonical switching rates, i.e. $\gamma \equiv 0$.

\begin{remark}
Many target distributions which do not satisfy GC\ref{ass:GC3} can be transformed by a suitable change of variables after which GC\ref{ass:GC3} will be satisfied and exponential ergodicity can be obtained for the transformed distribution. The trajectories of the transformed process can then be used to compute ergodic averages approximating the intended target distribution. We refer to \cite{Deligiannidis2017, johnson2012variable} for details of this approach.
\end{remark}

\begin{remark}
Theorem~\ref{thm:exponential-ergodicity} establishes exponential ergodicity
under reasonable conditions (i.e. comparable to other sufficient conditions for
establishing exponential ergodicity of other processes
\cite{Deligiannidis2017,roberts1996quantitative,Stramer1999}) on the tails of the target
distribution. E.g. for potential functions of the form $U(x) = (1 + \|x\|^2)^{\alpha/2}$, Theorem~\ref{thm:exponential-ergodicity} establishes exponential ergodicity for any $\alpha > 1$.
For heavier tails, it is not yet clear what would be a suitable
Lyapunov function and this remains a topic of current research.
\end{remark}

\begin{remark}
Although GC\ref{ass:GC3} does not seem to imply GC\ref{ass:GC2}, it does imply non-evanescence through a Lyapunov argument \cite[Theorem 3.1]{MT3}.
\end{remark}

Under essentially the same conditions, we can also establish a Functional Central Limit Theorem. 
In the following theorem, we write $D[0,1]$ for the Skorohod space of cadlag functions on $[0,1]$.

\begin{theorem}[Functional Central Limit Theorem]
\label{thm:FCLT}
Suppose that $U \in \mathcal C^3$, $U$ has a nondegenerate local minimum, 
Growth Condition~\ref{ass:GC3} is satisfied, and $U$ satisfies the 
integrability condition  $\int_{\R^d} \exp(-\eta U(x)) \, d x < \infty$
for some $0 < \eta<1$.  Suppose the excess switching rates
$(\gamma_i)_{i=1}^d$ are bounded.

Let  $g : E \rightarrow \R$  satisfy $\abs{g(\cdot)} \leq k \exp(\beta U(\cdot))$ on $E$ for some $k > 0$ and $0\leq \beta < (1-\eta)/2$.   

Define $Z_n(t) := \frac 1 {\sqrt{n}} \int_0^{nt}
(g(X_s,\Theta_s) - \pi(g)) \, ds$, $t \geq 0$. 

There exists a $0 \leq \sigma_g < \infty$ such that
for any starting distribution,  
$Z_n$ converges in distribution in $D[0,1]$ to $\sigma_g B$, where 
$B$ is a standard brownian motion.
\end{theorem}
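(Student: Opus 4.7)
The plan is to invoke the classical invariance principle for $V$-uniformly exponentially ergodic Markov processes (see \cite{MT09}), using as Lyapunov function the same family that produces Theorem~\ref{thm:exponential-ergodicity}. The extra integrability assumption $\int \exp(-\eta U(x))\, dx < \infty$ with $\eta < 1$ is precisely what is needed to enlarge the range of allowable Lyapunov exponents so that both $g$ belongs to $L^2(\pi)$ and the Poisson-equation solution lies in a suitable weighted space.

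First, I would revisit the Lyapunov construction underlying Theorem~\ref{thm:exponential-ergodicity}. Under Growth Condition~\ref{ass:GC3} that proof produces a function of the form $V_\alpha(x,\theta) = \exp(\alpha \phi(x,\theta))$, where $\phi(x,\theta) = U(x) + \psi(x,\theta)$ for some bounded correction $\psi$ coupling position and velocity, and the drift inequality $\mathcal{L} V_\alpha \leq - c_\alpha V_\alpha + b_\alpha \1_C$ on a petite set $C$ holds for every sufficiently small $\alpha > 0$. The key observation is that GC\ref{ass:GC3} actually yields such a drift inequality for \emph{any} $\alpha \in (0,1)$: the dominant term in $\mathcal{L}\phi$ is proportional to $|\nabla U|$ and dominates the Hessian correction of order $\|\Hess U\|/|\nabla U|$, while exponentiation contributes a factor $\alpha$ that stays bounded on compact subintervals of $(0,1)$. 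This gives $V_\alpha$-uniform exponential ergodicity for every such $\alpha$.

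Second, I would pick $\alpha \in (2\beta, 1-\eta)$, a non-empty open interval by the hypothesis $\beta < (1-\eta)/2$. Two integrability checks follow immediately. On the one hand,
\[
  |g(x,\theta)|^2 \leq k^2 \exp(2\beta U(x)) \leq k^2 e^{\alpha \|\psi\|_\infty} V_\alpha(x,\theta),
\]
so that $g^2 \leq C V_\alpha$. On the other hand, $\pi(V_\alpha)$ is, up to a constant, equal to $\int \exp((\alpha - 1) U(x))\, dx$, which is finite since $\alpha - 1 < -\eta$ and $\int \exp(-\eta U(x))\, dx$ is finite by hypothesis. Combining these with $V_\alpha$-uniform exponential ergodicity, the continuous-time analogue of the Glynn--Meyn theory (see \cite{MT09}) gives that the Poisson equation $\mathcal{L}\hat g = -(g - \pi(g))$ admits a solution $\hat g = \int_0^\infty (P_t g - \pi(g))\, dt$ satisfying $|\hat g|^2 \leq C' V_\alpha$ and in particular $\hat g \in L^2(\pi)$.

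Third, I would derive the FCLT via the Dynkin martingale decomposition for the extended generator of the zigzag process:
\[
  \int_0^{t} (g(X_s,\Theta_s) - \pi(g))\, ds = \hat g(X_0,\Theta_0) - \hat g(X_t,\Theta_t) + M_t,
\]
where $M_t$ is a locally square-integrable martingale with predictable quadratic variation growing asymptotically linearly in $t$ at rate $\sigma_g^2 = -2\pi(\hat g (g - \pi(g))) \geq 0$. Rescaling $t \mapsto nt$ and dividing by $\sqrt{n}$, the boundary terms vanish in probability (uniformly in the starting distribution, thanks to the $V_\alpha$-integrability of $\hat g$ and the fact that $V_\alpha(X_{nt},\Theta_{nt})/n \to 0$ by stationarity arguments), and the martingale functional CLT applied to $M$ gives $Z_n \Rightarrow \sigma_g B$ in $D[0,1]$.

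The main obstacle is confirming that the Lyapunov calculation behind Theorem~\ref{thm:exponential-ergodicity} provides a drift inequality for the full range $\alpha \in (0,1-\eta)$, rather than only for the single exponent used in that theorem; this requires tracking how constants depend on $\alpha$ and verifying uniformity on $(0,1)$ under GC\ref{ass:GC3}. The remaining steps (non-explosivity, petiteness of compact sets, starting-distribution independence) are standard and already implicit in the proofs of Theorems~\ref{thm:ergodicity} and~\ref{thm:exponential-ergodicity}.
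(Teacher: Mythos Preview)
Your approach is essentially the paper's: choose the Lyapunov exponent $\alpha$ so that both $|g|\le cV$ and $\pi(V^2)<\infty$ hold (your parametrization $g^2\le CV_\alpha$, $\pi(V_\alpha)<\infty$ is the same thing after identifying $V_\alpha$ with the square of the paper's $V$), then invoke \cite[Theorem~4.3]{GlynnMeyn1996}, which the paper cites directly rather than unpacking via the Poisson equation and martingale FCLT as you do. One small correction: the velocity correction in the Lyapunov function~\eqref{eq:lyapunov} is \emph{not} bounded---it grows like $\sum_i \ln(1+\delta|\partial_i U|)$---but GC\ref{ass:GC3} guarantees this is $o(U)$, so your comparisons $V_\alpha\sim e^{\alpha U}$ and $g^2\le C V_\alpha$ still go through, and your worry about the drift inequality holding for all $\alpha\in(0,1)$ is already answered by Lemma~\ref{lem:lyapunov}, which allows any $\alpha\in(0,1)$ with $\delta$ chosen accordingly.
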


In particular, under the conditions of Theorem~\ref{thm:FCLT} the Central Limit
Theorem of ergodic averages holds:
\[ 
  \frac 1 {\sqrt{T}}\int_0^T (g(X_s,\Theta_s) - \pi(g)) \, ds 
  \stackrel{d}{\rightarrow} 
  N(0,\sigma_g^2) \quad \text{as $T \rightarrow \infty$.}
\]
\begin{remark} If $U$ grows faster than a positive power of $\abs{x}$, 
 then the integrability condition will be satisfied for $\eta$ arbitrarily small, 
and the CLT applies as soon as $\abs{g(\cdot)} \leq k \exp(\beta U)$ for 
some $\beta < 1/2$. In other words it applies for ``almost'' all functions 
$g\in L^2(\pi)$. 
\end{remark}

\begin{remark}
A CLT for the one-dimensional Zig-Zag process was obtained earlier in \cite{BierkensDuncan2016}.
\end{remark}

\subsection{Strategy}

\begin{figure}
\begin{tikzpicture}[align=center,node distance=10mm,%
  common/.style = {
    rounded rectangle, minimum size=10mm,
    thick,draw=black!70}, 
  condition/.style={
    common, top color=white,bottom color=black!20}, 
  deterministic/.style={
    common, top color=white, bottom color=red!20},
  probabilistic/.style={
    common, top color=white, bottom color = blue!20},
  ref/.style={
    midway,rectangle,fill=white,fill opacity=1,thick,draw=black!80}]
  \matrix[row sep=14mm,column sep=8mm]{
    & \node[condition] (uinf) { GC1: $U\to \infty$}; &[2em] &
    \node[condition] (ulog)  {GC2: $U\geq c\ln$} ; &
    &
    \\
    \node[condition] (locmin) {Local min.}; &  
    \node[deterministic] (eff) {Full flippability}; &
    \\
    &  \node[deterministic] (reach) {Reachability};  & & &
    \\
    \node[condition] (lyap)  {GC3} ;
    &\node[probabilistic] (irr) {$\psi$-irreducibility,\\ aperiodicity}; & 
    \node[probabilistic] (tproc) {$T$-process}; &
    \node[probabilistic] (sne) {Non evanescence}; &
    \\
    \node[probabilistic] (erg) {Exp. ergodicity};
    & &  \node[probabilistic] (harris) {Positive Harris\\ recurrence; ergodicity}; & & \\
   };
   \path[thick,->] (ulog) edge node[ref] {Prop. \ref{prop:SNE}} (sne) 
         (uinf) edge node[ref] {Prop. \ref{prop:full_flip}} (eff)
	 (eff) edge  (reach) 
	       edge[to path={-|(\tikztotarget) \tikztonodes}] node[ref] {Th. \ref{thm:Tprocess}} (tproc) 
	       (reach) edge node[ref] {Th. \ref{thm:Tprocess}} (irr)
	 (tproc) edge (harris)
	 (lyap) edge (erg);
   \draw[thick,->] (lyap) -- (erg);
   \draw[thick] (irr.290) -- ++(0,-0.5) node (tmp) {} -| (sne)
   (locmin.south) -- ++(0,-0.5) -| node[ref] {Th. \ref{thm:reachability}} (eff)
		(irr.250) -- ++(0,-0.5) -| node [ref] {Th.\ref{thm:exponential-ergodicity}} (lyap);
   \path (tmp) -| node[ref] {Th. \ref{thm:Tprocess}} (harris);
   
\end{tikzpicture}

{\small 
  Schematic overview of key properties of the zigzag process in relation to the
  Growth Conditions~\ref{ass:GC1},~\ref{ass:GC2} and~\ref{ass:GC3}. The grey
  nodes represent conditions on the potential $U$, the red nodes refer to
  deterministic `reachability' properties of trajectories, discussed in
  Section~\ref{sec:reachability}, and the blue nodes represent probabilistic
  properties discussed in Section~\ref{sec:probabilistic}.}
\caption{The key properties}
\label{fig:diagram}
\end{figure}

The diagram in Figure~\ref{fig:diagram} illustrates how the different Growth Conditions of Section~\ref{sec:mainresults} are related to key properties of the zigzag process, which are crucial to establish the main results.
As seen in the diagram, it is possible to distinguish between `deterministic' results and `probabilistic' results. 

The `deterministic' results, discussed in Section~\ref{sec:reachability} concern the control theoretic aspects of zigzag trajectories. Here we are concerned with \emph{reachability}: the existence of zigzag trajectories between any points in the state space such that, for a given potential function $U$, the trajectories are admissible: the switching intensities should be positive at the times at which the trajectory changes direction, even in the case of canonical switching rates. As a weaker notion, we are also interested in \emph{full flippability}: can we, starting from any point in the state space, be certain that eventually all components of the direction vectors are switched at least once? This will all be made more precise in Section~\ref{sec:reachability}.

Next, in the `probabilistic' section, Section~\ref{sec:probabilistic},  the
results of Section~\ref{sec:reachability} are employed in order to establish
several key properties ($\psi$-irreducibility, aperiodicity, the $T$-process
property, non-evanescence and (positive) Harris recurrence) of the zigzag
process as a Markov process, which finally result in proofs of the main
theorems. The definitions of these probabilistic notions, which are standard in
the Markov process literature \cite{MT2, MT09}, are recalled in the
introduction of Section~\ref{sec:probabilistic}.
We conclude with proofs of the main results, located in Section~\ref{sec:mainresults-proofs}.

\section{Reachability}
\label{sec:reachability}

\subsection{Admissible control sequences}
We define a \emph{control sequence} to
be a tuple $\mathbf u = (\mathbf t, \mathbf i)$, where $\mathbf t=(t_0,\dots,t_m) \in (0,\infty)^{m +
  1}$ and $\mathbf i = (i_1,\dots,i_m) \in \{ 1, \dots, n\}^m$ for some $m \in \N$. 
Starting from $(x,\theta)$ at time $0$, this sequence gives rise to 
a trajectory $(x(t),\theta(t))$ by: 
following $\theta$ for a time $t_0$, switching the $i_1$\textsuperscript{th}
component of $\theta$, following the new velocity for a time $t_1$, etc. 

More formally, writing $\tau_k = \sum_{i=0}^{k-1} t_i$ with the usual convention $\tau_0=0$, we define $(x(t),\theta(t))$ on $[0,\tau_{m+1}]$ by
 \begin{align*}
   \theta(t) &= F_{(i_1,\dots,i_k)} \theta, 
   \quad  \text{when  } \tau_k  \leq t < \tau_{k+1} \text{ for } k = 0, \dots,m, \\
   x(t) &= x+ \int_0^t \theta(s) ds. 
 \end{align*}
Here $F_{(i_1,\dots, i_k)} = F_{i_1} F_{i_2} \dots F_{i_k} \theta$, i.e. $F_I \theta$ flips all components of $\theta$ listed in the tuple $I = (i_1, \dots, i_k)$.
 This defines a piecewise constant trajectory $\theta(t)$ 
 such that at at time $\tau_k$, the $i_k$\textsuperscript{th} component
 of $\theta(t)$ changes sign. The final position $(x(\tau_{m+1}), \theta(\tau_{m+1}))$
 will be denoted by $\Phi_{\mathbf{u}}(x,\theta)$. 
 
The following definitions apply for switching intensities $\lambda_i(x,\theta)$ satisyfing~\eqref{eq:relation_lambda}.

 \begin{definition}[Flippability]
   A component $i$ of the velocity is \emph{flippable} at a point $(x,\theta)\in E$
   if the corresponding switching rate $\lambda_i(x,\theta)$ is strictly positive. 
   \end{definition}

 \begin{definition}[Admissible controls]
   Given a starting point $(x,\theta)$, a control sequence $(\mathbf{t}, \mathbf{i})$ is \emph{admissible}
   if $i_k$ is flippable at the point $(x(\tau_k), \theta(\tau_k))$, that is, if 
   \[ \forall k\in\{1,\dots,m\}, \quad \lambda_{i_k} (x(\tau_k), \theta(\tau_k)) > 0. \]
   \end{definition}
   \begin{definition}[Reachability]
     Given a starting point $(x,\theta)$ and an end point $(x',\theta')$, we 
    say that  $(x',\theta')$ is \emph{reachable} from $(x,\theta)$ and we write $(x,\theta)\leadsto(x',\theta')$
    if there exists an admissible control sequence 
     $\mathbf{u}=(\mathbf{t},\mathbf{i})$ such that  $\Phi_\mathbf{u}(x,\theta) = (x',\theta')$. 

     We write $(x,\theta)\fullyleadsto(x',\theta')$ if in addition,  
     every index in $\{1,...,d\}$ appears at least once in $\mathbf{i}$, that is, 
     all the components of the velocity are flipped at least once during the trajectory. 
   \end{definition}
   
 Our goal in this section is to prove that, under weak assumptions, 
 any point is reachable from any other point. 
  It is clear that if $(x,\theta)\leadsto (y,\eta)$ using the
  canonical, minimal switching rates $\lambda_i(x,\theta) = (\partial_i U(x) \theta_i)_+$, then the same is true for any choice of the switching rates.
  Consequently, we may and will assume in this section 
 that the $\lambda_i$ are the canonical switching rates.

\begin{remark}
\label{rem:admissible_on_open_set}
It follows immediately that if $(\mathbf t, \mathbf i)$ is an admissible
control sequence for some initial configuration,  then by continuity of $\lambda$ there exists an open environment $U$ of $\mathbf t \in (0,\infty)^{m+1}$ such that $(\tilde {\mathbf t}, \mathbf i)$ is admissible for
the same initial configuration, for any $\tilde {\mathbf t} \in U$. 
\end{remark}

\begin{remark}[Reachability is transitive]
  Given two control sequences $\mathbf{u}=(s_0,\dots,s_p ; i_1,\dots,i_p)$ 
  and $\mathbf{v} = (t_0,\dots,t_q ; j_1,\dots,j_q)$, we can concatenate them
  into 
  \[ \mathbf{w} = (s_0,\dots,s_{p-1},s_p + t_0,t_1,\dots,t_q ; i_1,\dots,i_p,j_1,\dots,j_q).\]
  If $\mathbf{u}$ is admissible starting from $(x,\theta)$ and $\mathbf{v}$ 
  is admissible starting from $\Phi_\mathbf{u}(x,\theta)$, then 
  $\mathbf{w}$ is admissible starting from $(x,\theta)$ and 
  $\Phi_\mathbf{w}(x,\theta) = \Phi_\mathbf{v}\circ \Phi_\mathbf{u}(x,\theta)$. 
\end{remark}

\begin{remark}[Time reversal]
  \label{rem:time_reversal}
  If $(x,\theta)\leadsto (x',\theta')$, then $(x',-\theta')\leadsto (x,-\theta)$: 
  indeed if $\lambda_i(x,\theta) > 0$ then 
  \[ \lambda_i(x,-F_i(\theta)) = (\theta_i \partial_i U(x))_+ = \lambda_i(x,\theta) > 0,\]
  so if $(t_0,\dots,t_m; i_1,\dots,i_m)$ is an admissible control 
  that sends $(x,\theta)$ to $(x',\theta')$, then the reversed 
 sequence  $(t_m,\dots,t_0 ; i_m,\dots,i_1)$ 
  is admissible and sends $(x',\theta')$ to $(x,\theta)$. (We thank the AE for pointing out that, without further conditions, this does not hold for non-canonical switching intensties.)
\end{remark}

 We will first establish reachability for the case where the potential
 $U$ is quadratic, so that the target measure is Gaussian. We will 
 use this in Section~\ref{sub:local_min} to see that around a local 
 minimum of the potential, we can reach any velocity. We will then 
 show that, under Growth Condition~\ref{ass:GC1}, starting from any point, 
 it is possible to switch all components of the velocity. All these 
 results will be put together in Section~\ref{sub:reachability_general} to 
 prove reachability in the general case.

\subsection{Reachability for multivariate normal distributions}
\begin{proposition}
  \label{prop:gaussian_reachability}
  Suppose that the target distribution is a nondegenerate Gaussian
  $U(x) = \scal{x,Ax}$, where $A$ is a positive definite symmetric matrix. Then 
  for any $(x,\theta)$, $(x',\theta')$, $(x,\theta)\leadsto(x',\theta')$. 
\end{proposition}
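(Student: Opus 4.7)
My plan is to exploit the explicit affine form of the switching rates for a quadratic potential. For $U(x) = \langle x, Ax\rangle$, along any linear segment the canonical rate is
\[
\lambda_i(x + t\theta, \theta) = \bigl(2\theta_i(Ax)_i + 2t\,\theta_i(A\theta)_i\bigr)_+,
\]
the nonnegative part of an affine function of $t$. Positive definiteness of $A$ forces $\sum_i \theta_i(A\theta)_i = \theta^\top A\theta > 0$, so at least one coordinate $i^\star$ satisfies $\theta_{i^\star}(A\theta)_{i^\star} > 0$; moving long enough in direction $\theta$ eventually makes that coordinate flippable. Thus from any state, after a sufficiently long linear segment at least one flip is available.

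The key geometric observation is the existence of \emph{hub} points at which every coordinate is simultaneously flippable. Taking $x^\star(\theta) := A^{-1}\theta$ gives $(Ax^\star(\theta))_i = \theta_i$, and hence $\lambda_i(x^\star(\theta),\theta) = (2\theta_i^2)_+ = 2$ for every $i$. By continuity of the rates the same holds on an open neighborhood of $(x^\star(\theta),\theta)$, so within such a neighborhood one may execute any short sequence of flips $F_{j_1}\cdots F_{j_k}$ and reach every target velocity in $\{-1,1\}^d$ at a position close to $x^\star(\theta)$.

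Putting these ingredients together, I would proceed in three stages. First, from $(x,\theta)$, concatenate long linear segments with admissible terminal flips, iteratively altering direction, so as to steer the trajectory into an open hub neighborhood of some $(A^{-1}\theta',\theta')$. Second, at the hub, use the all-flippable property to reconfigure the velocity to any desired intermediate value. Third, apply the mirror construction starting from $(x',-\theta')$ using the time-reversal symmetry of Remark~\ref{rem:time_reversal}, and splice the forward and reversed trajectories at a common hub state; the residual positional discrepancy is absorbed by exploiting that $\{-1,1\}^d$ spans $\R^d$, so any displacement can be realized as a positive combination of hypercube directions with carefully chosen segment lengths (using also Remark~\ref{rem:admissible_on_open_set} to ensure that small time perturbations preserve admissibility).

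The main obstacle is the first stage: rigorously driving an arbitrary initial state into a hub neighborhood by admissible controls. The difficulty is combinatorial, since after each flip the set $\{i : \theta_i(A\theta)_i > 0\}$ of eventually-flippable coordinates changes and is not obviously matched to the target. I expect the detailed argument to track a geometric quantity (for instance the sign pattern of $Ax$ relative to the current $\theta$, or a distance to the hub locus $\{A^{-1}\theta : \theta \in \{-1,1\}^d\}$) and to verify that a carefully chosen sequence of admissible flips brings this quantity into a prescribed state in finitely many steps, leveraging the positive definiteness of $A$ in an essential way.
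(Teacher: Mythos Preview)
Your proposal has a genuine gap, and you correctly identify where it is: stage~1. The hub observation $x^\star(\theta)=A^{-1}\theta$ is valid, but reaching a prescribed \emph{position} with a prescribed velocity is exactly the reachability problem you are trying to prove, so invoking hubs is circular until you have some form of position control. You defer this to an unspecified monovariant, which is the heart of the matter.

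The paper sidesteps this circularity by decoupling velocity from position. It works not with hubs but with \emph{asymptotically flippable} velocities: those $\eta$ with $\eta_i(A\eta)_i>0$ for every $i$. For such $\eta$, from \emph{any} position $x$, moving in direction $\eta$ for long enough makes every coordinate flippable (since $\eta_i(A(x+t\eta))_i \to +\infty$). So once you hold an asymptotically flippable velocity, you have a ``moving hub'' with no position constraint, and the paper shows (Lemma~\ref{lem:asymptotically_flippable:positions}) how to go from $(x,\eta)$ to any $(x',-\eta)$ by an explicit construction: go far out in direction $\eta$, flip the components in an order determined by the coordinate differences $(x'_i-x_i)/\eta_i$, and come back.

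The missing monovariant you anticipate is supplied by Lemma~\ref{lem:all_roads}: write $A_{ij}=\langle v_i,v_j\rangle$ via a Gramian factorization and set $v(\theta)=\sum_i\theta_i v_i$, so that $i$ is asymptotically flippable iff $\langle \theta_i v_i, v(\theta)\rangle>0$. Flipping exactly the asymptotically flippable components strictly increases $|v(\theta)|^2$ (a two-line computation using positive definiteness). Since $\{-1,1\}^d$ is finite, iterating this terminates at an asymptotically flippable velocity. This monovariant depends only on $\theta$, not on $x$, which is why it escapes the circularity your hub approach runs into. Combined with time reversal (your stage~3 idea), this yields the full statement.

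Your stage~3 sketch (``absorb the residual displacement by positive combinations of hypercube directions'') is also not free: an arbitrary displacement is not automatically a sum of \emph{admissible} segments with positive durations, and the paper's Lemma~\ref{lem:asymptotically_flippable:positions} gives the precise ordering-of-flips construction that makes this work.
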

Even for this simple case, the fact that the jump rates may be zero and that the process may
be unable to jump for long stretches makes the proof quite involved. The main idea is 
to use the fact that by going in a straight line for a sufficiently long time, the 
process will always reach a region where it can switch some components of its velocity. 
Let us first define a useful notational shortcut. 
\begin{definition}
  [Reachability for velocities]
  For any two velocities $\theta$, $\theta'$, we say that $\theta'$ 
  is \emph{reachable} from $\theta$ and we write  $\theta\leadsto \theta'$
  if for any $x$, there exists an $x'$ such that $(x,\theta) \leadsto (x',\theta')$. 
\end{definition}
\begin{definition}[Asymptotic flippability]
  Let $\theta\in\{-1,1\}^d$. If $\sum_j \theta_i A_{ij} \theta_j>0$ we say that the $i$\textsuperscript{th}
  component of $\theta$ is \emph{asymptotically flippable}. 
  The velocity $\theta$ itself is called asymptotically flippable 
  if all its components are asymptotically flippable. 
\end{definition}
The above definition is explained by noting that in case of asymptotic flippability of the $i$-th component, along any trajectory $x + \theta t$ the $i$-th switching intensity will eventually become positive.

\begin{lemma}
  \label{lem:asymptotically_flippable:velocities}
  If $I$ is a sequence of asymptotically flippable components for $\theta$, then $\theta \leadsto F_I(\theta)$. 
  In particular, if $\eta$ is asymptotically flippable, then for any  $\theta$, $\eta\leadsto \theta$. 
\end{lemma}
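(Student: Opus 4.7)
The plan is to prove the implication by constructing, for the distinct indices appearing in $I$, an admissible control that performs all the required flips in a small neighborhood of a single ``far-away'' point on the straight line issued from $(x,\theta)$. Since $F_I\theta$ depends only on the parity of occurrences of each index in $I$, let $I' \subseteq \{1,\dots,d\}$ be the set of indices appearing an odd number of times in $I$; every element of $I'$ is by assumption asymptotically flippable at $\theta$, and $F_{I'}\theta = F_I\theta$, so it suffices to produce an admissible sequence that flips the (distinct) indices in $I'$. For the Gaussian potential $U(x) = \scal{x,Ax}$ one has $\partial_i U(y) = 2(Ay)_i$, hence along the line $y(t) = x + \theta t$,
\[
  \theta_i\,\partial_i U(y(t)) \;=\; 2\theta_i(Ax)_i + 2t\sum_j\theta_i A_{ij}\theta_j.
\]
For $i$ asymptotically flippable the coefficient of $t$ is positive, so the right-hand side eventually becomes (and stays) positive; choosing $t_0$ large enough makes $\lambda_i(x+\theta t_0,\theta) > 0$ simultaneously for every $i\in I'$.

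Next, enumerate $I' = \{i_1, \dots, i_m\}$ in an arbitrary order and consider the candidate control $(t_0, \epsilon_1, \dots, \epsilon_{m-1}, 1;\, i_1, \dots, i_m)$ with $\epsilon_1, \dots, \epsilon_{m-1} > 0$ to be fixed. The key observation is that the canonical rate $\lambda_i(y,\eta) = (\eta_i\,\partial_i U(y))_+$ depends on $\eta$ only through its $i$-th coordinate; since $i_k \notin \{i_1, \dots, i_{k-1}\}$, the velocity $\eta^{(k)} = F_{i_1}\cdots F_{i_{k-1}}\theta$ active just before the $k$-th flip satisfies $\eta^{(k)}_{i_k} = \theta_{i_k}$, so the admissibility condition $\lambda_{i_k}(y^{(k)},\eta^{(k)}) > 0$ reduces to $(\theta_{i_k}\,\partial_{i_k} U(y^{(k)}))_+ > 0$, where $y^{(k)} = x + \theta t_0 + \sum_{l<k} \eta^{(l+1)} \epsilon_l$. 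At $\epsilon_1 = \dots = \epsilon_{k-1} = 0$, $y^{(k)}$ collapses to $x + \theta t_0$ and the inequality holds by the choice of $t_0$. By continuity of $\nabla U$, the finitely many conditions ``$(\theta_{i_k}\,\partial_{i_k} U(y^{(k)}))_+ > 0$'' remain valid provided the $\epsilon_l$ are small enough, and choosing them this way produces an admissible control whose endpoint has velocity $F_{i_m}\cdots F_{i_1}\theta = F_{I'}\theta = F_I\theta$. This proves $\theta\leadsto F_I\theta$.

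For the ``in particular'' statement, given $\eta$ asymptotically flippable and $\theta$ arbitrary, take $I = \{i : \eta_i \neq \theta_i\}$; every element of $I$ is asymptotically flippable for $\eta$, and $F_I\eta = \theta$, so the main statement yields $\eta \leadsto \theta$. The subtle point to watch out for, which this construction deliberately side-steps, is that asymptotic flippability is \emph{not} preserved when one flips other components of $\theta$; a naive induction alternating ``flip one component, then travel far'' would therefore not close. Concentrating all the flips inside a tiny window around the single far-away point $x + \theta t_0$ exploits that $\lambda_{i_k}$ is insensitive to the components of the velocity other than the $i_k$-th, and turns what looks like a sequential obstacle into a simultaneous open condition.
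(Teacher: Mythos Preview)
Your proposal is correct and follows essentially the same approach as the paper: travel far along $\theta$ until all the required canonical rates are simultaneously positive, then perturb the degenerate ``all-switches-at-once'' pseudo-control into a genuine admissible control by continuity. You are more explicit than the paper on two points---the reduction from an arbitrary sequence $I$ to the set $I'$ of odd-parity indices, and the reason the perturbation preserves admissibility (namely that $\lambda_{i_k}$ depends on the velocity only through its $i_k$-th component, which has not yet been flipped)---but the underlying idea is identical.
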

\begin{proof}
  Write $I = \{i_1, \dots, i_m\}$. Starting from $x$ with 
  velocity $\theta$, after a large time $t$ the components of $A(x+t\theta)$ 
  will have the signs of the components of $A\theta$, so the 
  $i$\textsuperscript{th} component for $i\in I$ will all be flippable. 
  The ``pseudo''-control sequence $(t,0,\dots, 0 ; i_1,\dots, i_m)$, would therefore
  bring $(x,\theta)$ to $(x',F_I\theta)$ for some $x'$. It is strictly speaking not a control sequence since its times between switches are zero. However since the positivity of the 
  jump rates is an open condition and the map $\mathbf{t}\mapsto \Phi_{(\mathbf{t},\mathbf{i})}(x,\theta)$
  is continuous, this implies the existence of a $\mathbf{t'}$ with positive 
  coefficients such that $(\mathbf{t'} ; i_1,\dots,i_m)$ is admissible starting from $(x,\theta)$, 
  proving that $\theta\leadsto F_I(\theta)$. 
\end{proof}
  
The usefulness of this definition is readily seen through the following result. 
  
\begin{lemma}
  [Reachability for asymptotically flippable velocities]
  \label{lem:asymptotically_flippable:positions}
  If $\eta$ is asymptotically flippable, then for any $x$ and $x'$, $(x,\eta)\leadsto (x',-\eta)$.  
\end{lemma}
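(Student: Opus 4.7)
The plan is to apply Lemma~\ref{lem:asymptotically_flippable:velocities} multiple times and concatenate the resulting pseudo-controls, exploiting the openness of admissibility (Remark~\ref{rem:admissible_on_open_set}) and the freedom in the time parameters to control the final position.

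One application of Lemma~\ref{lem:asymptotically_flippable:velocities} with flip sequence $I=(1,\dots,d)$ yields an admissible pseudo-control of the form $(t,0,\dots,0;1,\dots,d)$ from $(x,\eta)$ for large $t$. Perturbing the times to $(t_0,t_1,\dots,t_d)$ with $t_0$ large and $t_1,\dots,t_d\in(0,\delta)$ preserves admissibility and produces displacement $\sum_{k=0}^d t_k v_k$, where $v_k:=F_{(1,\dots,k)}\eta$ satisfies $v_0=\eta$, $v_d=-\eta$ and $v_k-v_{k-1}=-2\eta_k e_k$; in particular $v_1,\dots,v_d$ form a basis of $\R^d$. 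The asymptotic flippability condition $\sum_j\theta_i A_{ij}\theta_j>0$ is invariant under $\theta\mapsto-\theta$, so $-\eta$ is also asymptotically flippable, and the analogous construction produces admissible pseudo-controls from $-\eta$ back to $\eta$.

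I would then concatenate $2N+1$ such one-way trips, alternating between $\eta\to-\eta$ and $-\eta\to\eta$, so that the total trajectory sends $\eta$ to $-\eta$. The total displacement takes the form $\sum_{k=0}^d c_k v_k$, where $c_k=\sum_{j=0}^{2N}(-1)^j t_k^{(j)}$ is a signed sum of the time parameters. Admissibility requires $t_0^{(j)}$ large and $t_k^{(j)}\in(0,\delta_j)$ for $k\geq 1$, hence $c_0$ can take any real value by balancing the large waiting times, while each $c_k$ with $k\geq 1$ covers any real value once $N$ is sufficiently large (the achievable range grows like $N\min_j\delta_j$). Given the target $x'$, we expand $x'-x=\sum_{k=1}^d\tilde c_k v_k$ in the basis and choose time parameters realising $c_k=\tilde c_k$ for $k\geq 1$ and $c_0=0$, producing the desired admissible trajectory.

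The main technical obstacle is maintaining admissibility uniformly across all $2N+1$ trips, since the threshold on $t_0^{(j)}$ and the size $\delta_j$ of the admissibility window both depend on the (drifting) starting position of trip $j$. This is resolved by noting that in the Gaussian case the threshold grows only linearly with the distance of the current starting point to the origin, while the switching rates at the flip points grow correspondingly (the dominant term at the flip of component $k$ in trip $j$ being of order $t_0^{(j)}(A\eta)_k$), so the $\delta_j$ stay bounded below by a positive constant and the construction can be carried out consistently.
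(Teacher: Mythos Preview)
Your approach is genuinely different from the paper's. The paper uses a \emph{single} out-and-back trip: it defines $d_i=(x'_i-x_i)/\eta_i$, sorts these values to fix the flip order $\sigma$, and then writes down the inter-flip times $t_i=(d_{\sigma(i+1)}-d_{\sigma(i)})/2$ explicitly so that the endpoint is exactly $x'$. Admissibility is then checked in one stroke by taking the initial leg (and symmetrically the final leg) sufficiently long. The reordering is what guarantees all inter-flip times are positive, so no iteration or accumulation of trips is needed.

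Your multi-trip scheme can in principle be made to work, but the last paragraph does not actually close the argument. The admissibility threshold for $t_0^{(j)}$ depends on the starting position $y_j$ of trip $j$, which in turn depends on \emph{all} the parameters $t_k^{(l)}$ with $l<j$ (not just the $t_0^{(l)}$). So the claims ``$c_0$ can take any real value by balancing the large waiting times'' and ``the $\delta_j$ stay bounded below'' are coupled: perturbing the $t_0^{(l)}$ to tune $c_0$ moves $y_j$, which shifts both the threshold on $t_0^{(j)}$ and the margin that determines $\delta_j$; choosing large $t_k^{(j)}$ (which you need if $\delta$ is to be of order $t_0^{(j)}$) does the same. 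For example, with all $t_0^{(j)}=T$ the second trip is already inadmissible (the partial sum $(-1)^jc_0^{(j)}$ vanishes), and the obvious fix $t_0^{(0)}=T$, $t_0^{(j)}=2T$ forces $c_0=T$, not $0$, which you then have to absorb into $c_d$ under further constraints. None of this is fatal, but you would need to exhibit a concrete scheme for the $t_0^{(j)}$'s (and bound the drift from the $t_k^{(j)}$'s with $k\geq 1$) and verify the three requirements simultaneously; as written this is asserted rather than proved. The paper's single-trip construction sidesteps the whole issue.
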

Before proving this lemma, let us give a simple case where
it   is enough to conclude the argument. 
\begin{corollary}
  If $A$ is diagonally dominant, then every $\theta$ is asymptotically flippable,
  and $(x,\theta)\leadsto(x',\theta')$ for all pairs of states. 
\end{corollary}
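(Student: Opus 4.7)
The plan is to verify the two claims separately: a short direct computation for asymptotic flippability, followed by a two-step concatenation of Lemmas~\ref{lem:asymptotically_flippable:velocities} and~\ref{lem:asymptotically_flippable:positions}.

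First I would check asymptotic flippability. Fix $i\in\{1,\dots,d\}$ and $\theta\in\{-1,1\}^d$ and compute
\[ \sum_j \theta_i A_{ij}\theta_j \;=\; A_{ii} + \sum_{j\neq i} A_{ij}\theta_i\theta_j \;\geq\; A_{ii} - \sum_{j\neq i} |A_{ij}| \;>\; 0, \]
using $\theta_i^2=1$ and $\theta_i\theta_j\in\{-1,1\}$ for the middle inequality, and the strict diagonal dominance of $A$ (which must be what is intended here, since weak dominance would only yield $\geq 0$) for the final one. Since $i$ and $\theta$ were arbitrary, every component of every $\theta$ is asymptotically flippable.

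For the second claim, fix initial state $(x,\theta)$ and target state $(x',\theta')$. By the ``in particular'' part of Lemma~\ref{lem:asymptotically_flippable:velocities}, since $\theta$ is asymptotically flippable one has $\theta\leadsto -\theta'$, so there is some $y\in\dR^d$ with $(x,\theta)\leadsto (y,-\theta')$. Because $-\theta'$ is itself asymptotically flippable, Lemma~\ref{lem:asymptotically_flippable:positions} applies and yields $(y,-\theta')\leadsto (x',\theta')$ for the specific target position $x'$. Transitivity of reachability then gives $(x,\theta)\leadsto(x',\theta')$, as required.

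The only substantive ingredient is the diagonal-dominance computation guaranteeing asymptotic flippability of every $\theta$; the rest is a mechanical assembly of the two preceding lemmas, so I do not expect any real obstacle for this corollary. The true difficulty lies in the general case of Proposition~\ref{prop:gaussian_reachability}, where $A$ may fail to be diagonally dominant and certain velocities are not asymptotically flippable, so that one must reach them indirectly through carefully chosen intermediate configurations rather than in a single application of Lemma~\ref{lem:asymptotically_flippable:velocities}.
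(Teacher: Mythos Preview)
Your proof is correct and matches the paper's argument essentially line for line: the same diagonal-dominance inequality for asymptotic flippability, then Lemma~\ref{lem:asymptotically_flippable:velocities} to reach an intermediate point with velocity $-\theta'$, followed by Lemma~\ref{lem:asymptotically_flippable:positions} and transitivity. Your remark that strict diagonal dominance is what is needed is also accurate.
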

\begin{proof}If $A$ is diagonally dominant then 
  $\sum_j \theta_i A_{ij} \theta_j  \geq A_{ii} - \sum_{j, j\neq i} \abs{A_{ij}} >0$
  so all velocities are asymptotically flippable. 
  Given $(x,\theta)$ and $(x',\theta')$, we first use Lemma~\ref{lem:asymptotically_flippable:velocities}
   to get the existence of $x''$ such that $(x,\theta)\leadsto (x'',-\theta')$.
  By Lemma~\ref{lem:asymptotically_flippable:positions} we can then reach $(x',\theta')$ from $(x'',-\theta')$, and we are done by transitivity. 
\end{proof}
\begin{proof}[Proof of Lemma~\ref{lem:asymptotically_flippable:positions}]
  Let $\eta$ be an asymptotically flippable velocity, and $x$, $x'$ be two arbitrary
  positions. 
  To control the system from $x$ to $x'$, the idea is to go very far in the direction 
  of $\eta$, to a region where all components of $\eta$ are flippable, to flip them 
  in a well chosen order and with well chosen time intervals between flips, so that
  when the last component is flipped, the system reaches $x'$ after a long run 
  in the direction $-\eta$.

  To do this rigorously, define $d_i = (x'_i - x_i)/\eta_i$, and suppose 
  first that the $d_i$ are increasing: 
  $d_1 < \dots < d_n$. For $1\leq i \leq n-1$, 
  let $t_i = (d_{i+1} - d_{i})/2$, and choose $t_0$ and $t_n$ positive 
  numbers such that $t_0-t_n = \frac{d_1+d_n}{2}$. 

  Now let $t$ be a large time to be chosen later, and consider the control 
  \[
    (\mathbf{t},\mathbf{i}) = (t+t_0,t_1,\dots,t_{n-1},t_n+t ; 1,2,\dots, n).
  \] 
  Starting from $(x,\eta)$, the $i$\textsuperscript{th} component of 
  the position will follow $\eta_i$ for a time $t+t_0+\cdots+ t_{i-1}$, 
  and $-\eta_i$ for the remaining time $t_i + \cdots + t_n +t$. Therefore, 
  the $i$\textsuperscript{th} component of the final position 
  is 
  \begin{align*}
    &x_i + \eta_i(t+\sum_{j=0}^{i-1} t_j) - \eta_i (t + \sum_{j=i}^n t_j) \\
    &\quad = x_i + \eta_i (t_0 - t_n + \frac{1}{2}\sum_{j=1}^{i-1} (d_{j+1} - d_j)
                         - \frac{1}{2}\sum_{j=i}^{n-1} (d_{j+1} - d_j)) \\
    &\quad = x_i + \frac{\eta_i}{2}\PAR{ d_1 + d_n + d_i -d_1 - d_n + d_i} \\
    &\quad = x_i  + x'_i - x_i = x'_i. 
  \end{align*}
  If the $d_i$ are not increasing but all distinct, we can reorder them by
  finding a permutation $\sigma$ such that the $d_{\sigma(i)}$ increase, and
  perform the same argument using the control sequence
  $(t+t_0,t_1,\dots,t_{n-1},t_n+T ; \sigma(1), \dots, \sigma(n))$ where $t_i =
  (d_{\sigma(i+1)} - d_{\sigma(i)})$. 

  It remains to check that all the moves are admissible. By a computation
  similar to the one just above, the position $x^{(i)}$ just before the $i$\textsuperscript{th} 
  flip in the control sequence is given by: 
  \[ x^{(i)}_j = x_j + \eta_j(t+ \sum_{k=0}^{i-1} (\ind{k\leq j} - \ind{k>j}) t_k).\]
  Once the $t_k$ are fixed (by the given input of the starting and ending positions $x$ and $x'$), 
  one can always take $t$ large enough so that $(Ax^{(i)})_i$ has the sign of $\eta_i$, which 
  implies that the $i$\textsuperscript{th} jump is indeed admissible. 

  Finally, if some of the $d_i$ are equal, we may always introduce 
  intermediary points $y$ and $y'$ such that the differences $(y_i-x_i)/\eta_i$ 
  are distinct for all $i$, and likewise the differences $(y'_i-y_i)/(-\eta_i)$, 
  and $(x'_i - y_i)/\eta_i$. Therefore $(x,\eta)\leadsto (y,-\eta)\leadsto(y',\eta)
  \leadsto(x',-\eta)$, and we are done by transitivity. 
\end{proof}

We now tackle the general case, when  $A$ is not diagonally dominant. 
\begin{lemma}[All roads lead to an asymptotically flippable velocity]
  \label{lem:all_roads}
  For all $\theta$ there exists an asymptotically flippable velocity $\eta$ such that
  $\theta\leadsto \eta$. 
\end{lemma}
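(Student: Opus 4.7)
The plan is to use the quadratic form $f(\theta) := \langle \theta, A\theta\rangle$ as a Lyapunov-type function on the finite set $\{-1,1\}^d$, combined with Lemma~\ref{lem:asymptotically_flippable:velocities}, which lets us flip any subset of asymptotically flippable components in one admissible move.

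First I fix $\theta$, let $R(\theta) := \{\eta \in \{-1,1\}^d : \theta\leadsto \eta\}$, and note that by transitivity, $R(\theta)$ is closed under the operation $\eta \mapsto F_J\eta$ whenever $J$ consists of asymptotically flippable components for $\eta$. Since $R(\theta)$ is finite and nonempty, pick $\eta^* \in R(\theta)$ maximizing $f$; I will show $\eta^*$ is asymptotically flippable, which concludes the argument.

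Suppose for contradiction that $\eta^*$ is not asymptotically flippable, and set $I := \{i : \eta^*_i(A\eta^*)_i > 0\}$. Positive definiteness of $A$ gives $f(\eta^*) = \sum_i \eta^*_i(A\eta^*)_i > 0$, so at least one term is positive and $I \neq \emptyset$; by the contradiction hypothesis $I^c \neq \emptyset$ as well. Applying Lemma~\ref{lem:asymptotically_flippable:velocities} to $\eta^*$ and the set $I$ yields $F_I\eta^* \in R(\eta^*) \subseteq R(\theta)$. Expanding $f$ using $(F_I\eta^*)_i = -\eta^*_i$ for $i\in I$ and $(F_I\eta^*)_i = \eta^*_i$ otherwise, and invoking the symmetry of $A$, one obtains
\[
f(F_I\eta^*) - f(\eta^*) \;=\; -4\sum_{i\in I,\ j\in I^c} \eta^*_i A_{ij}\eta^*_j.
\]
Denote this sum by $X$. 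Splitting $\sum_{j\in I^c}\eta^*_j(A\eta^*)_j$ into contributions from $k \in I^c$ and $k \in I$ and using the symmetry of $A$ again gives
\[
\sum_{j\in I^c}\eta^*_j(A\eta^*)_j \;=\; \langle \eta^*_{I^c}, A_{I^cI^c}\eta^*_{I^c}\rangle + X,
\]
where $A_{I^cI^c}$ is the principal submatrix of $A$ indexed by $I^c$. The left-hand side is $\leq 0$ by the definition of $I$; the quadratic term on the right is strictly positive since $A_{I^cI^c}$ is a positive definite principal submatrix of $A$ and $\eta^*_{I^c}$ has all entries equal to $\pm 1$. Therefore $X < 0$, so $f(F_I\eta^*) > f(\eta^*)$, contradicting the maximality of $\eta^*$. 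We conclude that $\eta^*$ is asymptotically flippable and $\theta \leadsto \eta^*$.

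The main subtlety is selecting the admissible move that strictly increases $f$: flipping a single asymptotically flippable component $i$ changes $f$ by $4(A_{ii} - \eta^*_i(A\eta^*)_i)$, which can have either sign, so a greedy one-component argument fails. One genuinely needs to flip \emph{all} asymptotically flippable components at once, an operation made possible by Lemma~\ref{lem:asymptotically_flippable:velocities}; the positive definiteness of $A$ (via the principal submatrix $A_{I^cI^c}$) is what forces the off-block cross term $X$ to be strictly negative.
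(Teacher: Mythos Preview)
Your proof is correct and follows essentially the same approach as the paper: both use the quadratic form $\langle\theta,A\theta\rangle$ as a strictly increasing quantity under the move ``flip all asymptotically flippable components at once,'' with the strict increase forced by positive definiteness. The paper phrases this via a Gramian representation $A_{ij}=\langle v_i,v_j\rangle$ (so that $\langle\theta,A\theta\rangle=|v(\theta)|^2$) and an iterative algorithm rather than a maximality argument, but the key inequality and its justification are identical to yours.
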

\begin{proof}
  To prove this result, it is useful to represent the matrix $A$ as a Gramian matrix: 
  as can be seen by an $LL^\top$ or a symmetric square root representation, there exists
  a family of vectors $(v_1,\dots,v_n)$ such that $A_{ij} = \scal{v_i,v_j}$. For a velocity
  $\theta$, let $v(\theta) = \sum_i \theta_i v_i$. Using this representation, 
  we have the equivalence:
  \begin{align*}
    i \text{ is asymptotically flippable for $\theta$} 
    &\iff (A\theta)_i\theta_i >0 \\
    &\iff \scal{\theta_i v_i, v(\theta)} >0. 
  \end{align*}
  Let $\theta$ be an arbitrary velocity, and suppose that $\theta$ is not asymptotically flippable. 
  Denote by $I$ the subset of asymptotically flippable indices: 
  \[ i\in I \iff \scal{\theta_i v_i, v(\theta)}>0. \]
  Since $\sum_i \scal{\theta_i v_i,v(\theta)} = \abs{v(\theta)}^2 >0$ by positive definiteness of $A$, this set
  is non empty; by hypothesis it is not equal to $\{1,\dots,n\}$. 
  Let $F_I(\theta)$ be the velocity obtained by flipping all asymptotically flippable components. 
  The key point is that this flip increases the norm of $v$: 
  \[ \abs{v(F_I(\theta))} > \abs{v(\theta)}.\]
  Indeed, let $v_+ = \sum_{i\in I} \theta_i v_i$ and $v_- = \sum_{i\notin I} \theta_i v_i$. 
  Since $v(\theta) = v_+ + v_-$ and $v(F_I \theta) = v_- - v_+$, 
  \[
    \abs{v(F_I \theta)}^2 - \abs{v(\theta)}^2 = -4 \scal{v_-,v_+}.
  \]
  Now $\scal{v(\theta),v_-}$ must be non-positive by definition of $v_-$ and the set $I$, 
  but this is $\abs{v_-}^2 + \scal{v_-,v_+}$. The scalar product $\scal{v_-,v_+}$ 
  is therefore negative, and 
  \[ 
    \abs{v(F_I \theta)} >  \abs{v(\theta)}. 
  \]
  
  Now starting from $\theta$, apply the following `algorithm':
\begin{itemize}
 \item if $\theta$ is asymptotically flippable, stop.
 \item if it is not, move to $F_I \theta$ where $I$ is the set of asymptotically flippable indices.
\end{itemize}

The fact that $\theta$ is not asymptotically flippable implies that $v_-$ cannot be zero
(because $I\neq \{1,\dots,d\}$ and the $v_i$ are linearly independent because
$A$ is positive definite), so the norm will increase. Since along the
algorithm, $|v(\theta)|$ is strictly increasing, it must stop at one time; at
this time it has (by definition) reached an asymptotically flippable velocity.
\end{proof}

Now we have all the ingredients to prove the full reachability in the Gaussian case. 

\begin{proof}[Proof of Proposition~\ref{prop:gaussian_reachability}]
  Let $(x,\theta)$ and $(x',\theta')$ be two points. 
  By Lemma~\ref{lem:all_roads}, there exists an asymptotically flippable velocity $\eta'$ and a point $y'$
  such that $(x',-\theta')\leadsto (y',\eta')$. By the time-reversal property of Remark~\ref{rem:time_reversal},
  $(y',-\eta')\leadsto (x',\theta')$. 
  Now by Lemma~\ref{lem:all_roads} again, we get the existence of an
  asymptotically flippable velocity $\eta$ and a point $y$ such that
  $(x,\theta)\leadsto(y,\eta)$. 
  Lemma~\ref{lem:asymptotically_flippable:velocities} gives us a point $z$ such that
  $(y,\eta) \leadsto (z,\eta')$, and Lemma~\ref{lem:asymptotically_flippable:positions}
  tells us that $(z,\eta') \leadsto (y',-\eta')$, which finishes the
  construction of an admissible trajectory.
\end{proof}

\subsection{Reachability around a local minimum}
\label{sub:local_min}

As before $U : \R^d \rightarrow \R$ is the potential function of a probability
density function $\overline \pi$, i.e.  $\overline \pi(x) \propto \exp(-U(x))$. We suppose
that $U$ has at least one nondegenerate local minimum, which we assume without
loss of generality to be located in $x = 0$, i.e. $\nabla U(0) = 0$ and $V :=
H_U(0)$ is positive definite. We will use the fact that all points in $\R^d$
are reachable through zigzag trajectories for the Gaussian density
$\pi^V \propto \exp(-\tfrac 1 2 x^T V x)$, to conclude that the same holds in a
neighbourhood of 0 for the potential $U$.

\begin{lemma}
\label{lem:nondegeneratelocalminimum}
Suppose $U \in \mathcal C^3(\R^d)$, $\nabla U(0) = 0$ and $H_U(0)$ is positive
definite. There exists a radius $\gamma > 0$  such that 
$(x,\theta) \fullyleadsto (y,\eta)$ for every $(x,\theta)$ and $(y,\eta)$ satisfying $|x| < \gamma$, $|y| < \gamma$.
\end{lemma}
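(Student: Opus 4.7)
The plan is to transfer the Gaussian reachability of Proposition~\ref{prop:gaussian_reachability} to a neighborhood of $0$ via a scaling and perturbation argument, exploiting that near the minimum $\nabla U$ is close to the linear field $x\mapsto Vx$ with $V:=H_U(0)\succ 0$. Introduce the rescaled potentials $U_\epsilon(x):=\epsilon^{-2}U(\epsilon x)$ for $\epsilon>0$; since $\nabla U(0)=0$ and $U\in\mathcal{C}^3$, Taylor's theorem yields $\nabla U_\epsilon(x)=Vx+O(\epsilon|x|^2)$, so $\nabla U_\epsilon$ converges uniformly on every compact to $\nabla \tilde U$, where $\tilde U(x):=\tfrac12\scal{x,Vx}$ is the associated Gaussian potential. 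The canonical rates are positively homogeneous, $\lambda_i^U(\epsilon x,\theta)=\epsilon\,\lambda_i^{U_\epsilon}(x,\theta)$, so a control $(\mathbf{t},\mathbf{i})$ is admissible for $U_\epsilon$ starting from $(x,\theta)$ iff the time-rescaled control $(\epsilon\mathbf{t},\mathbf{i})$ is admissible for $U$ starting from $(\epsilon x,\theta)$, and the two trajectories are related by the position dilation $y\mapsto\epsilon y$. It therefore suffices to build, for some fixed $\epsilon>0$, a family of controls admissible for $U_\epsilon$ with the $\fullyleadsto$ property joining any two points of $K:=\bar B(0,1)\times\{-1,1\}^d$; setting $\gamma:=\epsilon$ then finishes.

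\emph{Uniform Gaussian controls.} Fix $(P_0^*,P_1^*)\in K\times K$. Proposition~\ref{prop:gaussian_reachability} applied to $\tilde U$ provides an admissible control $(\mathbf{t}^*,\mathbf{i})$ from $P_0^*$ to $P_1^*$; inspection of its construction via Lemma~\ref{lem:asymptotically_flippable:positions} shows the index sequence $\mathbf{i}$ can be arranged to traverse every coordinate, so the control automatically has the $\fullyleadsto$ property. Keep $\mathbf{i}$ fixed and consider the endpoint map $\mathbf{t}\mapsto\Phi_{(\mathbf{t},\mathbf{i})}(P_0)$: its position-component Jacobian has as columns the velocities $\theta^{(0)},\theta^{(1)},\dots,\theta^{(m)}$ with $\theta^{(k)}:=F_{i_k}\cdots F_{i_1}\theta_0$. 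Consecutive velocities differ by $\pm 2 e_{i_k}$, and since every coordinate index appears in $\mathbf{i}$, the vectors $\theta^{(k)}$ span $\dR^d$, so this Jacobian has rank $d$. The implicit function theorem combined with the openness of admissibility (Remark~\ref{rem:admissible_on_open_set}) then yields an open neighborhood of $(P_0^*,P_1^*)$ in $K\times K$ on which a continuously parametrized family of times (with the same $\mathbf{i}$) provides an admissible $\tilde U$-control realizing each prescribed endpoint. Compactness of $K\times K$ extracts a finite list of index sequences $\mathbf{i}^{(1)},\dots,\mathbf{i}^{(N)}$ and a compact set of associated time parameters covering all of $K\times K$.

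\emph{Perturbation and conclusion.} The trajectory $\Phi_{(\mathbf{t},\mathbf{i})}(P_0)$ depends only on $(P_0,\mathbf{t},\mathbf{i})$, not on the potential, so the finite family above still realizes every prescribed endpoint when $\tilde U$ is replaced by $U_\epsilon$; only admissibility has to be checked. The switching events of the family lie in a fixed compact subset of $\dR^d\times\{-1,1\}^d$, and by compactness the $\tilde U$-rates at the flip instants are uniformly bounded below by some $\delta>0$. Since $\nabla U_\epsilon\to\nabla\tilde U$ uniformly on that compact, the $U_\epsilon$-rates at those instants converge uniformly to the $\tilde U$-rates, so they remain $\geq\delta/2$ for $\epsilon$ small enough. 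Rescaling back as in the first paragraph with $\gamma:=\epsilon$ produces the desired admissible, fully flipping $U$-controls joining any two points of $\bar B(0,\gamma)\times\{-1,1\}^d$.

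The main technical obstacle is the uniform construction of the second paragraph: extracting from the pointwise statement of Proposition~\ref{prop:gaussian_reachability} a finite family of controls that together cover the compact $K\times K$, which rests on the Jacobian rank computation and the implicit function theorem applied to the endpoint map. The scaling and perturbation steps that bracket it are routine given the $\mathcal{C}^3$ regularity of $U$.
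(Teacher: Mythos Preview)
Your proof is correct and follows essentially the same route as the paper: Gaussian reachability plus compactness to obtain uniform control bounds, then a scaling/Taylor perturbation to transfer admissibility to $U$. The packaging differs only cosmetically: you introduce the rescaled potentials $U_\epsilon$ and cover pairs in $K\times K$ directly via the implicit function theorem, whereas the paper fixes the origin as one endpoint, uses an explicit open cover $\mathcal U_{n,\theta,\eta}$ of the unit disc indexed by bounds on the minimal rate and maximal radius, and concatenates at the end; both extract uniform bounds by compactness and both rely on the same rank computation for the endpoint map (the paper states it as Lemma~\ref{lem:from_switches_to_openness}).
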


\begin{proof}
Let the switching rates for the Gaussian density $\pi^V$ be denoted by
$(\lambda_i^V)$. For a given control sequence $(\mathbf t, \mathbf i) = (t_0, \dots,
t_p; i_1, \dots, i_p)$  with associated switching points $(x(\tau_i),
\theta(\tau_i))_{i=1}^p$ and final point $(x(\tau_{p+1}),\theta(\tau_{p+1}))$,
define
\[ \lambda_{\min}^V(\mathbf t, \mathbf i) 
  = \min_{j=1,\dots,p} \lambda^V_{i_j}(x(\tau_j), \theta(\tau_j)) 
  \quad \text{and} \quad
  r_{\max}(\mathbf t, \mathbf i)
  = \max_{j=0, \dots, p+1} |x(\tau_j)|,
\]
for the minimum switching rate at a switching point and maximum distance from
the origin for the associated trajectory, respectively.  For $n \in \N$ and
$\theta, \eta \in \{-1,1\}^d$ define sets 
\begin{multline}
    \mathcal U_{n,\theta,\eta} 
   := \{ y \in \R^d: |y| < 2, (0,\theta) \fullyleadsto (y,\eta), \ \text{through a control $(\mathbf t, \mathbf i)$ such that} \\
\text{$\lambda_{\min}^V(\mathbf t, \mathbf i) > 1/n$ and $r_{\max}(\mathbf t, \mathbf i) < n$}\}. 
\end{multline}

Suppose $y \in \mathcal U_{n, \theta, \eta}$, so that there exists a control
$(\mathbf t, \mathbf i)$ taking $(0,\theta)$ to $(y,\eta)$ by which every
component of the direction vector is flipped. By perturbing the switching times
$t_1,\dots, t_p$ in the control, we find that $(0, \theta) \leadsto (y',\eta)$
for all $y'$ in a sufficiently small neighbourhood of $y$ through a control
$(\mathbf t', \mathbf i')$ such that $\lambda_{\min}^V(\mathbf t', \mathbf i')
> 1/n$ and $r_{\max}(\mathbf t', \mathbf i') < n$. It follows that $\mathcal
U_{n, \theta, \eta}$ is open for all $n, \theta, \eta$.
For a Gaussian density we have 
$(x, \theta) \fullyleadsto (y,\eta)$ for all $(x,\theta), (y,\eta) \in E$ by a repeated use of Proposition~\ref{prop:gaussian_reachability}. Thus for fixed $\theta, \eta$ we have the following
open cover of the closed unit disc $D = \{ y \in \R^d : |y| \leq 1\}$:
\[ 
  D \subset \bigcup_{n \in \N} \mathcal U_{n, \theta, \eta}.\]
By compactness of $D$, for all $\theta,\eta$, there exists an $N_{\theta,\eta} \in \N$ such that
\[
  \{ y \in \R^d : |y| \leq 1\} \subset \mathcal U_{N_{\theta,\eta}, \theta, \eta}.
  \]
Let $N := \max_{\theta, \eta} N_{\theta, \eta}$. It follows that for every
$\theta \in \{-1,1\}^d$ and $(y,\eta) \in E$, $|y| \leq 1$, we have
$(0,\theta) \fullyleadsto (y, \eta)$
through
trajectories with minimal switching rate larger than $1/N$ and a maximal
distance from the origin smaller than $N$.
By a Taylor expansion we have that, for some constant $c$, which we may assume
to satisfy $c >  1$, 
\begin{equation} 
  \label{eq:difference-from-gaussian}
  |\nabla U(x) - V x| \leq  c |x |^2 \quad \text{for $|x| \leq 1$}.
\end{equation}
Now let $\theta \in \{-1,1\}^d$ and $(y,\eta) \in E$,
such that $|y| < \gamma := \frac 1 {2 c N^3}$. Let $z = y/\gamma$ so that  $|z|
< 1$. There exists a control sequence $(\mathbf t, \mathbf i)$ for which $(0,\theta)
\fullyleadsto (z,\eta)$ such that $\lambda_{\min}^V(\mathbf t, \mathbf i) > \frac
1 N$ and $r_{\max}(\mathbf t, \mathbf i) < N$.  After a rescaling of $\mathbf
t$ to $\mathbf t' = \gamma \mathbf t$ we obtain a control sequence for
$(0,\theta) \fullyleadsto (y,\eta)$ such that $\lambda_{\min}^V(\mathbf t',
\mathbf i) > \frac \gamma N = \frac 1 {2c N^4}$ (since the switching rates for
the Gaussian potential scale linearly with distance from the origin), and such
that the complete trajectory is contained within a ball of radius $\gamma N <
\frac 1 {2 c N^2} < 1$, so that we may
apply~\eqref{eq:difference-from-gaussian} along the trajectory. Along the trajectory with switching times $(\tau_j)_{j=1}^p$
corresponding to the control sequence $(\mathbf t',\mathbf i)$, we obtain
\[ 
  |\nabla U(x(\tau_j)) - V x(\tau_j)| 
  \leq c |x(\tau_i)|^2 < c \gamma^2 N^2
  = \tfrac 1 {4cN^4},
  \quad j = 1, \dots, p,
\]
so that, for all $j = 1, \dots, p$,
\[
  \lambda_{i_j}(x(\tau_j),\theta(\tau_j))
  = (\theta(\tau_j) \partial_{i_j} U(x(\tau_j)))_+ 
  \geq \lambda^V_{i_j}(x(\tau_j), \theta(\tau_j)) - \tfrac 1 {4cN^4} 
  > \tfrac 1 {4 cN^4} > 0,
\]
i.e. the control sequence $(\mathbf t', \mathbf i)$ is admissible for
$(0,\theta) \fullyleadsto  (y,\eta)$ with respect to the switching rates
$(\lambda_i)$. 

By an analogous argument there exists an admissible control sequence for
$(y,\eta) \fullyleadsto (0,\theta)$. The statement of the proposition follows by
concatenation of trajectories.
\end{proof}

\subsection{Flippability}
Recall that $(x,\theta) \fullyleadsto (y,\eta)$ if there 
is an admissible path from $(x,\theta)$ to $(y,\eta)$  along which all components of the velocity are switched. 
\begin{definition}[Full flippability]
  \label{def:full_flip}
  The process is \emph{fully flippable} if  for each $(x,\theta)$, there exists a $(y,\eta)$
  such that $(x,\theta)\fullyleadsto (y,\eta)$, 
\end{definition}

\begin{proposition}
  \label{prop:full_flip}
  If the potential $U$ satisfies Growth Condition~\ref{ass:GC1}, then the 
  process is fully flippable. 
\end{proposition}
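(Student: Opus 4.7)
The plan is to prove this by induction on $k$: starting from any fixed $(x_0,\theta_0)\in E$, I will build an admissible trajectory flipping at least $k$ distinct components, for $k$ ranging from $0$ to $d$. The case $k=d$ is exactly full flippability. Equivalently (using transitivity of $\leadsto$), it suffices to prove the following sub-claim: for every $(x,\theta)\in E$ and every $S\subsetneq\{1,\dots,d\}$, there exists an admissible trajectory starting at $(x,\theta)$ along which some component $i\in\bar S:=\{1,\dots,d\}\setminus S$ is flipped. Taking $S$ to be the set of components already flipped in the current partial trajectory, concatenation then gives the inductive step.

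For the base case (first flip), I follow $\theta_0$ from $x_0$. By Growth Condition~\ref{ass:GC1}, $U(x_0+t\theta_0)\to\infty$ as $t\to\infty$, so the fundamental theorem of calculus gives $\int_0^\infty\langle\theta_0,\nabla U(x_0+t\theta_0)\rangle\,dt=+\infty$. Hence at some $t>0$, $\langle\theta_0,\nabla U(x_0+t\theta_0)\rangle>0$, forcing $\theta_{0,i}\partial_iU(x_0+t\theta_0)>0$ for at least one $i$. So $\lambda_i>0$ at that point and the flip is admissible.

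To prove the sub-claim, argue by contradiction: suppose no admissible continuation from $(x,\theta)$ flips any $i\in\bar S$. Then at every admissibly reachable state $(y,\eta)$ one has $\eta_{|\bar S}=\theta_{|\bar S}$ (these components are never flipped along any such trajectory) and $\lambda_i(y,\eta)=0$ for $i\in\bar S$, equivalently $\eta_i\partial_iU(y)\leq 0$. After relabeling we may assume $\theta_i=+1$ for all $i\in\bar S$, so that $\partial_iU(y)\leq 0$ on the reachable position set $R$ for every $i\in\bar S$. In particular, the straight ray $y(t)=x+t\theta$ lies in $R$; since $y_i(t)=x_i+t\to\infty$ for $i\in\bar S$, we have $|y(t)|\to\infty$ and thus $U(y(t))\to\infty$ by GC1.

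The main obstacle, and the real content of the proof, is to derive a contradiction by exhibiting a point of $R$ at which $\partial_{i_0}U>0$ for some $i_0\in\bar S$. The approach is: decompose $\frac{d}{dt}U(y(t))=\sum_{i\in\bar S}\partial_iU(y(t))+\sum_{i\in S}\theta_i\partial_iU(y(t))$; the first sum is non-positive on $R$, yet the left-hand side has divergent integral on $(0,\infty)$, forcing $\int_0^\infty\sum_{i\in S}\theta_i\partial_iU(y(s))\,ds=+\infty$. This guarantees arbitrarily late times along the ray at which some $S$-flip is admissible, providing the freedom to branch off the straight ray. By iterating such $S$-flips at carefully chosen times, one constructs an admissible trajectory in $R$ whose $\bar S$-coordinates still diverge (they increase at unit rate in every segment since $\eta_{|\bar S}=\theta_{|\bar S}$) while its $S$-coordinates can be steered to return arbitrarily close to any prescribed target $y_S^*$. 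Finally, applying GC1 to the slice $y_{\bar S}\mapsto U(y_{\bar S},y_S^*)$ (which also tends to $+\infty$) yields a point $(y_{\bar S}^*,y_S^*)$ with $\partial_{i_0}U>0$ for some $i_0\in\bar S$; by the steering argument this point is approximable by reachable points in $R$, contradicting $\partial_{i_0}U\leq 0$ on $R$.

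The technically delicate part is verifying that the $S$-steering is indeed admissible, that is, that each intermediate flip of an $S$-component occurs at a time where the corresponding rate is strictly positive; this reliess on Remark~\ref{rem:admissible_on_open_set} to perturb switching times slightly without losing admissibility, and on repeatedly applying the same divergence argument to each segment of the trajectory so that $S$-flips are available arbitrarily late.
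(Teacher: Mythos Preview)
Your inductive reduction to the sub-claim is sound, as is the observation that the contradiction hypothesis forces $\partial_i U \leq 0$ on the reachable set $R$ for every $i \in \bar S$. The gap lies in the steering step. You assert that the $S$-coordinates ``can be steered to return arbitrarily close to any prescribed target $y_S^*$,'' but the tools you invoke do not establish this. The divergence argument only guarantees that \emph{some} index $j \in S$ becomes flippable arbitrarily late along each segment; it does not say which one, nor does it control the set of admissible flip times. There is no reason a specific $j \in S$ ever becomes flippable, and even for those that do, the constraint $\eta_j \partial_j U > 0$ may confine the flip times to a set too sparse to hit a prescribed $y_S^*$ at the prescribed time (recall that the $\bar S$-coordinates fix the clock, so you must hit $y_S^*$ at a particular $T$). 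Remark~\ref{rem:admissible_on_open_set} gives only local perturbation freedom around an already-admissible sequence and does not resolve this. You correctly flag the step as ``technically delicate,'' but listing the tools one hopes to use is not a proof; what you are asserting is a controllability result for the $S$-coordinates that is not obviously easier than the proposition itself.

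The paper's argument sidesteps steering entirely. Rather than aiming at a target point, it constructs for each $\eps > 0$ and $T > 0$ an admissible trajectory of length $T$ along which $\theta_i(t)\,\partial_i U(x(t)) < \eps$ for \emph{every} $i$, simply by flipping any component as soon as its rate exceeds $\eps/2$ (such a flip is automatically admissible), and using a local Hessian bound to show this can be maintained up to time $T$. This forces $U(x(T)) \leq U(x) + \eps d T$; but the contradiction hypothesis guarantees some coordinate changes monotonically by $T$, contradicting Growth Condition~\ref{ass:GC1} as $\eps \to 0$. No steering and no induction are needed. Incidentally, the same construction, restricted to $S$-flips (the $\bar S$-rates are nonpositive on $R$ anyway, so they need not be touched), would also prove your sub-claim directly and fill the gap in your outline.
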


\begin{proof}
  By definition, the process is fully flippable if for all points 
  $(x,\theta)$, there exists an admissible control sequence $(\textbf{i},\textbf{t})$ 
  such that all indices appear in $\textbf{i}$. Striving for a contradiction, 
  suppose that there is an $(x,\theta)$ such that, for  any admissible control sequence, 
  there is an index in $\{1,...,d\}$ that does not appear in the indices sequence. 
  Suppose that  starting from $(x,\theta)$, 
  we are able to construct, for any $\eps$ and any $T$, 
  an admissible trajectory $(x(t),\theta(t))_{t\in [0,T]}$ along
  which the following bound holds: 
  \begin{equation}
    \label{eq:niceCS}
    \forall i, \forall t\in[0,T], \quad 
    \theta(t) \partial_i U (x(t)) < \eps. 
  \end{equation}
  Integrating $U$ along this trajectory, we get $U(x(T)) \leq U(x) + \eps d
  T$. However, by hypothesis this trajectory leaves at least one index in
  the velocity unchanged, so $\| x(T) -  x\|_\infty \geq T$. This shows that
  \[ 
    \inf\{ U(y): y \text{ such that } \|y-x\|_\infty \geq T\}\leq U(x) + \eps dT,
\]
and is therefore not larger than $U(x)$ by taking $\eps$ to zero. This contradicts
  the hypothesis that $U$ converges to infinity. 
 
  Let us now prove that such trajectories exist. Fix $\eps>0$, 
 and  say $T$ is ``nice'' if there exists an admissible control sequence
 starting from $(x,\theta)$ such that the bound \eqref{eq:niceCS} holds. 
  The set of nice $T$ is clearly open in $[0,\infty)$, so it will be enough 
 to check that it is closed. 

 To this end, suppose that the $T_n$ are an increasing sequence of nice times
  converging to $T$. The natural idea to construct a nice trajectory of length 
  $T$ is to pick a trajectory of length $T_n$ and continue it in the 
  final direction $\theta_{T_n}$ until time $T$. The corresponding trajectory
  will be admissible, but it may fail to satisfy \eqref{eq:niceCS} if, during
  the interval $[T_n,T)$, one of the quantities $(\theta(t))_i \partial_i U(x(t))$ crosses
  the level $\eps$. We will prove that by switching the corresponding indices, 
  we can construct a nice trajectory. 
  
  Since the process moves at finite speed, we know that 
  all admissible trajectories of length less than $T$ starting from $(x,\theta)$ 
  will lie in a bounded set, only depending on $T$. Let $C_T$ be an upper
 bound on the Hessian of $U$ on this bounded set. Let $n$ be large enough 
so that $T - T_n < \eps/{2C_T}$, and consider a ``nice'' trajectory of length $T_n$;
we wish to continue it up to time $T$. 
Let $D=\{i_1,...,i_m\}$ be the set of ``dangerous'' indices, that is, indices for which 
$\theta_i \partial_i U(x(T_n)) > \eps/2$. Consider the trajectory obtained by concatenating
the nice $T_n$ control sequence with the sequence $(i_1,...,i_m ; \eps',...,\eps', T-T_n - m\eps')$. 
If $\eps'$ is small enough, this trajectory will be both admissible and nice: 
all ``dangerous'' indices will be switched before the corresponding product reaches $\eps$, 
and they will not have time to grow up to $\eps$ again. The set of nice $T$ is therefore 
$[0,\infty)$ in its entirety. 
\end{proof}

\subsection{Reachability in the general case}
\label{sub:reachability_general}

\begin{lemma}
  \label{lem:from_switches_to_openness}
  If $(x,\theta)\fullyleadsto (y,\eta)$,  then there is an open neighborhood 
  $U$ of $(y,\eta)$ such that for all $(y',\eta')\in U$, $(x,\theta)\fullyleadsto (y',\eta')$. 
\end{lemma}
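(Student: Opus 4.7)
The plan is to use the admissible control sequence witnessing $(x,\theta)\fullyleadsto (y,\eta)$ and perturb only the time durations, keeping the sequence of flipped indices fixed. Since $\{-1,1\}^d$ carries the discrete topology, an open neighborhood of $(y,\eta)$ has the form $V \times \{\eta\}$ for some open $V\subset \dR^d$, so it is enough to hit an open set of positions $y'$ while preserving the final velocity $\eta$.

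Fix an admissible control $\mathbf{u} = (\mathbf{t}^*, \mathbf{i})$ with $\mathbf{t}^* = (t_0^*,\dots,t_m^*)\in(0,\infty)^{m+1}$ and $\mathbf{i} = (i_1,\dots,i_m)$ realizing the full-flip reachability, and denote by $v_0 = \theta, v_1, \dots, v_m$ the successive velocities along the trajectory, so that $v_k = F_{i_k} v_{k-1}$. These velocities depend only on $\mathbf{i}$ (and $\theta$), not on the time durations, and in particular the final velocity $v_m = \eta$ is the same for any perturbation of $\mathbf{t}$. For any $\mathbf{t}=(t_0,\dots,t_m)$ close to $\mathbf{t}^*$, the position part of $\Phi_{(\mathbf{t},\mathbf{i})}(x,\theta)$ is the affine map
\[
   y(\mathbf{t}) = x + \sum_{k=0}^m t_k v_k.
\]

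The key step is to check that the differential of $\mathbf{t}\mapsto y(\mathbf{t})$ has full rank $d$. Its image is the linear span of $\{v_0,\dots,v_m\}$. Consecutive velocities satisfy $v_k - v_{k-1} = -2 v_{k-1,i_k}\, e_{i_k}$, so $e_{i_k}$ lies in $\mathrm{span}(v_0,\dots,v_m)$ for every $k$. Because the control is a full-flip control, every index in $\{1,\dots,d\}$ appears among the $i_k$, so all canonical basis vectors $e_1,\dots,e_d$ lie in the span, and hence $\mathrm{span}(v_0,\dots,v_m) = \dR^d$. Therefore $y(\cdot)$ is a linear submersion at $\mathbf{t}^*$, and in particular an open map in a neighborhood of $\mathbf{t}^*$.

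Combining these two facts, there exists an open neighborhood $W\subset (0,\infty)^{m+1}$ of $\mathbf{t}^*$ such that: (i) by Remark~\ref{rem:admissible_on_open_set}, the control $(\mathbf{t},\mathbf{i})$ is admissible from $(x,\theta)$ for every $\mathbf{t}\in W$; and (ii) the image $y(W)$ contains an open neighborhood $V$ of $y$ in $\dR^d$. For any $y'\in V$, pick $\mathbf{t}\in W$ with $y(\mathbf{t})= y'$; the associated control is admissible, uses the same index sequence $\mathbf{i}$ (hence flips every component at least once), and sends $(x,\theta)$ to $(y',\eta)$. Thus $(x,\theta)\fullyleadsto (y',\eta')$ for every $(y',\eta')$ in the open neighborhood $V\times\{\eta\}$ of $(y,\eta)$, which is the desired conclusion. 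The only mild technical point is the span argument of the middle paragraph; once that is in place, everything follows from the elementary openness of submersions and the stability of admissibility noted in Remark~\ref{rem:admissible_on_open_set}.
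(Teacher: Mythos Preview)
Your proof is correct and follows essentially the same approach as the paper: perturb the time vector while keeping the index sequence fixed, observe that the resulting endpoint map $\mathbf{t}\mapsto x+\sum_k t_k v_k$ is a submersion because consecutive velocities differ by $\pm 2e_{i_k}$ and all indices appear, and use continuity of the switching rates (Remark~\ref{rem:admissible_on_open_set}) to retain admissibility. The only differences are cosmetic---you spell out the discrete topology on $\{-1,1\}^d$ and cite the remark explicitly, but the argument is the same.
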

\begin{proof}
  By hypothesis there is a sequence of times and indices such that 
  \[ y = x + t_0 \theta + t_1 F_{i_1}\theta + \cdots t_n F_{i_1,...,i_n}\theta. \]
  Define $\Phi:(s_0,...,s_n)\mapsto  x + s_0 \theta + s_1 F_{i_1}\theta + \cdots s_n F_{i_1,...,i_n}\theta$. 
  Then $D\Phi = (\theta,F_{i_1}\theta,...,F_{i_1,...,i_n} \theta)$. 
  Since the difference between two consecutive vectors in this family
  is $\pm 2 e_{i_k}$, the map $\Phi$ has full rank if all components are switched at least once. 
  Therefore $\Phi$ is a submersion from a neighborhood of $(t_0,...,t_n)$ to 
  a neighborhood of $y$. By continuity of the switching rates, we may assume without loss of
  generality that for all $(s_0,...,s_n)$ in this neighborhood, the corresponding
  trajectory is admissible. Since the sequence of switches is the same as the original trajectory, 
  we get the conclusion. 
\end{proof}

Say $(x,\theta) \sim (x',\theta')$ if they are equal or if $(x,\theta)\leadsto(x',\theta')\leadsto(x,\theta)$. 
Denote by $\cl{x,\theta}$ the equivalence class of $(x,\theta)$
and by $R$ the velocity reversal (applied to points in, or subsets of, $\dR^d\times \{-1,1\}^d$). 
\begin{lemma}
  \label{lem:alternativeForClasses}
  The equivalence classes of $\sim$ are either a single point or an open set in $\dR^d\times \{-1,1\}^d$. 

  For any $(x,\theta)$, $R(\cl{x,\theta}) = \cl{ R(x,\theta)}$. In particular, the
  classes of $(x,\theta)$ and $(x,-\theta)$ have the same type (open or singleton). 
\end{lemma}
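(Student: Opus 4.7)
The plan is to prove the dichotomy by showing that if $\cl{x,\theta}$ is not the singleton $\{(x,\theta)\}$, then it contains an open neighborhood of $(x,\theta)$ in $E$. Since $\sim$ is an equivalence relation, the same argument applied at each point of the class then shows the whole class is open.

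The key reduction is that a class with more than one element forces the existence of a \emph{fully flipping loop} at $(x,\theta)$. Picking any $(x',\theta')\in\cl{x,\theta}$ distinct from $(x,\theta)$ and concatenating the two admissible controls witnessing $(x,\theta)\leadsto(x',\theta')\leadsto(x,\theta)$, one obtains an admissible control $(\mathbf{t},\mathbf{i})$ with $\Phi_{(\mathbf{t},\mathbf{i})}(x,\theta)=(x,\theta)$ and total duration $T>0$. If some coordinate $j$ did not appear in $\mathbf{i}$, the $j$-th component of the velocity would remain constantly equal to $\theta_j$ along the whole trajectory, so the $j$-th coordinate of the endpoint would be $x_j+\theta_jT\neq x_j$, contradicting loop closure. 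Hence $(x,\theta)\fullyleadsto(x,\theta)$.

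Openness then follows from two applications of Lemma~\ref{lem:from_switches_to_openness} bridged by the time-reversal identity of Remark~\ref{rem:time_reversal}. Applied directly to the loop, Lemma~\ref{lem:from_switches_to_openness} yields an open neighborhood $W_+\subset\dR^d$ of $x$ such that $(x,\theta)\leadsto(y,\theta)$ for every $y\in W_+$ (the velocity factor is discrete, so this is the form the neighborhood takes in $E$). Reversing the loop via Remark~\ref{rem:time_reversal} produces an admissible fully flipping loop at $(x,-\theta)$---admissibility and the set of flipped indices are preserved---and a further application of Lemma~\ref{lem:from_switches_to_openness} at $(x,-\theta)$, followed by time reversal, yields an open $W_-\subset\dR^d$ containing $x$ such that $(y,\theta)\leadsto(x,\theta)$ for every $y\in W_-$. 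For $y\in W_+\cap W_-$ we therefore have $(x,\theta)\sim(y,\theta)$, giving the desired open neighborhood of $(x,\theta)$ inside $\cl{x,\theta}$.

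The second assertion of the lemma is a direct consequence of Remark~\ref{rem:time_reversal}: since time reversal turns $(x,\theta)\leadsto(x',\theta')$ into $(x',-\theta')\leadsto(x,-\theta)$ and is an involution, $(x,\theta)\sim(x',\theta')$ if and only if $(x,-\theta)\sim(x',-\theta')$, so $R(\cl{x,\theta})=\cl{R(x,\theta)}$; in particular the two classes are simultaneously singletons or simultaneously open. The main obstacle I anticipate is the one-sidedness of Lemma~\ref{lem:from_switches_to_openness}, which only delivers forward reachability of a neighborhood from $(x,\theta)$: the decisive point is that a non-trivial class automatically produces a loop, and fully flipping loops behave well under time reversal, which symmetrises the lemma and supplies the matching backward reachability.
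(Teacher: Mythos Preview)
Your proof is correct and follows essentially the same approach as the paper's: both begin by observing that a non-trivial class forces an admissible loop at $(x,\theta)$ in which every index must appear (else a coordinate drifts monotonically), and then use Lemma~\ref{lem:from_switches_to_openness} together with time reversal (Remark~\ref{rem:time_reversal}) to upgrade this to openness. The only cosmetic difference is that the paper proves openness directly at an arbitrary point $(y,\eta)$ of the class via $(x,\theta)\fullyleadsto(x,\theta)\leadsto(y,\eta)$ and the reversed analogue, whereas you prove it at $(x,\theta)$ and then invoke the equivalence relation to propagate; the two are equivalent.
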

\begin{proof}
  Suppose that  $(x,\theta)$ and $(x',\theta')$ are two different equivalent points.
  This means that there is an admissible loop starting from, and returning to,
  $(x,\theta)$.  Along such a loop all components of the velocity must be
  flipped at least once: if the $i$\textsuperscript{th} component of the
  velocity stays at $1$ (resp. $-1$),  then the $i$\textsuperscript{th}
  component of the position strictly increases (resp. decreases) along the
  loop, a contradiction. Therefore if $\cl{x,\theta}$ is not a singleton, then 
  $x\fullyleadsto x$. 
  
  Let us now prove openness. If $(y,\eta)$ is in the non-trivial 
  class of $(x,\theta)$, then $(x,\theta)\fullyleadsto (x,\theta)\leadsto(y,\eta)$, 
  so $(x,\theta)$ leads to all points near $(y,\eta)$. Similarly, 
  $(x,-\theta)\fullyleadsto (y,-\eta)$, so $(x,-\theta)$ leads to all 
  points in a neighborhood of $(y,-\eta)$, and by reversal, 
  all points near $(y,\eta)$ must lead to $(x,\theta)$. Therefore
  all points near $(y,\eta)$ are in fact equivalent to $(x,\theta)$ 
  and the class is open. 

  The reversal property is a consequence of the similar property for $\leadsto$. 
\end{proof}

\begin{proposition}
  [Stability of open classes]
  \label{prop:stabilityOfOpenClasses}
  The open equivalent classes are ``almost stable'' under $\leadsto$ and its inverse, that is, 
  if the class of $(x,\theta)$ is open, then for $\pi$-almost every $(y,\eta)$, 
  we have the equivalence  $(y,\eta) \leadsto (x,\theta) \iff (x,\theta)\leadsto(y,\eta)
  \iff (x,\theta)\sim (y,\eta)$.

  If the process is fully flippable in the sense of Definition~\ref{def:full_flip},
  then the open classes 
  are of the form $\dR^d\times V$, where $V$ is a subset of the velocities $\{-1,1\}^d$. 
\end{proposition}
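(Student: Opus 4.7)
Let $C=\cl{x,\theta}$. Since $C$ is open, Lemma~\ref{lem:alternativeForClasses} tells us it is non-trivial, so there is a fully-flipping loop $(x,\theta)\fullyleadsto(x,\theta)$. Concatenating this loop in front of any admissible path from $(x,\theta)$ to a reachable $(y,\eta)$ gives $(x,\theta)\fullyleadsto(y,\eta)$, and Lemma~\ref{lem:from_switches_to_openness} then yields that the forward orbit $\mathrm{Orb}^+(x,\theta):=\{(y,\eta):(x,\theta)\leadsto(y,\eta)\}$ is open. The openness of $\cl{x,-\theta}$ (Lemma~\ref{lem:alternativeForClasses}) combined with the time-reversal property (Remark~\ref{rem:time_reversal}) gives the symmetric statement for $\mathrm{Orb}^-(x,\theta):=\{(y,\eta):(y,\eta)\leadsto(x,\theta)\}$. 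Since $C=\mathrm{Orb}^+\cap\mathrm{Orb}^-$ up to the single point $(x,\theta)$, the claim reduces to proving that $\pi(\mathrm{Orb}^+\setminus\mathrm{Orb}^-)=\pi(\mathrm{Orb}^-\setminus\mathrm{Orb}^+)=0$; by the $R$-symmetry these two assertions are equivalent.

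The first key ingredient is a dimension count. Write $\mathrm{Orb}^+(x,\theta)$ as the countable union, over index sequences $\mathbf{i}=(i_1,\dots,i_m)$, of the smooth images $\Phi_{(\cdot,\mathbf{i})}(A_{\mathbf{i}})$ of the open sets of admissible time vectors. The Jacobian of $\mathbf{t}\mapsto\Phi_{(\mathbf{t},\mathbf{i})}(x,\theta)$ is exactly the family of successive velocities $(\theta,F_{i_1}\theta,\dots,F_{(i_1,\dots,i_m)}\theta)$, and since consecutive velocities differ by $\pm 2e_{i_k}$, this family spans $\R^d$ if and only if every index in $\{1,\dots,d\}$ appears in $\mathbf{i}$, i.e.\ $\mathbf{i}$ is fully flipping. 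For any non-fully-flipping $\mathbf{i}$ the image therefore lies in a strict affine subspace of its velocity slice and is Lebesgue- and $\pi$-null. It follows that for $\pi$-a.e.\ $(y,\eta)\in\mathrm{Orb}^+(x,\theta)$ one actually has $(x,\theta)\fullyleadsto(y,\eta)$, and hence by time reversal $(y,-\eta)\fullyleadsto(x,-\theta)$.

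The main obstacle is the final step, namely concluding that generic such $(y,\eta)$ also satisfy $(y,\eta)\leadsto(x,\theta)$: direct time-reversal of a trajectory only flips the velocities at both endpoints. The plan is to exploit the invariance of $\pi$ together with the fact that the time-reversed zigzag with canonical rates is itself a zigzag with reversed velocity. The deterministic set $\mathrm{Orb}^+(x,\theta)$ is forward-invariant under the process, and the complement of $\mathrm{Orb}^-(x,\theta)$ is forward-invariant too (a trajectory that ever leaves $\mathrm{Orb}^-$ cannot return, by transitivity of reachability), so the bad set $\mathrm{Orb}^+\setminus\mathrm{Orb}^-$ is forward-invariant. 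Applying the dimension/openness argument above to the time-reversed process, and using $\pi$-invariance together with the forward-invariance just established, forces any $\pi$-mass in the bad set to be exchanged with itself under both the forward and the reversed dynamics; combining with Lemma~\ref{lem:from_switches_to_openness} on the time-reversed trajectory (which produces an open neighborhood of $(x,-\theta)$ reachable from $(y,-\eta)$ meeting the open class $\cl{x,-\theta}$) closes up the argument.

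For the second statement, assume the process is fully flippable, let $C$ be an open class, and set $V:=\{\eta\in\{-1,1\}^d:C\cap(\R^d\times\{\eta\})\neq\emptyset\}$. Given $\eta\in V$ and an arbitrary $y'\in\R^d$, fix $(y,\eta)\in C$; the plan is to construct admissible paths in both directions between $(y',\eta)$ and $(y,\eta)$. Full flippability at $(y',\eta)$ provides a fully-flipping admissible trajectory starting from $(y',\eta)$, and Lemma~\ref{lem:from_switches_to_openness} allows its endpoint to be placed in any sufficiently small neighborhood, in particular into the open set $C$. The first part then connects this endpoint to $(y,\eta)$ itself inside $C$. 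Applying the same construction starting from $(y,\eta)$ and invoking time reversal yields the reverse path, so $(y',\eta)\sim(y,\eta)$ and $(y',\eta)\in C$. The reverse inclusion being tautological, $C=\R^d\times V$.
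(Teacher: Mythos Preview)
Your proposal has genuine gaps in both parts.

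\textbf{First statement.} The dimension count is a correct observation, but your resolution of what you yourself flag as ``the main obstacle'' is circular. You obtain, for $\pi$-a.e.\ $(y,\eta)\in\mathrm{Orb}^+(x,\theta)$, the relation $(x,\theta)\fullyleadsto(y,\eta)$, reverse it to $(y,-\eta)\fullyleadsto(x,-\theta)$, apply Lemma~\ref{lem:from_switches_to_openness} to land $(y,-\eta)$ in the open class $\cl{x,-\theta}$, and then ``close''. But $(y,-\eta)\leadsto(x,-\theta)$ time-reverses to $(x,\theta)\leadsto(y,\eta)$, which is exactly what you started from; it does \emph{not} give $(y,\eta)\leadsto(x,\theta)$. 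The forward-invariance of $\mathrm{Orb}^+\setminus\mathrm{Orb}^-$ that you note is correct but by itself only yields the trivial inequality $\pi(B)\geq\pi(B)$ from stationarity. The paper's argument is different and avoids this trap: it applies $\pi$-invariance through the resolvent directly to the forward-stable set $O_+$, deducing that for $\pi$-a.e.\ $z$, either $z\in O_+$ or the process from $z$ never hits $O_+$. Since $O\subset O_+$ is open, any $z$ with $z\leadsto O$ has the process entering $O_+$ with positive probability, hence $z\in O_+$, which closes the loop $z\leadsto O\leadsto z$.

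\textbf{Second statement.} Your construction assumes that the fully-flipping trajectory from the arbitrary point $(y',\eta)$, after the local perturbation granted by Lemma~\ref{lem:from_switches_to_openness}, can be steered into $C$. But Lemma~\ref{lem:from_switches_to_openness} only produces a neighbourhood of the \emph{particular} endpoint $(z,\xi)$ furnished by full flippability; there is no reason this neighbourhood meets $C$. Your time-reversal step has the same defect in the other direction. The paper proceeds in two stages you have skipped: first it upgrades the almost-sure equivalence to an exact one (if $(x,\theta)\in O$ and $(x,\theta)\leadsto(y,\eta)$, use full flippability at $(y,\eta)$ to reach an open set, which by the a.e.\ result must contain a point leading back to $(x,\theta)$, so $(y,\eta)\in O$); only then does it show $O$ is topologically closed by translating an admissible control from a boundary point to a nearby interior point and invoking the exact forward stability just established.
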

\begin{remark}[Terminology]
  In the countable state setting, classes that are stable under the 
 analogue of $\leadsto$ are  called ``essential'' (see, e.g., \cite{LPW09}).
 In a general state space, it is known that the communication structures 
 are more difficult to define and study; this has led in particular 
to the definition of $\psi$-irreducibility,  see \cite[Chapter 5]{MT09}. 
It turns out that in our particular case, the relation $\leadsto$ defines 
interesting equivalence classes that we can study before discussing 
$\psi$-irreducibility. 
\end{remark}
\begin{proof}
  The first step is probabilistic. 

  Let $O$ be an open class. Let $O_+$ be the ``future'' of $O$, that is, 
  the set of $(y,\eta)$ such that there exists $(x,\theta)\in O$ such 
  that $(x,\theta)\leadsto (y,\eta)$. Note that since $(x,\theta)\fullyleadsto (x,\theta)$, 
  $O_+$ is open, therefore measurable. Let $P^t((x,\theta),A)$ denote the Markov transition kernel of the zigzag process. Let us use the invariance of $\pi$ 
  through the resolvent kernel: 
  \begin{align*}
    \pi(O_+) &= \int_0^\infty e^{-t} \pi P^t(\cdot, O_+) dt \\
    &= \int_0^\infty \int_{E} e^{- t} \prb[(x,\theta)]{ (X_t,\Theta_t)\in O_+} d\pi(x,\theta) dt \\
    &= \int_0^\infty \int_{E} e^{- t} \ind{(x,\theta)\in O_+} \prb[(x,\theta)]{ (X_t,\Theta_t)\in O_+} d\pi(x,\theta) dt \\
    &\quad   + \int_0^\infty \int_{E} e^{- t}  \ind{(x,\theta) \notin O_+} \prb[(x,\theta)]{ (X_t,\Theta_t)\in O_+} d\pi(x,\theta) dt.
  \end{align*}
  Since $O_+$ is stable by $\leadsto$, the probability in the first integral is $1$, so the whole first integral 
  is equal to $\pi(O_+)$. Therefore the second integral must vanish: for all $(x,\theta)$ in some 
  set $A$ of full $\pi$-measure, 
  \[ \ind{(x,\theta)\notin O_+} \int_0^\infty e^{-t} \prb[(x,\theta)]{ (X_t,\Theta_t)\in O_+} dt = 0.\]
  If $(x,\theta)$ is in $A$  and leads to a point in $O$, then the 
  probability above is strictly positive, so $(x,\theta)$ must be in $O_+$. Consequently we can 
  build a loop from $(x,\theta)$ that intersects $O$, so $(x,\theta)$ is in $O$. 

  In the other direction, we use reversal. Without loss of generality we may assume 
  $A$ is stable by reversal of velocities. If $(x,\theta)$ in $A$ is reachable from 
  a point $(y,\eta)$ in $O$, then $(x,-\theta)\leadsto(y,-\eta)$, so $(x,-\theta)\in RO$, 
  and $(x,\theta)\in O$.

  We now prove a stronger stability statement by  getting rid of the ``$\pi$-almost surely''. 
  Consider a point $(x,\theta)$ in an open class $O$ and 
  suppose that $(y,\eta)$ is reachable from $(x,\theta)$. By 
  the assumption, we can find a $(z,\xi)$ such that $(y,\eta)\fullyleadsto (z,\xi)$. 
  By Lemma~\ref{lem:from_switches_to_openness}, $(y,\eta) \leadsto (z',\xi')$ for all $(z',\xi')$ 
  in a neighborhood of $(z,\xi)$. By transitivity, $(x,\theta)$ itself leads
  to all points in this neighborhood. Such a neighborhood must have a positive 
  $\pi$-measure, so at least one of the $(z',\xi')$ leads back to $(x,\theta)$. 
  Therefore we have a loop $(x,\theta)\leadsto (y,\eta)\leadsto (z',\xi')\leadsto(x,\theta)$, 
  so all three points are in the same class, so open classes are stable by $\leadsto$. 
  Using reversal it is easy to see that they are also stable in the other direction.

  The third step of the proof is to use the stability to prove that non-trivial classes
  are closed, and must therefore consist of a certain number of copies of $\dR^d$. Let 
  $O$ be a non-trivial class, and let $(x,\theta)$ be a point in the (topological) 
  closure of $O$. By Lemma~\ref{lem:from_switches_to_openness}, there exists 
  a $(y,\eta)$ and an open set $\mathcal{U}$ such that  $(x,\theta)$ 
  leads to all points in $\mathcal{U}$. 
  Write
  $y = x+ t_0 \theta + \cdots + F_{i_1,...,i_n} \theta$ for some sequence 
  of times and indices. By continuity of the switching rates, 
  the same control sequence will be admissible if $x'$ is close to $x$, and will 
  lead from $(x',\theta)$ to the point $(y',\eta) = (y+x'-x,\eta)$. 
  Since $x$ is in the closure of $O$, we can find $x'$ in $O$ such 
  that $(x',\theta)\leadsto (y',\eta)$, and we may assume that $(y',\eta)$ is
  in $\mathcal{U}$, so that $(x,\theta)\leadsto(y',\eta)$. 
  Since $(x',\theta)$ is in $O$, $(y',\eta)$ is also in $O$ by forward stability, 
  so $(x,\theta)$ is itself in $O$, proving that $O$ is closed. 
\end{proof}

\begin{theorem}
  \label{thm:reachability}
  If the potential $U$ is $\mathcal C^3$, satisfies Growth Condition~\ref{ass:GC1}, and has a nondegenerate local minimum, 
  then there is only one equivalence class. In particular $(x,\theta) \leadsto (y,\eta)$ for all $(x,\theta) \in E$ and $(y,\eta) \in E$.
\end{theorem}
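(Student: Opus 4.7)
The plan is to combine three ingredients already established: local reachability near the nondegenerate minimum (Lemma \ref{lem:nondegeneratelocalminimum}), full flippability under GC\ref{ass:GC1} (Proposition \ref{prop:full_flip}), and the structural description of open equivalence classes under full flippability (Proposition \ref{prop:stabilityOfOpenClasses}). Each of these required substantial work, so the theorem itself should amount to an assembly rather than to any new construction.

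First, translating the nondegenerate local minimum to the origin, I would apply Lemma \ref{lem:nondegeneratelocalminimum} to obtain a radius $\gamma>0$ such that $(0,\theta)\fullyleadsto(0,\eta)$ for every pair of velocities $\theta,\eta\in\{-1,1\}^d$. In particular, all $2^d$ points of the form $(0,\eta)$ belong to the single equivalence class of $(0,\theta)$ under $\sim$, so this class is not a singleton. By Lemma \ref{lem:alternativeForClasses}, it is therefore open; call it $O$.

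Next, Growth Condition~\ref{ass:GC1} lets us invoke Proposition \ref{prop:full_flip} and conclude that the process is fully flippable, after which Proposition \ref{prop:stabilityOfOpenClasses} implies that any open class, and $O$ in particular, is of the form $\R^d\times V$ for some $V\subset\{-1,1\}^d$. Since $O$ contains $(0,\eta)$ for every $\eta\in\{-1,1\}^d$, we must have $V=\{-1,1\}^d$, so $O=E$. Because equivalence classes partition $E$, this forces $O$ to be the unique class, which is exactly the statement of the theorem; reachability $(x,\theta)\leadsto(y,\eta)$ between arbitrary points then follows since they all lie in the same class.

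There is essentially no new obstacle at this step: the two genuinely difficult pieces---constructing admissible trajectories between nearby points via a Gaussian comparison around the local minimum, and ruling out non-trivial spatial partitions among open classes---have already been handled in the preceding subsections. The role of GC\ref{ass:GC1} is exactly to guarantee full flippability globally, while the nondegenerate local minimum provides a single ``seed'' class which the structural result of Proposition \ref{prop:stabilityOfOpenClasses} is then forced to spread to the whole of $E$.
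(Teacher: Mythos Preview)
Your proposal is correct and follows essentially the same route as the paper: use Lemma~\ref{lem:nondegeneratelocalminimum} to produce a non-singleton equivalence class containing all velocities at the local minimum, invoke Lemma~\ref{lem:alternativeForClasses} to conclude the class is open, then combine Proposition~\ref{prop:full_flip} (full flippability from GC\ref{ass:GC1}) with Proposition~\ref{prop:stabilityOfOpenClasses} to force this class to be all of $E$. The only cosmetic difference is that the paper phrases the seed as an open set $\mathcal U\times\{-1,1\}^d$ rather than the single fibre $\{0\}\times\{-1,1\}^d$, but this is immaterial.
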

\begin{proof}
  By the local minimum approximation result (Lemma~\ref{lem:nondegeneratelocalminimum}), we know that there exists an 
  open set $\mathcal{U}$ such that all points in $\mathcal{U}\times\{-1,1\}^d$
  are in the same equivalence
  class, say $O$. By Lemma~\ref{lem:alternativeForClasses}, $O$ must then be open. 
  Since the potential $U$ goes to infinity, the process is fully flippable by Proposition~\ref{prop:full_flip}, 
  so we may apply Proposition~\ref{prop:stabilityOfOpenClasses} to see that $O$ consists of
  copies of $\dR^d$. Since $O$ contains $\mathcal{U}\times\{-1,1\}^d$, it 
  follows that $O = E$. 
\end{proof}

\section{Ergodicity and exponential ergodicity}
\label{sec:probabilistic}

To prove ergodicity and exponential ergodicity, we will use standard results 
from \cite{MeynTweedie1992,MT2,Twe94,MT09,DMT95}. 
In order to show that they apply, we need to check a certain number
of properties of the process. Some of these properties (aperiodicity, irreducibility)
are analogues in the continuous time and continuous space setting of classical notions for Markov chains. 
In order to guarantee that the process does not behave too wildly with 
respect to the topology of the ambient space, Meyn and Tweedie have also 
introduced the notion of $T$-processes (where $T$ stands for ``topology''). 
We will first recall these here, phrased in terms of a general Markov process $(Z_t)$ taking values in a space $E$, for completeness.
For a more detailed overview of these notions, we refer to the aforementioned papers, in particular \cite{MT2}, 
and the reference book \cite{MT09}. 

For a given measure $\psi$, a process is \emph{$\psi$-irreducible} if for any starting point
$z$ and any set $A$ of positive $\psi$-measure, $\esp[z]{\int_0^\infty \ind{A}(Z_t) \, dt}>0$. 
It is a \emph{$T$-process} if there exists a probability distribution $a$ on $\dR_+$ 
and a kernel $K(z,A)$
such that for fixed $A$, $z\mapsto K(z,A)$ is lower semi-continuous,
and for fixed $z$, $K(z,E)>0$ and we have the lower bound:
\[\int \prb[z]{Z_t \in A} \, da(t) \geq K(z,A).\]
A measurable set $C \subset E$  is called \emph{petite} if there exists a probability distribution 
$a$, a constant $c>0$ and a nontrivial measure $\nu$ on $E$ such that 
\[ \int \prb[z]{Z_t \in \cdot} \, da(t) \geq c\nu(\cdot) \quad \text{for all $z \in C$}.\]
An irreducible process is called \emph{aperiodic} if there 
exists a petite set $C$ and a time $T$ such that $\prb[z]{Z_t\in C} >0$
for all starting points $z\in C$ and all times $t\geq T$. 
The process is called \emph{Harris recurrent} if, for some $\sigma$-finite measure $\varphi$, $\P_z\left[ \int_0^{\infty} \1_{A}(Z_t) \, d t= \infty \right] \equiv 1$ whenever $\varphi(A) > 0$. As discussed in \cite{MT2}, Harris recurrence implies existence of a unique (up to constant multiples) invariant measure. If, moreover, there is a finite invariant measure (which in this paper is always the case by assumption~\eqref{eq:integrability-potential}), the process is called \emph{positive Harris (recurrent)}.

In the next sections, we  establish that the zigzag process is
in fact an irreducible, aperiodic $T$-process; Section~\ref{sec:Lyapunov}
is devoted to finding a suitable Lyapunov function.

\subsection{Continuous components}
In this section we give two results on the existence of an absolutely 
continuous component in the distribution of the position of the process. 
We start with an easy result, expressed in terms of a certain stopping time. 
\begin{lemma}[Absolute continuity from jumps]
  \label{lem:abs_cont_jumps}
  Let $(T_i)$ be the random times where the components of the velocity switch. 
  Let $N$ be the random integer such that $T_N$ is the first time when 
  $d-1$ components have switched; let $N=\infty$ if this does not occur.
  Let $\tau = T_{N+1}$ if $T_N$ is finite, and $\tau=\infty$ otherwise.

  Then the distribution of $X_\tau$ (conditionally on $\tau < \infty$) is absolutely continuous with respect 
  to the Lebesgue measure~: if $B$ is a Borel set in $\dR^d$ of Lebesgue
  measure zero, then 
  \[ \prb{ \tau<\infty, X_\tau \in B} = 0.\]
\end{lemma}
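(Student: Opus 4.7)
The plan is to decompose the event $\{\tau<\infty,\ X_\tau\in B\}$ according to the number $N$ of switches and the sequence of switched indices, and on each piece to exhibit $X_\tau$ as an affine surjective function of the waiting times---whose joint law is itself absolutely continuous. The result will then follow from a Fubini-type argument and a countable union.

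First I would fix an integer $n\geq d-1$ and an admissible sequence $\mathbf{j}=(j_1,\dots,j_n)\in\{1,\dots,d\}^n$ satisfying $|\{j_1,\dots,j_n\}|=d-1$ and $|\{j_1,\dots,j_{n-1}\}|=d-2$, and work on the event
\[
  \mathcal{A}_{n,\mathbf{j}} := \{N=n,\ (i_1,\dots,i_n)=\mathbf{j},\ T_{n+1}<\infty\}.
\]
On this event the successive velocities $v_0=\theta$ and $v_k=F_{j_k}\cdots F_{j_1}\theta$ for $k=1,\dots,n$ are deterministic. Since $v_k-v_{k-1}=\pm 2\, e_{j_k}$, a simple induction gives
\[
  \mathrm{span}\{v_0,v_1,\dots,v_n\} \;=\; \mathrm{span}\{v_0\}+\mathrm{span}\{e_j : j\in\{j_1,\dots,j_n\}\}.
\]
Let $j^\star$ be the unique missing coordinate. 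Since $v_0\in\{-1,1\}^d$, one has $(v_0)_{j^\star}=\pm 1\neq 0$, so $v_0$ does not lie in $\mathrm{span}\{e_{j_1},\dots,e_{j_n}\}$, and the above span has full dimension $d$.

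Writing $s_k := T_{k+1}-T_k$, on $\mathcal{A}_{n,\mathbf{j}}$ the position at time $\tau$ reads
\[
  X_\tau \;=\; x+\sum_{k=0}^n s_k v_k \;=:\; \Psi(s_0,\dots,s_n),
\]
where $\Psi:\R_+^{n+1}\to\R^d$ is affine and, by the previous step, surjective. From the standard PDMP construction (see, e.g., \cite{Mal15,ABGKZ12}), conditional on $\mathcal{A}_{n,\mathbf{j}}$ the waiting times $(s_0,\dots,s_n)$ admit a density with respect to Lebesgue measure on $\R_+^{n+1}$: the conditional joint likelihood writes as a product of switching rates at the jump points times the exponential of the integrated total rate along the piecewise linear path. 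If $B\subset\R^d$ is Lebesgue-null, then $\Psi^{-1}(B)$ is Lebesgue-null in $\R^{n+1}$: pick $d$ indices $k_0<\dots<k_{d-1}$ such that $v_{k_0},\dots,v_{k_{d-1}}$ are linearly independent, observe that for each fixed value of the remaining $n+1-d$ coordinates the restriction of $\Psi$ to the chosen ones is an invertible affine map (and hence preserves null sets), and apply Fubini. Integrating the density of $(s_0,\dots,s_n)$ over $\Psi^{-1}(B)$ yields $\prb{\mathcal{A}_{n,\mathbf{j}},\ X_\tau\in B}=0$, and summing over the countable family of pairs $(n,\mathbf{j})$ delivers the claim.

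The only mildly delicate point is justifying the conditional absolute continuity of the waiting-time law. I expect the cleanest justification to come directly from the Poissonian description of the PDMP skeleton given in Section~\ref{sec:preliminaries}: once the jump sequence is prescribed, the joint law of the jump times is explicitly absolutely continuous on the corresponding simplex, with density expressible in terms of the switching rates $\lambda_{i_k}$. Everything else is linear algebra and Fubini.
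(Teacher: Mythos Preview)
Your argument is correct and shares the same skeleton as the paper's proof: decompose by the sequence of switched indices, observe that once $d-1$ distinct components have been flipped the successive velocities $v_0,\dots,v_n$ span $\R^d$ (your span identity $\mathrm{span}\{v_0,\dots,v_n\}=\mathrm{span}\{v_0\}+\mathrm{span}\{e_{j_1},\dots,e_{j_n}\}$ is exactly the point), and then use absolute continuity of the inter-switch times to conclude.

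The technical realization differs. The paper first truncates to $\{\tau\leq t\}$ and then uses a Poisson \emph{thinning} representation: it runs a homogeneous Poisson clock at a dominating rate $\overline\lambda d$, proposes a uniformly chosen index at each ring, and accepts or rejects with the appropriate probability. In that picture the interarrival times are i.i.d.\ exponentials, so absolute continuity is automatic, and the decomposition is over sequences in $\{0,1,\dots,d\}^m$ with $0$ standing for a rejected proposal. You instead decompose over the \emph{actual} switch indices in $\{1,\dots,d\}^n$ and appeal to the explicit PDMP skeleton density for $(s_0,\dots,s_n,i_1,\dots,i_{n+1})$. This is equally valid and arguably more direct; the thinning route trades the need to write down that density for an auxiliary index and a preliminary truncation. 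One very minor point: your side condition $|\{j_1,\dots,j_{n-1}\}|=d-2$ should be read as vacuous when $d=1$ (where $N=0$ and $\tau=T_1$), which the paper handles by a separate one-line remark.
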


In particular, in case $d = 1$, then $N = 0$, $T_N = 0$ and $\tau$ is the time of the first switch.
\begin{proof}

 Let $B$ be a set of zero Lebesgue measure in $\dR^d$ and 
$t$ be arbitrary. It is enough to show that $\prb[(x,\theta)]{\tau \leq t ;  X_\tau \in B} = 0$, 
since this implies $\prb[(x,\theta)]{X_\tau \in B,\tau < \infty} = 0$ by monotone convergence. 

It is well known (see \cite{Benaim2015, BierkensFearnheadRoberts2016}) that the 
law of $(X_t,\Theta_t)$ may be obtained by a thinning procedure. More precisely, 
let $\overline{\lambda}$ be an upper bound on the switching rates up to time~$t$
(such a bound exists since the process has finite speed and the switching rates 
are continuous). Then the process may be constructed on $[0,t]$ by running a Poisson 
clock with intensity $\overline{\lambda}d$, and, for each Poisson event, 
picking an index $i$ uniformly, then accepting or rejecting the flip of the corresponding
component of the velocity with a probability given by $\lambda_i(x,\theta)/\overline{\lambda}$. 

Recall that $F_{i_1,...,i_k}\theta$ is the velocity obtained from $\theta$ by flipping, 
possibly many times, the components appearing in the sequence. For convenience, we 
extend this definition to allow zero values in the index sequence, which corresponds to no 
flipping.  This allows us to write
 \[
   X_\tau = x + E_1\theta + E_2 F_{I_1} \theta + ... + E_{M+1} F_{I_1,...,I_m} \theta, 
   \]
   where $M$ is a random integer (larger than $N$),  the $(I_k)$ take values 
   in $\{0,1,...,d\}$ with $I_k=j$ for $j\neq 0$ indicating a proposed and accepted $j$ flip, while $I_k=0$ corresponding to all rejected flips, and the $(E_i)$ are the interarrival times
  of the Poisson clock.  We decompose over all possible index sequences:
   \[\begin{split}
     \prb{\tau \leq t, X_\tau \in B} 
     = \sum_{m\in \dN_0} \sum_{(i_1,...,i_m)\in\{0,...d\}^m} 
     \prb{\tau \leq t, M=m, (I_1,...,I_M) = (i_1,...,i_m), 
       \right.\\
       \left. (x + E_1 \theta + \cdots + E_{m+1} F_{i_1,...,i_m}\theta) \in B}.
     \end{split}
     \]
    If $M=m$, $N\leq m$ so by definition, at least $d-1$ different (non-zero) indices must appear in 
    the sequence $(i_1,...,i_m)$, and
   \begin{align*}
     &\prb{\tau \leq t, X_\tau \in B} \\
     &\leq \sum_{m\in \dN} 
     \sum_{
       \substack{(i_1,...,i_m)\in\{0,...d\}^m\\
	 \text{$d-1$ indices appear in $(i_1,...,i_m)$}
      }}
     \prb{(x + E_1 \theta + \cdots + E_{m+1} F_{i_1,...,i_m}\theta) \in B}.
    \end{align*}
    For each term in the sum, the vectors $(\theta,F_{i_1}\theta,...,F_{i_1,...,i_m} \theta)$ 
    span $\dR^d$, so the distribution of $x+E_1\theta + \cdots + E_{m+1} F_{i_1,...,i_m}\theta$ 
    is absolutely continuous, and the probability that it falls in the 
    set $B$ is zero. 
\end{proof}

The proof of the existence of an absolutely continuous component at 
a fixed time is a bit more involved, but is the key ingredient to prove that 
the process behaves nicely. 

\begin{lemma}[Continuous component]
\label{lem:continuous_component}
If $(x,\theta) \fullyleadsto  (y,\eta)$ then there exist open sets $U$ and $V$,
with $x \in U$ and $y \in V$, and constants $\varepsilon > 0$, $t_0 > 0$, $c >
0$, such that for any $x' \in U$, and all $t \in (t_0, t_0 + \varepsilon]$,
\[
  \prb[x',\theta]{X_t \in \cdot, \Theta_t = \eta} \geq  c \cdot \mathrm{Leb}(\cdot \cap V).
\]
\end{lemma}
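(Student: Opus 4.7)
The plan is to lower bound the distribution of $X_t$ by following closely the admissible control sequence given by the hypothesis $(x,\theta)\fullyleadsto (y,\eta)$, letting the switching times vary in a small box of $\R^m$. Let $\mathbf{u}^* = (t_0^*,\dots,t_m^*; i_1,\dots,i_m)$ be such a sequence, so that $\Phi_{\mathbf u^*}(x,\theta) = (y,\eta)$ and every index of $\{1,\dots,d\}$ appears in $(i_1,\dots,i_m)$; set $\tau_k^* = \sum_{j<k} t_j^*$ and $T^* = \tau_{m+1}^*$. By a standard PDMP calculation (or directly from the thinning construction used in the proof of Lemma~\ref{lem:abs_cont_jumps}), the law of the switching times on the event ``exactly $m$ switches occur in $[0,t]$, at times $s_1<\dots<s_m$, in coordinates $i_1,\dots,i_m$ in this order'' has density
\[
p(s_1,\dots,s_m;t;x') = \PAR{\prod_{k=1}^m \lambda_{i_k}(X(s_k),\Theta(s_k))}\exp\PAR{-\int_0^t \sum_j \lambda_j(X(r),\Theta(r))\,dr},
\]
where $(X,\Theta)$ is the deterministic trajectory starting at $(x',\theta)$ and switching according to the data $(s_k, i_k)$. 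Admissibility of $\mathbf u^*$ makes each factor $\lambda_{i_k}$ strictly positive at the nominal configuration $(x, (\tau_k^*), T^*)$; combined with continuity of the rates and of the flow in its parameters, this yields a neighborhood $U$ of $x$, a box $W = \prod_k (\tau_k^*-\delta,\tau_k^*+\delta) \subset \R^m$ (with $\delta$ small enough that $W$ sits in the simplex $0<s_1<\cdots<s_m<t$ for all $t$ in an open interval $I$ around $T^*$), and a constant $c_0>0$ such that $p \geq c_0$ on $U\times W\times I$.

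The crux is then a change of variables exploiting full flippability. Writing
\[
X(t) = x' + \sum_{k=0}^m (s_{k+1}-s_k)F_{i_1,\dots,i_k}\theta,\qquad s_0=0,\ s_{m+1}=t,
\]
one computes $\partial X(t)/\partial s_j = F_{i_1,\dots,i_{j-1}}\theta - F_{i_1,\dots,i_j}\theta = \pm 2 e_{i_j}$. Because every index appears among $(i_1,\dots,i_m)$, one can select $d$ distinct positions $k_1<\dots<k_d$ such that $\{i_{k_1},\dots,i_{k_d}\} = \{1,\dots,d\}$. Fixing $t$ and the remaining $s_j$ at their nominal values, the map $(s_{k_l})_{l=1}^d \mapsto X(t)$ is then an affine bijection of $\R^d$ whose Jacobian is the matrix with columns $\pm 2 e_{i_{k_l}}$, of determinant $\pm 2^d$. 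Decomposing $W=W'\times W''$ along these two groups of coordinates and applying Fubini together with the change of variables formula, one obtains, for any Borel $B\subset \R^d$,
\[
\int_W \1\{X(t)\in B\}\,ds_1\cdots ds_m \;\geq\; 2^{-d}\,\mathrm{Leb}(W'')\cdot \mathrm{Leb}(B\cap V_t(x')),
\]
where $V_t(x')$ is the open image of $W'$, a box of fixed size centered at the nominal endpoint $y^*(t;x')$ obtained by applying to $x'$ the control sequence with switching times $(\tau_k^*)_{k\notin\{k_l\}}$, free times $(s_{k_l})$, and total duration $t$.

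To conclude, combining the density lower bound with the previous inequality yields
\[
\prb[x',\theta]{X_t\in B,\Theta_t=\eta} \;\geq\; c_0 \cdot 2^{-d}\,\mathrm{Leb}(W'')\cdot \mathrm{Leb}(B\cap V_t(x')).
\]
Since $y^*(t;x')$ depends continuously on $(x',t)$ and equals $y$ at $(x,T^*)$, we may shrink $U$, $W$ and $I$ so that $V_t(x')$ contains a common open set $V\ni y$ uniformly in $(x',t)\in U\times I$, and then pick $t_0$ and $\varepsilon>0$ with $(t_0,t_0+\varepsilon]\subset I$. The main obstacle is the change of variables step: it is precisely the full flippability assumption that makes the vectors $(\partial X/\partial s_j)$ span $\R^d$, and without it the image of the map $(s_k)\mapsto X(t)$ would be contained in an affine subspace, preventing any absolutely continuous lower bound with respect to Lebesgue measure on $\R^d$.
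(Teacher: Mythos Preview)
Your approach is sound and in fact more self-contained than the paper's. The paper obtains the lower bound via the Poisson thinning representation (so the switching times become independent uniforms) and then invokes a ``uniform submersion lemma'' from \cite{Benaim2015} to push forward Lebesgue measure through the full-rank map $\Psi(x,t,\cdot)$. You work instead with the explicit PDMP density of the ordered switching times, and replace the abstract submersion argument by an explicit affine change of variables in $d$ hand-picked coordinates. This buys you an elementary proof that does not rely on an external lemma; the paper's route is shorter on the page but hides the linear algebra inside the cited result.

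There is one genuine imprecision in your Fubini step. You write
\[
\int_W \1\{X(t)\in B\}\,ds_1\cdots ds_m \;\geq\; 2^{-d}\,\mathrm{Leb}(W'')\cdot \mathrm{Leb}(B\cap V_t(x')),
\]
with $V_t(x')$ defined as the image of $W'$ when the non-selected times are fixed at their \emph{nominal} values $\tau_k^*$. But for a generic $s''\in W''$ the image of $W'$ is a box of the \emph{same} size as $V_t(x')$, merely translated (since $\partial X(t)/\partial s_j=\pm 2 e_{i_j}$ for every $j$, selected or not). Two translated boxes of equal size need not contain one another, so the displayed inequality fails as stated. The fix is easy: choose the half-widths of $W''$ strictly smaller than those of $W'$ (or equivalently shrink $V_t(x')$ to, say, half its side-length). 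Then the translate of the image box as $s''$ ranges over $W''$ is controlled, and all these translates contain a common open box around $y$. With this adjustment the rest of your argument goes through unchanged.
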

\begin{remark} Similar results may be found in previous 
  works, e.g.  \cite[Lemmas 2 and 3]{BakhtinHurth2012}, or \cite[Section
  6.5]{Benaim2015}. In order to get the probabilistic consequences, we need the
  uniformity in the starting point that appears in \cite{Benaim2015}.  Since
  our hypotheses here are slightly different, we include a proof for the sake
  of completeness. 
  We also note that taking canonical switching rates leads to a degenerate
  situation where the local Hörmander type criteria of~\cite{BakhtinHurth2012,Benaim2015} do not apply. 
\end{remark}

\begin{proof}
By hypothesis there exists an admissible deterministic control sequence $\bu = (\bt, \bi) = (t_0,...,t_m;i_1,...,i_m)$, 
such that all indices occur at least once in $\bi$, and $\Phi_\bu(x,\theta) = (y,\eta)$. 
Recall the notation $\tau_k = \sum_{j=0}^{k-1} t_j$ and let  $t= \tau_{m+1} =  \sum_{k=0}^m t_k$
be the final time of the trajectory. 

We use the same thinning construction as in the proof of Lemma~\ref{lem:abs_cont_jumps} above, 
with a Poisson clock of intensity $\overline{\lambda} d$, where $\overline{\lambda}$ 
is an upper bound on the switching rates up to time $t$. 

For $j=1,...,(m-1)$, let $\cU_j$ be a bounded neighbourhood of $\tau_j$; we may assume that 
the $\cU_j$ do not intersect and, by continuity, that the control sequences
$(\bs,\bi) = (s_0,...,s_{m-1},\bi)$ satisfy $\lambda_{\min}(\bs,\bi) \geq \underline{\lambda} >0$ 
for any $\bs$ such that $\sum_{l=0}^{j-1} s_l \in \cU_j$  for all $j$.

Now let $f$ be an arbitrary non-negative test function. Let $A$ be the event
that $m$ Poisson events $T_1,...,T_m$ occur before time $t$, that 
$T_j\in \cU_j$ for all $j$, that the indices are picked as in $\bi$, and that all proposed switches are accepted. Then
\begin{align*}
  \esp{f(X_t,\Theta_t)} 
  &\geq \esp{ f(X_t,\Theta_t) \ind{A}} \\
  &\geq \esp{ f( \Psi(x,t,T_1,...,T_m)) \ind{A}}
\end{align*}
where the mapping $\Psi$ is defined by 
\[ 
  \Psi(x,t,\tau_1,...,\tau_m) = x + \tau_1 \theta + (\tau_2 - \tau_1)F_{i_1}\theta + \cdots 
  + (t- \tau_m) F_{i_1\cdots i_m} \theta. 
\]
Since the choice of indices to switch and the acceptance/rejection tests are
independent from the Poisson process, we get by conditioning:
\begin{align*}
  \esp{f(X_t,\Theta_t)} 
  &\geq \PAR{\frac{\underline{\lambda}}{\overline{\lambda} d}}^m
  \esp{ f( \Psi(x,t,T_1,...,T_m))\ind{\text{$m$ events occur}} \prod_{j=1}^m  \ind{T_j \in \cU_j}}. 
\end{align*}
Using classical properties of the Poisson process, this implies that for some positive
constant $c$, 
\begin{equation}
  \label{eq:reusable_lb}
  \esp{f(X_t,\Theta_t)} 
  \geq c \esp{f(\Psi(x,t,U_1,...,U_m))} 
\end{equation}
where the $U_j$ are independent and $U_j$ is uniformly distributed on $\cU_j$. 

The partial map $(u_1,...,u_m)\mapsto \Psi(x,t,u_1,...,u_m)$ has full rank: 
indeed, the image of its differential is spanned by the vectors 
\[
  (\theta - F_{i_1}\theta, ..., F_{i_1\cdots i_{m-1}}\theta  - F_{i_1\cdots i_m} \theta)
  = (\pm 2 e_{i_1}, ..., \pm 2e_{i_m})
\]
who span $\dR^d$ since all indices in $\{1,...,d\}$  appear at least once 
in the sequence $\bi$. This shows that $\Psi(x,t,\cdot)$ is a submersion.
It follows that, 
$\Psi(x,t,\cdot)$  pushes the uniform distribution on $\prod \cU_j$ to a 
measure which is absolutely continuous with respect to the Lebesgue measure, on 
an open set containing $y = \Phi_\bu(x,\theta)$ (see  \cite[Lemma 2 and 3]{BakhtinHurth2012}, \cite[Section 6]{Benaim2015}
for related results and details). This proves a restricted 
form of the lemma, for the single starting point $x$ and the single time $t$.

To prove the uniform version, we see $x$ and $t$ as a parameter and apply the uniform submersion 
lemma \cite[Lemma 6.3]{Benaim2015} to get the result. 
\end{proof}

\subsection{Non-evanescence}
For classical Markov chains on countable spaces, it is well known
that for any $x$ and $y$, the following equivalence holds:
\[
  \esp[x]{ \sum_n \ind{X_n = y}} = \infty \iff \sum_n\ind{X_n = y} = \infty, \quad \P_x - \text{a.s.}\]
For general chains and processes, this equivalence is no longer true: starting 
from a point $x$, the time spent in a set $A$
may be finite with positive probability, even when its expectation is infinite. This 
may essentially happen if the process has a positive probability of escaping to 
infinity when it starts in a particular set: this canonical counter-example 
is explained e.g. in \cite[Section 9.1.2]{MT09}. 

This equivalence is used to prove that a (classical) irreducible chain that 
admits an invariant probability measure is positive recurrent. To obtain the natural 
property of Harris recurrence for a general chain, ($\psi$-)irreducibility and the 
existence of the invariant probability are not enough, and we need to show additionally 
that the escaping to infinity does not happen. 

In the context of the zigzag process, we refer to the `ridge', Example~\ref{ex:ridge} in Section~\ref{sec:examples}, which describes a smooth potential function with the property that for certain initial conditions the zigzag process will escape to infinity with full probability.

  \begin{definition}[Non-evanescence]
A point $(x,\theta)$ is said to be \emph{non-evanescent} if $\prb[x,\theta]{\abs{X_t}\to \infty} =0$. 
It is \emph{weakly non-evanescent} if this probability is strictly less than $1$. 
\end{definition}

We start by showing how the deterministic statements on flippability may be used 
to prove probabilistic non-evanescence properties. 

\begin{remark}
  [There are infinitely many switches]
  \label{rem:always_switch}
  Note that the first growth condition $ U \to \infty$ already has the 
  probabilistic consequence that the process switches infinitely often. 
 Indeed, for any $(x,\theta)$ and any $n$, 
 \begin{align*}
  \prb[(x,\theta)]{\text{no switch before time $n$}}
  & = \exp\left( -\int_0^{n} \sum_{i=1}^d (\theta_i \partial_i U(x+ \theta s))_+  \, d s\right) \\
  & \leq \exp\left( -\int_0^{n} \sum_{i=1}^d \theta_i \partial_i U(x+ \theta s)  \, d s\right) \\
  & = \exp\left( -U( x + \theta n) + U(x) \right) \rightarrow 0 \quad \text{as $n \rightarrow \infty$}, 
 \end{align*}
 so $\prb[(x,\theta)]{T^1<\infty} = 1$, where $(T^i)$ are the switching times as introduced in Section~\ref{sec:preliminaries}. By the strong Markov property, 
 this implies for all $k$
 \begin{align*}
   \prb[(x,\theta)]{T^{k+1} < \infty} 
   &= \esp[(x,\theta)]{ \ind{T^k<\infty}\prb[(X_{T_k},\Theta_{T^k})]{T^1<\infty}} 
   = \prb[(x,\theta)]{T^k < \infty}, 
 \end{align*}
 proving the claim by recurrence. 
\end{remark}

\begin{lemma}[Two weak versions of non-evanescence]
\label{lem:weakNonEvanescence}
If the invariant measure $\pi$ is a probability measure (as it is assumed to be in this paper), then $\pi$-almost all points are non-evanescent. 

  If additionally the process is fully flippable in the sense of Definition~\ref{def:full_flip},
  then all points are weakly non-evanescent. 
  \end{lemma}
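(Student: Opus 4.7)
For the first claim, my plan is to leverage stationarity in the standard way. Set $f(x,\theta) = \P_{(x,\theta)}(|X_t|\to\infty)$; this is a measurable function on $E$ and, by the Markov property, the event $\{|X_t|\to\infty\}$ is shift-invariant in the sense that it equals $\{|X_{t+s}|\to\infty\}$ for every fixed $s$. Under the stationary law $\P_\pi$, for every $R>0$,
\[
\{|X_t|\to\infty\}\subset \liminf_{t\to\infty}\{|X_t|>R\},
\]
so Fatou's lemma together with stationarity gives
\[
\P_\pi(|X_t|\to\infty)\leq \liminf_{t\to\infty}\P_\pi(|X_t|>R)=\pi(\{|x|>R\}),
\]
and letting $R\to\infty$ we conclude $\P_\pi(|X_t|\to\infty)=0$. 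Integrating against $\pi$ yields $\int f\,d\pi=0$, so $f=0$ $\pi$-a.s., which is the first statement.

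For the second claim, fix an arbitrary $(x,\theta)\in E$. By full flippability there exists $(y,\eta)$ with $(x,\theta)\fullyleadsto(y,\eta)$. Apply Lemma~\ref{lem:continuous_component} to obtain open sets $\cU\ni x$, $V\ni y$, a time $t>0$, and a constant $c>0$ such that
\[
\prb[(x,\theta)]{(X_t,\Theta_t)\in B\times\{\eta\}}\geq c\,\Leb(B\cap V)
\]
for every Borel $B\subset\R^d$. Let $N\subset E$ denote the set of non-evanescent points; the first part of the lemma tells us $\pi(N)=1$. Since the marginal density of $\pi$ on $\R^d$ is $\exp(-U)/Z>0$, the measure $\pi$ and the Lebesgue measure on $\R^d\times\{-1,1\}^d$ are mutually absolutely continuous on every bounded region, hence $N_\eta:=\{x'\in\R^d:(x',\eta)\in N\}$ has full Lebesgue measure; in particular $\Leb(V\cap N_\eta)=\Leb(V)>0$.

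Now decompose via the Markov property at time $t$: using that evanescence from time $0$ coincides with evanescence from time $t$,
\[
\prb[(x,\theta)]{|X_s|\to\infty}=\esp[(x,\theta)]{f(X_t,\Theta_t)}
\leq 1-\prb[(x,\theta)]{(X_t,\Theta_t)\in N}.
\]
The continuous-component bound gives $\prb[(x,\theta)]{(X_t,\Theta_t)\in V\cap N_\eta\times\{\eta\}}\geq c\,\Leb(V)>0$, so the right-hand side is strictly less than $1$, proving weak non-evanescence. The only delicate point is the measure-theoretic bookkeeping that lets one conclude $f=0$ Lebesgue-a.e. on $V\times\{\eta\}$ from the $\pi$-a.e.\ statement; this is the main step to execute carefully, but it reduces to the positivity of the invariant density. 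Everything else is a direct assembly of the reachability and continuous-component results already proved.
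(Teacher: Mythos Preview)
Your proof is correct. The first part is essentially identical to the paper's argument: both use Fatou's lemma and stationarity to bound $\P_\pi(|X_t|\to\infty)$ by $1-\pi(K)$ for every compact $K$, then let $K$ exhaust $\R^d$.

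For the second part your route differs from the paper's. The paper uses the stopping time $\tau$ of Lemma~\ref{lem:abs_cont_jumps} (the first switch after $d-1$ distinct components have flipped): the law of $X_\tau$ on $\{\tau<\infty\}$ is absolutely continuous, full flippability gives $\prb[(x,\theta)]{\tau<\infty}>0$, and the strong Markov property at $\tau$ yields the reusable inequality
\[
\prb[(x,\theta)]{|X_t|\not\to\infty}\ \geq\ \prb[(x,\theta)]{\tau<\infty}.
\]
You instead invoke the fixed-time continuous component of Lemma~\ref{lem:continuous_component} and the ordinary Markov property at a deterministic time. Both arguments exploit the same mechanism (an absolutely continuous piece of the law lands in the full-measure set of non-evanescent points), but via different lemmas. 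Your route is slightly cleaner to state, at the cost of relying on the more technical Lemma~\ref{lem:continuous_component}; the paper's route uses only the lighter Lemma~\ref{lem:abs_cont_jumps} and produces the explicit lower bound above, which is reused verbatim in the proof of Lemma~\ref{lem:frozen}.

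Two cosmetic remarks. In part one, the sentence ``Integrating against $\pi$ yields $\int f\,d\pi=0$'' is redundant: $\P_\pi(|X_t|\to\infty)$ already \emph{is} $\int f\,d\pi$ by disintegration. In part two, the passage from $\pi(N)=1$ to $\Leb(V\setminus N_\eta)=0$ is exactly the positivity of the density $\exp(-U)$, as you note; there is nothing further to execute there.
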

  \begin{proof}
    The first statement is classical.  For the sake of completeness
    we include  a proof. Let $K$ be a compact set. Since $\liminf_{t \rightarrow \infty} \ind{X_t\notin K} = \{ X_t \text{ eventually leaves $K$}\}$, 
    we have by Fatou's lemma
    \[ \prb[\pi]{ X_t \text{ eventually leaves $K$}} \leq \liminf_{t \rightarrow \infty} \prb[\pi]{X_t \notin K} = 1 - \pi(K). \]
    Since $\{ \abs{X_t}\to \infty\} = \bigcap_{K} \{ X_t \text{ eventually leaves $K$}\}$, we are done 
   since $\{\pi\}$ is tight.

   Let us now prove the second statement. Let $\cN$ be the set of non-evanescent points: 
   this set has full $\pi$-measure, so its complement is Lebesgue negligible. 
   Let $(x,\theta)$ be an arbitrary starting point, 
   and consider the stopping time $\tau$ introduced in Lemma~\ref{lem:abs_cont_jumps}. 
   By the strong Markov property,
   \begin{align*}
     \prb[(x,\theta)]{ \abs{X_t} \text{ does not go to infinity}}
     &\geq
     \prb[(x,\theta)]{ \tau < \infty, \abs{X_t} \text{ does not go to infinity}} \\
     &= \esp[(x,\theta)]{ \ind{\tau<\infty} \prb[(X_\tau,\Theta_\tau)]{ \abs{X_t} \text{ does not go to infinity}}} \\
     &\geq \esp[(x,\theta)]{ \ind{\tau<\infty} \ind{X_\tau \in \cN}}. 
   \end{align*}
   Since $\dR^d\setminus \cN$ is Lebesgue negligible, $\prb[(x,\theta)]{ \tau<\infty, X_\tau \notin \cN} = 0$, so  
   \begin{equation}
     \label{eq:lower_bound_no_escape}
     \prb[(x,\theta)]{ \abs{X_t} \text{ does not go to infinity}} \geq \prb[(x,\theta)]{\tau<\infty}.
   \end{equation}
   If the process is fully flippable, this last probability is positive, proving the
   weak non-evanescence property. 
  \end{proof}

  If we add a slightly stronger hypothesis on the growth of the potential at infinity, namely Growth Condition~\ref{ass:GC2},
  we get a stronger non-evanescence result. We start by saying that if the process
  is evanescent, it must go to infinity in a very particular way, by staying forever in 
  an affine subspace.
\begin{lemma}[Two frozen directions]
  \label{lem:frozen}
  Let $d\geq 2$.  
  Suppose that there exists an invariant probability measure, and 
 that $(x,\theta)$ satisfies
  $\prb[(x,\theta)]{\abs{X_t} \to \infty} >0$. Then 
  there exist two indices $i$ and $j$ such that 
  \[ \prb[(x,\theta)]{ \text{the $i$\textsuperscript{th} and $j$\textsuperscript{th}
	components never switch}} >0.\]
\end{lemma}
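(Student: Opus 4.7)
The plan is to push the argument from the proof of Lemma~\ref{lem:weakNonEvanescence} one step further. That argument gives
\[
  \prb[(x,\theta)]{|X_t|\text{ does not go to }\infty} \geq \prb[(x,\theta)]{\tau<\infty},
\]
where $\tau = T_{N+1}$ is the stopping time of Lemma~\ref{lem:abs_cont_jumps}, so the hypothesis will yield $\prb[(x,\theta)]{\tau = \infty} > 0$. I will split this event into \emph{Case~A}, $\{N = \infty\}$, in which strictly fewer than $d-1$ distinct components are ever switched and therefore at least two of them are frozen; and \emph{Case~B}, $\{N < \infty,\ T_{N+1} = \infty\}$, in which exactly $d-1$ distinct components switch and then the trajectory runs in a fixed direction forever. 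On Case~A the conclusion follows by pigeonhole over the pairs of frozen indices, so the main task is to show $\prb{\mathrm{Case~B}} = 0$, which will force $\prb{\mathrm{Case~A}} = \prb{\tau = \infty} > 0$.

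To handle Case~B, write $p^{\mathrm{NS}}(y,\eta) := \prb[(y,\eta)]{\text{no switch in }[0,\infty)}$ and $B_\eta := \{y \in \R^d : p^{\mathrm{NS}}(y,\eta) > 0\}$. The event ``no switch'' forces $X_t = y + t\eta \to \infty$, so $\bigcup_\eta (B_\eta\times\{\eta\})$ sits inside the evanescent set, which by Lemma~\ref{lem:weakNonEvanescence} is $\pi$-null; as $\pi$ has a positive density, each $B_\eta$ has Lebesgue measure zero in $\R^d$. A crucial structural observation is that $B_\eta$ is invariant under translation by $\eta$ (the switching-rate integral along a ray depends only on the ray itself), so $B_\eta$ is a union of whole $\eta$-rays and its intersection with any affine hyperplane transverse to $\eta$ has $(d-1)$-dimensional Lebesgue measure zero. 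The strong Markov property at $T_N$ gives
\[
  \prb[(x,\theta)]{\mathrm{Case~B}} = \esp[(x,\theta)]{\ind{T_N<\infty}\,p^{\mathrm{NS}}(X_{T_N},\Theta_{T_N})},
\]
so it is enough to check that $\prb[(x,\theta)]{T_N<\infty,\ X_{T_N}\in B_{\Theta_{T_N}}} = 0$.

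The main obstacle is that $X_{T_N}$ is \emph{not} absolutely continuous on $\R^d$, since only $d-2$ distinct components have been switched strictly before $T_N$. I will instead run the thinning construction of Lemma~\ref{lem:abs_cont_jumps} up to the $N$-th accepted switch and, for each thinned index sequence $(i_1,\ldots,i_m)$ realising Case~B with end-velocity $\Theta_{T_N} = \eta$, bound the joint probability by the probability that $x + \sum_{j=1}^m E_j F_{i_1,\ldots,i_{j-1}}\theta \in B_\eta$, where the $E_j$ are i.i.d.\ exponentials. The latter random variable has a density on the $(d-1)$-dimensional affine subspace $x + W$, where $W = \R\theta + \mathrm{span}(e_{j_1},\ldots,e_{j_{d-2}})$ and $j_1,\ldots,j_{d-2}$ are the distinct indices appearing in the prefix. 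A short component check---the unswitched coordinate $j^\ast$ forces the $\theta$-coefficient in any decomposition of $\eta$ over $W$ to equal $+1$, while the ``new'' index $i_N\notin\{j_1,\ldots,j_{d-2}\}$ forces it to equal $-1$---shows $\eta\notin W$, so $W$ is transverse to $\eta$ and $B_\eta\cap(x+W)$ has $(d-1)$-dimensional Lebesgue measure zero. Summing over the countably many thinned sequences will give $\prb{T_N<\infty,\ X_{T_N}\in B_{\Theta_{T_N}}} = 0$, hence $\prb{\mathrm{Case~B}} = 0$, completing the plan.
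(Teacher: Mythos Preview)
Your argument is correct, and it differs substantively from the paper's. The paper argues by contraposition: if for every pair $(i,j)$ at least one of the two components switches almost surely, then $T_N<\infty$ a.s., and then---invoking Remark~\ref{rem:always_switch} (which uses $U\to\infty$) to guarantee infinitely many switches---also $\tau=T_{N+1}<\infty$ a.s., whence the bound~\eqref{eq:lower_bound_no_escape} gives non-evanescence, a contradiction. This is three lines, but it quietly uses Growth Condition~\ref{ass:GC1} through Remark~\ref{rem:always_switch}, which is not among the hypotheses of the lemma as stated (it is available in the only place the lemma is applied, Proposition~\ref{prop:SNE}). Your route instead rules out Case~B directly, without any growth hypothesis: you exploit the line-invariance of $B_\eta$ to reduce to a $(d-1)$-dimensional null set, and then a thinning decomposition shows that $X_{T_N}$, conditioned on each index sequence, has a density on the hyperplane $x+W$, which you verify is transverse to $\eta$ via the neat $c=+1$ versus $c=-1$ coordinate check. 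Two small points worth tightening in a write-up: (i) $B_\eta$ is in fact invariant under all of $\R\eta$, not just forward rays, since the extra integral over any compact back-segment is finite by continuity of the rates---you need full lines for the Fubini step; and (ii) as in Lemma~\ref{lem:abs_cont_jumps}, the thinning requires a rate bound over a finite horizon, so one should first restrict to $\{T_N\le t\}$ and let $t\to\infty$. With those in place your proof establishes the lemma under the stated hypotheses alone, at the price of a longer argument.
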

\begin{proof}
  We prove this statement by contraposition and assume that, with probability one, 
  at most one component of the velocity does not switch. This implies 
  that the time $T_N$ defined in Lemma~\ref{lem:abs_cont_jumps} is a.s. finite, 
  and since there are infinitely many switches by Remark~\ref{rem:always_switch}, 
  the time $\tau=T_{N+1}$ of the same Lemma~\ref{lem:abs_cont_jumps} is also finite. 
  Reusing the bound~\eqref{eq:lower_bound_no_escape} from the proof of Lemma~\ref{lem:weakNonEvanescence}, 
  we immediately get that $\prb[(x,\theta)]{\abs{X_t}\to \infty} = 0$, proving the lemma.
\end{proof}

  Recall that Growth Condition~\ref{ass:GC2} states, in dimension $d$, that 
  \[ \exists c> d, \exists c', \forall x, \quad U(x) \geq c\ln(1+\abs{x}) - c'.\]
\begin{proposition}[Non-evanescence]
  \label{prop:SNE}
  If the potential $U$ satisfies Growth Condition~\ref{ass:GC2} 
  then  the process is non-evanescent, 
  that is, for any $(x,\theta)\in\dR^d\times \{-1,1\}^d$, 
  \[ \prb[(x,\theta)]{ \abs{X_t} \to \infty} = 0. \]
\end{proposition}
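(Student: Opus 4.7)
The plan is to argue by contradiction. Suppose some $(x,\theta)\in E$ has
$\P_{(x,\theta)}[|X_t|\to\infty]>0$. In dimension $d\geq 2$, Lemma~\ref{lem:frozen} then produces two distinct indices $i\neq j$ for which the event $A := \{\Theta^i_t = \theta_i \text{ and } \Theta^j_t = \theta_j \text{ for all } t\geq 0\}$ has positive probability. (The case $d=1$ is a minor variant: non-evanescence then reduces to the straight-line event, along which the single-component integrated switching rate is $\int_0^\infty (\partial U(x+\theta t))_+ \,dt$, and the bound $a_+\geq a$ together with $U(x+\theta t)\to\infty$ from GC\ref{ass:GC2} makes this infinite.) Up to reflection, assume $\theta_i=\theta_j=+1$, so that on $A$ we have $X_t^i = x_i+t$ and $X_t^j = x_j+t$, and consequently $|X_t|\geq \sqrt{2}\,t-\|x\|\to\infty$.

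The goal is then to show $\P_{(x,\theta)}[A]=0$, contradicting positivity. By the standard Poisson-thinning construction of the zigzag process,
\[
   \P_{(x,\theta)}[A] = \hat{\mathbb E}\!\left[\exp\!\left(-\int_0^\infty (\lambda_i+\lambda_j)(\hat X_s, \hat\Theta_s)\,ds\right)\right],
\]
where $(\hat X,\hat\Theta)$ is the modified zigzag process in which components $i$ and $j$ of the velocity are forced to $+1$ and the remaining components obey the usual canonical dynamics. Since $\lambda_i + \lambda_j \geq (\partial_i U)_+ + (\partial_j U)_+ \geq \partial_i U + \partial_j U$, the task reduces to proving that under $\hat{\mathbb P}$,
\[
   \int_0^\infty \bigl[(\partial_i U)_+ + (\partial_j U)_+\bigr](\hat X_s)\,ds = +\infty \qquad \text{almost surely}.
\]

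The key step combines GC\ref{ass:GC2} with a fundamental-theorem-of-calculus computation along $\hat X$:
\[
   U(\hat X_T) - U(x) = \int_0^T (\partial_i+\partial_j)U(\hat X_s)\,ds + R_T, \qquad
   R_T := \int_0^T \sum_{k\neq i,j} \hat\Theta^k_s\,\partial_k U(\hat X_s)\,ds.
\]
GC\ref{ass:GC2} gives $U(\hat X_T)\geq c\ln(1+|\hat X_T|)-c'\geq c\ln(1+T)-C$ for $T$ large, so using $a_+\geq a$,
\[
   \int_0^T\bigl[(\partial_i U)_+ + (\partial_j U)_+\bigr](\hat X_s)\,ds \geq c\ln(1+T) - C' - R_T.
\]
The proposition will follow once we know that $R_T = o(\ln T)$ almost surely under $\hat{\mathbb P}$.

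The main obstacle is this last control of $R_T$. The integrand $\sum_{k\neq i,j}\hat\Theta^k_s\partial_k U$ has random sign and potentially large magnitude. The natural strategy is a compensator argument: for each $k\neq i,j$, the counting process $N^k_T-\int_0^T \lambda_k\,ds$ (with $\lambda_k=(\hat\Theta^k\partial_kU)_+$) is a martingale, which yields control of the positive part of $\hat\Theta^k\partial_kU$, while the negative part, active only between consecutive switches of component $k$, can be bounded via an integration-by-parts or telescoping argument exploiting the strict integrability $c>d$ from GC\ref{ass:GC2}. Together these should give $R_T=O(\ln T)$ with a constant strictly smaller than $c$, yielding the required divergence of the switching-rate integral and hence $\P_{(x,\theta)}[A]=0$, the desired contradiction.
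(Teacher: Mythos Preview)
Your reduction via Lemma~\ref{lem:frozen} and the representation
\[
   \P_{(x,\theta)}[A] = \hat{\E}\!\left[\exp\!\left(-\int_0^\infty (\lambda_i+\lambda_j)(\hat X_s,\hat\Theta_s)\,ds\right)\right]
\]
are fine, and the fundamental-theorem-of-calculus identity along $\hat X$ is correct. The proof breaks down at the control of $R_T$, which you yourself flag as the main obstacle but do not actually carry out. The sketched compensator argument does not deliver what you need: the martingale $N^k_T-\int_0^T\lambda_k\,ds$ relates the \emph{number of switches} to the integrated positive part $\int_0^T(\hat\Theta^k\partial_kU)_+\,ds$ in expectation, but gives no pathwise upper bound on the latter, and there is no mechanism preventing $N^k_T$ itself from growing faster than $\ln T$. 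Your remark that the negative part is ``active only between consecutive switches'' is also not right: the sign of $\hat\Theta^k\partial_kU$ is governed by the geometry of $U$, not by the switch times. In short, nothing in GC\ref{ass:GC2} forces the $d-2$ ``free'' coordinates to contribute only $o(\ln T)$ to $R_T$; already for $U(x)=c\ln(1+|x|)$ one sees contributions of order $(d-2)\ln T$, which for $d\geq 4$ swamp the $c\ln T$ gain from the two frozen directions.

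The paper sidesteps this difficulty entirely by an inductive coupling argument rather than a direct estimate. Given two frozen indices, say $d$ and $d+1$ with $\theta_d=\theta_{d+1}=1$, one introduces the $d$-dimensional potential $V(y_1,\dots,y_d)=U(y_1,\dots,y_{d-1},y_d,y_d)$, which again satisfies GC\ref{ass:GC2} (with the same $c>d+1>d$). The $d$-dimensional zigzag in $V$ is coupled to the original process using the same exponentials for coordinates $1,\dots,d-1$ and a fresh one $\tilde E^1_d$ for coordinate $d$. On the event $\{\Theta_d,\Theta_{d+1}\text{ never switch}\}\cap\{\tilde E^1_d\geq E^1_d+E^1_{d+1}\}$, the elementary bound $(\partial_dU+\partial_{d+1}U)_+\leq(\partial_dU)_++(\partial_{d+1}U)_+$ shows the integrated rate for the new $d$-th component stays below $\tilde E^1_d$, so this component never switches either and the $d$-dimensional process escapes to infinity. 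This contradicts the inductive hypothesis $(\mathcal P_d)$, and the induction bottoms out at $d=1$ or $d=2$ using Remark~\ref{rem:always_switch}. The point is that the two frozen directions are merged into one, so the ``other'' directions never have to be estimated at all.
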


\begin{proof}[Proof of Proposition~\ref{prop:SNE}]

We wish to prove for all $d$ the following statement:
  \begin{equation} \label{eq:nonevanescence-induction}\tag{$\mathcal{P}_d$}
     \forall U:\dR^d\to \dR,\quad  \text{$U$ satisfies GC2}
   \implies \text{the zigzag process for $U$ is non-evanescent}.
 \end{equation}
 
If $d = 1$, by~\eqref{eq:lower_bound_no_escape}, with $\tau$ denoting the time of the first switch, and Remark~\ref{rem:always_switch}, \eqref{eq:nonevanescence-induction} follows. 

For $d \geq 2$, the strategy is to prove this by induction. The form of the growth condition
is tailored to this strategy: it clearly implies that $\int \exp(-U(x)) dx$ is finite and may  be normalized into a probability, but it crucially also implies that the same is true for all the conditional measures on affine subspaces.
For the base case $d=2$, 
  using Lemma~\ref{lem:frozen}, we see that if $\prb[(x,\theta)]{|X_t|\to \infty} >0$ 
  then with positive probability the process never switches.
  Since $U\to \infty$ this is not possible (see Remark~\ref{rem:always_switch}).

  Let us now prove the induction step by contraposition. Assume that 
  ($\mathcal{P}_{d+1}$) is false: there exists a potential $U$ in dimension $d+1$ that 
 satisfies the growth condition, but for which the zigzag process is evanescent, that
is,  there is
  a point $(x,\theta)$ such that $\prb[(x,\theta)]{\abs{X_t} \to \infty} >0$.
  Our goal is to define a potential in dimension $d$ that also satisfies the growth condition and for which we also have evanescence.

  By Lemma~\ref{lem:frozen}, there are two indices, say $d$ and $d+1$ without loss of
  generality, such that 
  \[
    \prb[(x,\theta)]{ \text{$d$ and $d+1$ never switch}} >0.
  \]
  We may also assume without loss of generality that $\theta_d = \theta_{d+1} =
  1$. Note that the process may be constructed by considering $d+1$ 
  sequences of iid exponential random variables $(E^k_j)_{j=1,...,d+1; k\in\dN}$ and saying
  that the $k$\textsuperscript{th} jump of the $j$\textsuperscript{th} component
  of $\Theta$, say $T^k_j$,  occurs when the accumulated jump rate $\int_{T^{k-1}_j}^t \lambda_j(X_s,\Theta_s) ds$ 
  reaches $E^k_j$.

  Consider now a second, $d$-dimensional zigzag process $(Y_1,...,Y_d;H_1,...,H_d)$
  starting from $(x_1,...,x_d;\theta_1,...,\theta_d)$ in the potential
  $V(y_1,...,y_d) = U(y_1,...,y_d,y_d)$. 
  Note that, since $U$ satisfies the growth condition, 
  \[ V(y_1,...,y_d) \geq c\ln( 1+ \abs{(y_1,...,y_d,y_d)}_{\dR^{d+1}}) - c'
              \geq  c\ln( 1+ \abs{(y_1,...,y_d)}_{\dR^{d}}) - c'
	    \]
	    where $c>d+1>d$, so $V$ satisfies the growth condition
	in dimension $d$. It remains to show that 
	the zigzag process in $V$ is evanescent. 

  We couple the process in $V$ with the previous one,  using the same randomness 
  $(E^k_j)_{j=1,\dots,d-1, k\in\dN}$ for the first $d-1$ coordinates, and an 
  independent sequence $(\tilde{E}^k_d)_{k\in\dN}$ for the last one. 
  Let $\tau$ be the first time when one of $\Theta_{d}$, $\Theta_{d+1}$ or $H_d$
  switches. 
  For $t\leq \tau$, using the elementary bound $(a+b)_+ \leq a_+ + b_+$ and the 
 fact that  $H_d$, $\Theta_d$ and $\Theta_{d+1}$ are all equal to $1$ up to time $t$, 
 we get
  \begin{align*}
    \int_0^t (\partial_d V(Y_s) H_d(s))_+ ds
    &= \int_0^t (\partial_d U(Y_s) + \partial_{d+1}U(Y_s))_+ ds \\
    &= \int_0^t (\partial_d U(X_s) + \partial_{d+1}U(X_s))_+ ds \\
    &\leq \int_0^t (\Theta_d(s) \partial_d U(X_s))_+ ds + \int_0^t (\Theta_{d+1}(s) \partial_{d+1}U(X_s))_+ ds \\
    &\leq \int_0^\infty (\Theta_d(s) \partial_d U(X_s))_+ ds + \int_0^\infty (\Theta_{d+1}(s) \partial_{d+1}U(X_s))_+ ds. 
    \end{align*}
  Now, the event $A = \{ \tilde{E}^1_d \geq E^1_d + E^1_{d+1} \} \cap \{ \Theta_d \text{ and } \Theta_{d+1} \text{ never switch}\}$
  has positive probability, and on this event we can continue the bounds: 
  \begin{align*}
    \int_0^t (\partial_d V(Y_s) H_d(s))_+ ds
    &\leq \int_0^\infty (\Theta_d(s) \partial_d U(X_s))_+ ds + \int_0^\infty (\Theta_{d+1}(s) \partial_{d+1}U(X_s))_+ ds \\
    &< E^1_d + E^1_{d+1} \\
    &\leq \tilde{E}^1_d. 
    \end{align*}
    This shows that on $A$, $\tau$ must be infinite, that is, $H_d$ never switches either and thus $|Y_t| \rightarrow \infty$.
    Since the growth hypothesis is satisfied for $V$, 
    this concludes the proof of the induction step by contraposition. 
\end{proof}

\subsection{Putting the pieces together}
\begin{theorem}
  \label{thm:Tprocess}
 If the zigzag process is fully flippable, then it is a weakly non-evanescent $T$-process. 

 If in addition $(x,\theta)\leadsto (y,\eta)$ for all pairs of points, the process
 is $\psi$-irreducible and aperiodic, and all compact sets are petite.

 If in addition the process is (strongly) non-evanescent, then 
 it is positive Harris recurrent and ergodic. 
\end{theorem}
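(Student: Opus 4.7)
The plan is to treat the three implications in turn, each time adding the new hypothesis to what has already been established and then invoking standard Meyn--Tweedie machinery.

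Weak non-evanescence under full flippability is just the second part of Lemma~\ref{lem:weakNonEvanescence}. For the $T$-process property I would fix a probability measure $a$ on $\dR_+$ with full support (e.g.\ the exponential distribution), then for each $z_0 = (x_0,\theta_0)\in E$ use full flippability to select $(y_0,\eta_0)$ with $z_0 \fullyleadsto (y_0,\eta_0)$ and feed this into Lemma~\ref{lem:continuous_component}. That lemma returns open neighbourhoods $U_{z_0}\ni x_0$ and $V_{z_0}\ni y_0$, a time window $(t_{z_0}, t_{z_0}+\eps_{z_0}]$ and a constant $c_{z_0}>0$ such that the sampled kernel $K_a(z,\cdot) = \int P^t(z,\cdot)\, da(t)$ dominates a positive multiple of $\Leb(\,\cdot\, \cap (V_{z_0}\times\{\eta_{z_0}\}))$ uniformly for $z\in U_{z_0}\times\{\theta_0\}$. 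A countable subcover of $E$ together with a summation of the associated substochastic kernels weighted by a summable sequence then yields a single lower semi-continuous substochastic kernel $T$ with $T(z,E)>0$ for every $z$ and $K_a\geq T$, which is precisely the $T$-process structure.

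Once full reachability is available, the same lower bounds give $\psi$-irreducibility: for every $z\in E$ and every nonempty open $V\subset E$ one composes an admissible control $z\leadsto z'$ with $z'$ close to the base point of a neighbourhood covering $V$, and Lemma~\ref{lem:continuous_component} then produces a $t>0$ with $P^t(z,V)>0$. Taking $\psi$ to be the Lebesgue measure on $E$ (or $\pi$) yields $\psi$-irreducibility. Aperiodicity follows because the lower bounds in Lemma~\ref{lem:continuous_component} hold on an open interval of times, so the minorisation charges every sufficiently large time and no nontrivial cyclic decomposition is possible. The petiteness of every compact set is then the topological criterion for irreducible $T$-processes \cite[Theorem~4.1]{MT2}: any compact set is covered by finitely many of the neighbourhoods $U_{z_0}$, and the uniform minorisations on each piece combine into a single petite minorisation for the whole compact.

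The third implication follows from the standard Meyn--Tweedie dichotomy. For a $\psi$-irreducible process in which all compact sets are petite, non-evanescence is equivalent to Harris recurrence \cite[Theorem~3.1]{MT3}, and the finite invariant measure provided by~\eqref{eq:integrability-potential} promotes this to positive Harris recurrence; combined with aperiodicity, the Harris ergodic theorem then yields the total variation convergence $\|P^t(z,\cdot)-\pi\|_{\mathrm{TV}}\to 0$. The main obstacle lies in the first step: the local absolutely continuous lower bounds from Lemma~\ref{lem:continuous_component} genuinely depend on the base point through all of $U_{z_0}, V_{z_0}, t_{z_0}, \eps_{z_0}, c_{z_0}$, and patching them into a single nontrivial, lower semi-continuous kernel requires some care in the covering argument. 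Everything else is either a direct appeal to earlier results of Section~\ref{sec:probabilistic} or a standard Meyn--Tweedie statement, and should go through without new ideas.
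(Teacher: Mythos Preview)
Your outline follows essentially the same route as the paper: weak non-evanescence from Lemma~\ref{lem:weakNonEvanescence}, the $T$-process property from the local minorisations of Lemma~\ref{lem:continuous_component} patched over a cover, open-set irreducibility plus \cite[Theorem~4.1]{MT2} for petiteness of compacts, and the Meyn--Tweedie equivalence between non-evanescence and Harris recurrence. One harmless variation: for the $T$-component you take a countable subcover and form a weighted sum of the local minorising kernels, whereas the paper uses a \emph{locally finite} cover and defines $K$ as a pointwise maximum over the finitely many patches containing a given point. Both give a nontrivial lower semicontinuous kernel below the resolvent.

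There is one place where you are too quick. In the continuous-time Meyn--Tweedie framework used here, positive Harris recurrence together with the aperiodicity notion recalled in the paper (existence of a petite $C$ and $T$ with $\prb[z]{Z_t\in C}>0$ for all $z\in C$, $t\geq T$) does \emph{not} directly give total-variation ergodicity; \cite[Theorem~6.1]{MT2} requires irreducibility of some \emph{skeleton} chain. The paper carries out this extra step explicitly: it uses full reachability to pass through a fixed looping point $(x,\theta)\fullyleadsto(x,\theta)$, and exploits the time window $[t_0,t_0+\eps]$ from Lemma~\ref{lem:continuous_component} to absorb the mismatch between an arbitrary travel time and a multiple of a fixed $\eps$, thereby showing the $\eps$-skeleton is open-set irreducible. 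Your sentence ``the minorisation charges every sufficiently large time'' gestures at this, but as stated Lemma~\ref{lem:continuous_component} only gives a window of length $\eps$; extending to all large times (for aperiodicity) and to all multiples of a fixed $\eps$ (for skeleton irreducibility) both require the iteration/Markov-property argument that the paper writes out. This is not a new idea, but it is a genuine step you should not skip.
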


\begin{proof}
The fact that a fully flippable zigzag process is weakly non-evanescent is a consequence of Lemma~\ref{lem:weakNonEvanescence}.

 We know that all points $(x,\theta)\in E$ lead
  to a different point by a sequence where all indices are switched.  From
  Lemma~\ref{lem:continuous_component} and a  compactness argument, this
  implies that there exists a family $(\cU_n)_{n\in\dN}$ of open sets in $E$, a
  family $(\cV_n)_{n\in\dN}$ of open sets in $\dR^d$, velocities
  $\eta_n\in\{-1,1\}^d$  and numbers $(t_n, \eps_n, c_n)$, such that:
 \begin{itemize}
   \item The $(\cU_n)_{n\in\dN}$ form a locally finite open cover: each $(x,\theta)\in E$ 
    belongs to at least one, and at most a finite number of the $\cU_n$. 
  \item  for all 
 $(x,\theta)\in \cU_n$, all $t\in[t_n,t_n+\eps_n]$ and all positive measurable $f$, 
 \[ \esp[(x,\theta)]{ f(X_t,\Theta_t)} 
   \geq c_n \int f(y,\eta_n) \ind{\cV_n}(y) dy. \] 
 \end{itemize}
 Define a kernel $K$ by the formula
 \[ 
   K((x,\theta), A \times \{\eta\}) 
   = \int \ind{A}(y) \max_{n: (x,\theta) \in \cU_n}\PAR{%
     c_n \ind{\eta_n = \eta} \ind{\cV_n}(y) \int_{t_n}^{t_n+ \eps_n} e^{-t}dt
   }  dy.
 \]
 By construction, the resolvent is bounded below by $K$. 
 For all $(x,\theta) \in \cU_n$, we have that
 $K((x,\theta),E) \geq c_n \text{Leb}(\cV_n)\int_{t_n}^{t_n+\eps_n}e^{-t}dt>0$, i.e. $K$ is nontrivial. 
 Moreover, for any measurable set $A$ and any $\eta$, $K((x,\theta),A\times\{\eta\})$ is lower semicontinuous in $(x,\theta)$: indeed, if $(x_j)$ converges to $x$, then the
 $x_j$ will eventually belong to all the $\cU_n$ containing $x$, so
 $K((x_j,\theta),A) \geq K((x,\theta),A)$ for $j$ large enough.  
 To sum up, the resolvent kernel of the process is bounded below 
 by a nontrivial lower semi continuous kernel: the process is a $T$-process. 

 Suppose now that  $(x,\theta)\leadsto (y,\eta)$ for all pairs of points. This implies
that $(x,\theta)\fullyleadsto (y,\eta)$ for all pairs of points. For any 
such pair, and any neighbourhood $\cO\times\{\eta\}$ of $(y,\eta)$, another application
of Lemma~\ref{lem:continuous_component} yields $\prb[x,\theta]{\tau_\cO<\infty} >0$;
this in turn implies that 
the process is open set irreducible in the sense of \cite{Twe94}. 
By \cite[Theorem 3.2]{Twe94} (see also \cite[Proposition 6.2.2]{MT09} for 
the similar statement for discrete time chains), the process is then $\psi$-irreducible. 

All compact sets are petite by an application~\cite[Theorem 4.1 (i)]{MT2}.

To prove aperiodicity, 
let $(x,\theta)$ be an arbitrary point. We know that $(x,\theta)\fullyleadsto (x,\theta)$, 
  so by Lemma~\ref{lem:continuous_component}, there exists $t_0,\eps$ and two 
  open neighbourhoods $\cU$ and $\cV$  of $x$  such that
  \begin{equation}
    \label{eq:UandV}
    \prb[x',\theta]{X_t \in \cdot, \Theta_t = \theta} \geq c \mathrm{Leb}(\cdot\cap \cV),
  \end{equation}
  for all $x'\in \cU$ and $t\in[t_0,t_0+\eps]$. This shows
  that $\cU$ is a petite set.
    Writing  $\cW  =\cU\cap \cV$,  we see that $\cW$ is petite (as a subset of $\cU$), 
  and for all $x'\in \cW $ and $t\in [t_0,t_0+\eps]$, 
  \[ 
    \prb[x',\theta]{X_t \in \cW , \Theta_t = \theta} \geq c', 
  \]
  where $c'= c \mathrm{Leb}(\cW)$.
  Let $N = \lceil t_0/\eps\rceil$ and $T=Nt_0$. 
  For any $t\geq T$, let $n = \lfloor t/t_0 \rfloor$
  and $t_0' = t/n$. Then $t_0'\in[t_0,t_0+\eps]$, so by iteration and the Markov property, 
  \[ \prb[x',\theta]{X_t \in \cW , \Theta_t = \theta} \geq (c')^n >0,\]
  proving the aperiodicity.

To prove Harris recurrence, we use the fact that for $\psi$-irreducible $T$-processes, 
it is in fact equivalent to non-evanescence (\cite[Theorem 3.2]{MT2}), 
and the positivity follows from the fact that there is an invariant probability 
measure. 

It remains to show that the process is ergodic. By \cite[Theorem 6.1]{MT2}, 
it is enough to prove that some skeleton chain is irreducible. To this end, 
first take $(x,\theta)$ an arbitrary point: we reuse Lemma~\ref{lem:continuous_component}
to define $\cU$, $\cV$, $t_0$ and $\eps$ such that Eq.~\eqref{eq:UandV} holds; 
in words, it is possible to loop around $(x,\theta)$ and there is a little room $\eps$ 
in the looping time. 
Now let $(y,\eta)$, $(y',\eta')$ be two arbitrary points. By reachability 
we
can go from the first one to the second one with a visit to $(x,\theta)$ in between, 
and adding a loop around $(x,\theta)$ in the middle will give us what we need. 
More formally, using Lemma~\ref{lem:continuous_component} twice more, 
there exists $t_1$, $c_1$ and a neighborhood  $\cV_1$ of $x$ such that
\[
  \prb[(y,\eta)]{ (X_{t_1},\Theta_{t_1}) \in \cdot\times\{\theta\}}  \geq c_1 \mathrm{Leb}(\cdot\cap \cV_1),
\]
and $t_2$, $c_2$ and two neighborhoods $\cU_2$ and $\cV_2$ of $x$ and $y'$ such that
\[
  \prb[(x',\theta)]{ (X_{t_2},\Theta_{t_2}) \in \cdot \times \{\eta'\}} 
  \geq c_2\mathrm{Leb}(\cdot\cap \cV_2)
\]
for all $x'\in \cU_2$. 
Then for any $t \in [t_0+t_1+t_2, t_0+t_1+t_2+\eps]$, 
applying the Markov property at the times $t_1$ and $t-t_2$ yields
\begin{align*}
  & \prb[(y,\eta)] {(X_t,\Theta_t) \in \mathcal O \times \{\eta'\}} \\
  & \geq\prb[(y,\eta)] {\Theta_{t_1}  = \Theta_{t-t_2} = \theta,\Theta_t = \eta', X_{t_1}\in \cU\cap\cV_1, X_{t-t_2}\in \cV\cap\cU_2,X_t\in \cO} \\
  & \geq\prb[(y,\eta)] {\Theta_{t_1}  = \Theta_{t-t_2} = \theta, X_{t_1}\in \cU\cap\cV_1, X_{t-t_2}\in \cV\cap\cU_2}c_2\mathrm{Leb}(\cO\cap\cV_2) \\
  & \geq\prb[(y,\eta)] {\Theta_{t_1}  = \theta, X_{t_1}\in \cU\cap\cV_1} c\mathrm{Leb}(\cV\cap\cU_2) c_2\mathrm{Leb}(\cO\cap\cV_2) \\
  &\geq c c_1 c_2 \mathrm{Leb}(\cU\cap\cV_1)\mathrm{Leb}(\cV\cap\cU_2) \mathrm{Leb}(\cO\cap\cV_2), 
\end{align*}
since $(t-t_2) - t_1 \in [t_0,t_0+\eps]$. The time interval
 $[t_0+t_1+t_2, t_0+t_1+t_2 +\eps]$ must contain a multiple of $\eps$, proving that the $\eps$-chain is open
set irreducible and therefore irreducible. 
\end{proof}

\subsection{Lyapunov function}
\label{sec:Lyapunov}

In order to establish exponential ergodicity we have to establish contractivity in the tails for which a Lyapunov function argument is used. For this we first require the notion of the generator of the zigzag process.
We define the \emph{generator} of the zigzag process in $E$ with switching rates $(\lambda_i)_{i=1}^d$ as the operator $L$ whose domain $\mathcal D(L)$ consists of continuous functions $f : E \rightarrow \R$, such that $t \mapsto f(x+\theta t,\theta)$ is absolutely continuous on $[0,\infty)$ for all $(x,\theta) \in E$. For such $f \in \mathcal D(L)$, the function $Lf$ is defined as 
\[ L f(x,\theta) = \langle \theta, \nabla f(x) \rangle + \sum_{i=1}^d \lambda_i(x,\theta) (f(x, F_i \theta) - f(x,\theta)), \quad  (x,\theta) \in E.\]

The main result on exponential ergodicity (Theorem~\ref{thm:exponential-ergodicity}) will be proved using the following result from Down, Meyn and Tweedie (\cite[Theorem 5.2]{DMT95}). 
\begin{theorem}[Drift criterion for exponential convergence]
\label{thm:downmeyntweedie}
  Suppose that $(X_t,\Theta_t)$ is an irreducible aperiodic process, 
  and suppose that there exists a Lyapunov function, that is, a function $V\geq 1$ such that 
  \[ LV \leq - \eps V + c\ind{K}, \]
  where $K$ is a petite set. Then $(X_t,\Theta_t)$ is exponentially ergodic: 
  \[ \| \prb[(x,\theta)]{ (X_t,\Theta_t) \in \cdot} - \pi \|_{\mathrm{TV}} \leq M(x,\theta)e^{-ct},\]
  for some positive constant $c$. 
\end{theorem}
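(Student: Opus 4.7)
The plan is to reduce Theorem~\ref{thm:downmeyntweedie} to the corresponding geometric ergodicity results for discrete time $\psi$-irreducible chains, via a suitably chosen skeleton or resolvent chain, following the standard strategy of \cite{MT2, DMT95}.

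The first step is to turn the infinitesimal drift condition into an integrated exponential moment bound. Applying Dynkin's formula to the semimartingale $e^{\varepsilon t} V(X_t, \Theta_t)$ and using $LV \leq -\varepsilon V + c \ind{K}$ yields, after taking expectations,
\[
  \esp[(x,\theta)]{V(X_t,\Theta_t)}
  \leq e^{-\varepsilon t} V(x,\theta) + \frac{c}{\varepsilon}\PAR{1-e^{-\varepsilon t}},
\]
for all $(x,\theta)\in E$ and $t\geq 0$ (with the usual localization argument, since $V$ may be unbounded; finiteness of $V(X_t,\Theta_t)$ for all $t$ follows from the fact that the zigzag process is non-explosive by Remark~\ref{rem:always_switch} combined with the drift). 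An analogous computation applied to the stopped process at the hitting time $\sigma_K := \inf\{t\geq 0 : (X_t,\Theta_t) \in K\}$ yields an exponential moment for $\sigma_K$~:
$\esp[(x,\theta)]{e^{\varepsilon \sigma_K}} \leq V(x,\theta) + \text{const}$.
This is the classical Kendall/Meyn--Tweedie tightening of a Foster--Lyapunov condition into geometric return times.

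Second, I would combine this return-time bound with the petite set property of $K$. By the petiteness assumption, there exist a sampling distribution $a$, a constant $c_0>0$ and a nontrivial measure $\nu$ such that $\int P^t((x,\theta),\cdot)\,da(t) \geq c_0 \nu(\cdot)$ for all $(x,\theta)\in K$. Choosing an appropriate skeleton chain (for instance the $\Delta$-skeleton $Z_n := (X_{n\Delta},\Theta_{n\Delta})$, or an $a$-resolvent chain), irreducibility and aperiodicity of $(X_t,\Theta_t)$ transfer to the skeleton chain, while the drift condition gives a geometric drift inequality of the form $PV \leq \rho V + b \ind{K}$ with $\rho < 1$ for the skeleton. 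One can then apply the discrete-time geometric ergodicity theorem \cite[Theorem 15.0.1]{MT09}: a $\psi$-irreducible, aperiodic chain with a geometric drift towards a petite set is geometrically ergodic, with convergence rate governed by the Lyapunov function,
\[
  \|P^n((x,\theta),\cdot) - \pi\|_{\mathrm{TV}} \leq M_0(x,\theta) \rho_0^n.
\]

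The final step is to interpolate the discrete-time bound back to continuous time. Since $t\mapsto \|\prb[(x,\theta)]{(X_t,\Theta_t)\in\cdot} - \pi\|_{\mathrm{TV}}$ is non-increasing, any bound along a skeleton $(n\Delta)_{n\geq 0}$ extends to all $t$ at the cost of a multiplicative constant, giving the claimed bound $\|\prb[(x,\theta)]{(X_t,\Theta_t)\in\cdot} - \pi\|_{\mathrm{TV}} \leq M(x,\theta)e^{-ct}$ for some $c>0$ and some $M$ controlled by $V(x,\theta)$. The most delicate step in this program is the translation from the infinitesimal drift of the generator $L$ to a contraction property of the discrete-time transition $P^\Delta$; technicalities involve justifying Dynkin's formula for the unbounded function $V$ and ensuring the exceptional sets where $V$ is infinite can be discarded. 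This is precisely the content of \cite[Theorem 5.2]{DMT95}, on which we rely.
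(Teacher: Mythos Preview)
The paper does not prove Theorem~\ref{thm:downmeyntweedie} at all: it is quoted verbatim as \cite[Theorem~5.2]{DMT95} and used as a black box in the proof of Theorem~\ref{thm:exponential-ergodicity}. There is therefore no ``paper's own proof'' to compare against.

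Your sketch is a reasonable outline of the argument in \cite{DMT95}: pass from the infinitesimal drift $LV\leq -\varepsilon V + c\ind{K}$ to an integrated bound via Dynkin/Gronwall, deduce geometric drift for a skeleton or resolvent chain, invoke the discrete-time geometric ergodicity theorem from \cite{MT09}, and transfer back to continuous time. One small caveat: the claim that $t\mapsto\|\prb[(x,\theta)]{(X_t,\Theta_t)\in\cdot}-\pi\|_{\mathrm{TV}}$ is non-increasing is fine here (since $\pi$ is invariant and Markov kernels contract TV), but the actual interpolation in \cite{DMT95} does not rely on monotonicity alone; it uses the integrated drift bound $P^tV\leq V + c/\varepsilon$ uniformly over $t\in[0,\Delta]$ to control the constant in front of the geometric decay. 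Either way, since you explicitly defer to \cite[Theorem~5.2]{DMT95} at the end, your write-up is consistent with how the paper treats this result.
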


As discussed in \cite{DMT95}, the function $M(x,\theta)$ may be taken to be a positive multiple of $V$. The approach in \cite{DMT95} does not yield quantitative results on the value of $c$. For estimates on the rate of convergence in an $L^2$-framework of the Zig-Zag processes (and other piecewise deterministic process) we refer to \cite{Andrieu2018}.

\begin{remark} 
  The continuity assumption on functions in the domain $\mathcal D(L)$ leads to
  a domain which is somewhat smaller than that of the \emph{extended
    generator}, characterized in \cite[Theorem 26.14]{Davis1993}. However this
  definition is sufficient for our purposes. 
\end{remark}
In order to motivate our choice of Lyapunov function, first note that
we are looking for a function that typically decreases along the dynamics.
Since the velocity has a positive probability of switching whenever the process
is going "uphill" (that is, whenever $\scal{\theta,\nabla U(x)}>0$, 
 a first guess might be $V(x,\theta) = \exp(\alpha U(x))$ for some $\alpha > 0$.
 However this velocity jump will not occur immediately, 
therefore we wish to introduce a dependence on the partial derivatives of $U$
and on the direction $\theta$ so that the effect of the switching intensity is
to decrease $V$ with sufficiently large probability   while we are running
uphill of the potential. For a zero excess switching rate, $\gamma(x,\theta)
\equiv 0$, we could simply take $V(x,\theta) = \exp(\alpha U(x) +\beta \langle
\theta, \nabla U(x)\rangle)$ but for nonzero excess switching rate we have to
be more careful in dependence on the partial derivatives of $U$. The particular
structure of the zigzag process enables us to work on each component of the
gradient separately.

The Lyapunov function used for the one-dimensional zigzag process (see \cite{BierkensRoberts2015}) requires milder assumptions compared to Growth Condition~\ref{ass:GC3}: it only requires $|U'(x)|$ to be bounded away from zero for $x$ outside of a compact set, without any conditions on the second derivative. However, it cannot be extended to the multi-dimensional case in a simple way. Indeed, the multi-dimensional generalization
\[ V(x,\theta) = \exp\left( \alpha \|x\| + \beta \langle \theta, x/\|x\| \rangle \right)\]
fails to be contractive in e.g. the case of a non-diagonally dominant Gaussian target.

The Lyapunov function we will introduce in Lemma~\ref{lem:lyapunov} may also be
compared to the Lyapunov function for the Bouncy Particle Sampler
\cite{Deligiannidis2017},
\[ V(x,v) =  \exp\left(
    \tfrac 1 2 U(x)) - \tfrac 1 2 \ln(\lambda(x,-v)
  \right), \quad (x,v) \in \R^d \times S^{d-1}.
\] 
Note
that this Lyapunov function is not well defined in our situation which should
include the case of canonical switching rates, where $\gamma(\cdot) \equiv 0$. 

\begin{lemma}
\label{lem:lyapunov}
  Suppose Growth Condition~\ref{ass:GC3} is satisfied. Consider the process with a switching rate given by $\lambda_i(x,\theta) = \gamma_i(x,\theta) + (\theta_i \partial_i U(x))_+$, where $\gamma : E \rightarrow [0,\infty)^d$ is bounded: for some constant $\overline \gamma \geq 0$,
  \[ \gamma_i(x,\theta) \leq \overline \gamma, \quad (x,\theta) \in E, \, i = 1, \dots, d.\]
  Let $\delta > 0$ and $\alpha > 0$ such that $0 \leq \overline \gamma \delta  <\alpha < 1$. Define $\phi(s) = \tfrac 1 2 \operatorname{sign}(s)
   \ln\PAR{ 1+ \delta \abs{s}}$. Then the function 
  \begin{equation}\label{eq:lyapunov} V(x,\theta) = \exp\PAR{  \alpha U(x) + \sum_i \phi(\theta_i \partial_i U(x))}\end{equation}
  is a Lyapunov function for $(X_t,\Theta_t)$, that is, $\lim_{|x| \rightarrow \infty} V(x) = \infty$ and 
  \[ LV \leq - \eps V + C \ind{K},\]
  where $\eps$, $C$ are positive constants and $K$ is a compact set in $E$. 
\end{lemma}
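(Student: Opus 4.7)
The plan is to write $V = e^{m}$ with $m(x,\theta) = \alpha U(x) + \sum_i \phi(s_i)$, where $s_i = \theta_i \partial_i U(x)$, and then to exploit the two key structural features of this choice: first, that $\phi$ is odd, which makes the velocity-switch increment of $m$ a \emph{single} term, and second, that the sign of $s_i$ determines both whether the canonical rate $(\theta_i\partial_i U)_+$ is activated and whether $e^{-2\phi(s_i)}-1$ is negative or positive, so that the flow term and the jump term combine naturally coordinate by coordinate.

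Concretely, since $s_j$ for $j\neq i$ is unchanged when $\theta$ is replaced by $F_i\theta$ and $\phi$ is odd, one has $m(x,F_i\theta)-m(x,\theta) = -2\phi(s_i)$, hence $V(x,F_i\theta) - V(x,\theta) = V(x,\theta)(e^{-2\phi(s_i)}-1)$. Differentiating $V=e^{m}$ along the flow gives
\[
   \frac{LV(x,\theta)}{V(x,\theta)}
   = \alpha\scal{\theta,\nabla U(x)} + H(x,\theta) + \sum_{i=1}^d \lambda_i(x,\theta)\bigl(e^{-2\phi(s_i)}-1\bigr),
\]
where $H(x,\theta) = \sum_{i,j}\theta_i\theta_j \phi'(s_i)\partial^2_{ij}U(x)$ collects the Hessian contribution. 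A direct calculation from the definition of $\phi$ shows $e^{-2\phi(s_i)}-1 = -\delta s_i/(1+\delta s_i)$ when $s_i>0$ and $\delta|s_i|$ when $s_i<0$.

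The main, and essentially only, nontrivial step is a per-coordinate algebraic bound. Plugging in $\lambda_i = \gamma_i + (s_i)_+$ and combining with the $\alpha s_i$ contribution from the flow gives, after simplification, $-(\alpha-\gamma_i\delta)|s_i|$ when $s_i\le 0$, and
\[
   \alpha s_i - \frac{(\gamma_i+s_i)\delta s_i}{1+\delta s_i}
   = -(1-\alpha)s_i + \frac{(1-\gamma_i\delta)s_i}{1+\delta s_i}
   \leq -(1-\alpha)s_i + \frac{1}{\delta}
\]
when $s_i>0$, using only $\gamma_i\delta<1$ (which follows from $\overline\gamma\delta<\alpha<1$). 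Setting $c = \min(\alpha-\overline\gamma\delta,\; 1-\alpha)>0$, summing over $i$ and bounding $|H|\leq \tfrac{\delta}{2}\sum_{i,j}|\partial^2_{ij}U|$ yields
\[
   \frac{LV}{V} \leq -c\sum_i |s_i| + \frac{d}{\delta} + O\bigl(\|\Hess U(x)\|\bigr).
\]
Since $\sum_i |s_i|$ is comparable to $|\nabla U(x)|$, Growth Condition~\ref{ass:GC3} ensures both that $|\nabla U|\to\infty$ (from $\max(1,\|\Hess U\|)/|\nabla U|\to 0$) and that the Hessian term is $o(|\nabla U|)$. Hence $LV/V\to -\infty$ as $|x|\to\infty$, which gives the desired bound $LV\leq -\eps V + C\ind{K}$ on a suitable compact~$K$.

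For the tightness statement $V\to\infty$, observe $|\phi(s_i)|\leq \tfrac12\ln(1+\delta|\nabla U|)$, so $m(x,\theta)\geq \alpha U(x) - \tfrac{d}{2}\ln(1+\delta|\nabla U(x)|)$. The second half of GC\ref{ass:GC3} gives $|\nabla U|/U\to 0$, hence $U\to\infty$ and $\ln|\nabla U|=o(U)$, so $m\to\infty$. The expected obstacle is purely the per-coordinate algebra above: it is what forces the precise interplay between $\alpha<1$, $\overline\gamma\delta<\alpha$, and the logarithmic shape of $\phi$. Everything else is a straightforward consequence of GC\ref{ass:GC3}.
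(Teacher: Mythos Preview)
Your proposal is correct and follows essentially the same route as the paper: compute $LV/V$, use the oddness of $\phi$ to reduce the jump term to $e^{-2\phi(s_i)}-1$, handle the two sign cases for $s_i$ with the same algebra (yielding $-(1-\alpha)|s_i|+1/\delta$ and $-(\alpha-\overline\gamma\delta)|s_i|$ respectively), bound the Hessian term via $|\phi'|\le\delta/2$, and conclude from GC\ref{ass:GC3}. You are in fact slightly more thorough than the paper, which does not explicitly verify $V\to\infty$ and ends with the (evidently mistyped) claim that $LV/V$ is ``less than $1$'' outside a large ball; your observation that $LV/V\to-\infty$ is the correct conclusion.
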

\begin{proof} It may be verified that $V \in \mathcal D(L)$.
   Using the expression of the generator, 
   \begin{align*}
     (LV/V)(x,\theta) &= \alpha \scal{\theta,\nabla U(x)} 
     + \sum_{i,j} \theta_i \partial_{ij}U(x) \theta_j \phi'(\theta_j \partial_j U(x)) \\
     &\quad + \sum_i (\gamma_i + (\theta_i \partial_i U)_+) \PAR{ \exp( \phi( - \theta_i \partial_i U) - \phi(\theta_i\partial_iU)) - 1}
   \end{align*}
For the $i^{\text{th}}$ component, if $s = \theta_i \partial_i U \geq 0$, then $\phi(-s) - \phi(s) = - \ln(1+\delta s)$, so
\begin{align*}  & \alpha s + (\gamma_i + (s)_+)\PAR{ \exp( \phi( - s) - \phi(s)) - 1} \\
 & = (\alpha - 1) s + \frac{(1 - \delta \gamma_i)s}{ 1 + \delta s} \leq -(1-\alpha)|s| + (1/\delta).
\end{align*}
   When $s<0$, we have $\phi(-s)-\phi(s) = \ln(1+ \delta \abs{s})$, so
   \begin{align*}
    & \alpha s + (\gamma_i + (s)_+)\PAR{ \exp( \phi( - s) - \phi(s)) - 1} \\
    & = \alpha s + \gamma_i \left( 1 + \delta \abs {s} - 1 \right) \leq - (\alpha - \overline \gamma \delta) |s|. 
   \end{align*}
   In either case, 
   \[ \alpha s + (\gamma_i + (s)_+)\PAR{ \exp( \phi( - s) - \phi(s)) - 1} \leq -  \min(1-\alpha, \alpha - \delta \overline \gamma) \abs{s} + (1/\delta).\] Since $0\leq \phi'(s)\leq \delta/2$, 
   \begin{align*}
     (LV/V)(x,\theta)   
       &\leq - \min(1-\alpha, \alpha - \overline \gamma \delta) \sum_i \abs{\partial_i U} 
       +d /\delta
     + \frac{\delta}{2} \sum_{i,j} \abs{ \partial_{ij} U}, 
   \end{align*}
   which is less than $1$ outside a sufficiently large ball by our hypotheses. 
 \end{proof}

\subsection{Proofs of the main results}
\label{sec:mainresults-proofs}

\begin{proof}[Proof of Theorem~\ref{thm:ergodicity}]
The steps of the proof are completely as depicted in Figure~\ref{fig:diagram} and simply consist of combining Proposition~\ref{prop:full_flip}, Theorem~\ref{thm:reachability} and Theorem~\ref{thm:Tprocess}. 
\end{proof}

\begin{proof}[Proof of Theorem~\ref{thm:exponential-ergodicity}]
By Lemma~\ref{lem:lyapunov}, there exists a Lyapunov function $V$ such that for some $\varepsilon > 0$, $LV \leq -\varepsilon V$ outside a compact set, where $L$ is the generator of the zigzag process, see Section~\ref{sec:Lyapunov}. Since Growth Condition~\ref{ass:GC3} implies Growth Condition~\ref{ass:GC1}, by Theorem~\ref{thm:Tprocess}, all compact sets are petite, and the process is $\psi$-irreducible and aperiodic, so that the conditions of Theorem~\ref{thm:downmeyntweedie} are satisfied, which establishes exponential ergodicity.
\end{proof}

\begin{proof}[Proof of Theorem~\ref{thm:FCLT}]
By the growth condition, there exist $\alpha > 0$ such that $\alpha < \beta + \eta/4 < 1/2$ and $\delta > 0$ such that $0 < \delta < \alpha$ such that, for some $c > 0$, $g \leq c V$ with $V$ given by~\eqref{eq:lyapunov}. 
Furthermore, again by the growth condition, for $x$ outside a bounded set, $V(x,\theta) \leq \exp( (\beta + \eta/2) U(x))$.
From the integrability assumption, $\pi(V^2) < \infty$. That all compact sets are petite follows from Theorem~\ref{thm:Tprocess}, whose conditions are satisfied by Theorem~\ref{thm:reachability}. The statement of the theorem then follows from Lemma~\ref{lem:lyapunov} and \cite[Theorem 4.3]{GlynnMeyn1996}.
\end{proof}

\section*{Acknowledgements}

We thank Tony Lelièvre, Paul Fearnhead and Eva Löcherbach for stimulating discussions, Pierre Monmarché for many 
exchanges on the merits of various Lyapunov functions, and Nikolas Nuesken and Julien Roussel
for discussions on alternative approaches. We thank the associate editor and the anonymous referee for their comments which helped to correct and improve this manuscript.


\begin{thebibliography}{10}
	
	\bibitem{Andrieu2018}
	Christophe Andrieu, Alain Durmus, Nikolas N{\"{u}}sken, and Julien Roussel.
	\newblock {Hypercoercivity of Piecewise Deterministic Markov Process-Monte
		Carlo}.
	\newblock {\em arXiv preprint arXiv: 1808.08592}, aug 2018.
	
	\bibitem{ABGKZ12}
	Romain Azaïs, Jean-Baptiste Bardet, Alexandre G\'enadot, Nathalie Krell, and
	Pierre-Andr\'e Zitt.
	\newblock Piecewise deterministic {M}arkov process---recent results.
	\newblock In {\em Journ\'ees {MAS} 2012}, volume~44 of {\em ESAIM Proc.}, pages
	276--290. EDP Sci., Les Ulis, 2014.
	
	\bibitem{BakhtinHurth2012}
	Y.~Bakhtin and T.~Hurth.
	\newblock {Invariant densities for dynamical systems with random switching}.
	\newblock {\em Nonlinearity}, 25(10):2937--2952, 2012.
	
	\bibitem{Benaim2015}
	M.~Benaim, S.~{Le Borgne}, F.~Malrieu, and P.-A. Zitt.
	\newblock {Qualitative properties of certain piecewise deterministic Markov
		processes}.
	\newblock {\em Ann. Inst. Henri Poincar{\'{e}} Probab. Stat.},
	51(3):1040--1075, 2015.
	
	\bibitem{BierkensDuncan2016}
	J.~Bierkens and A.~Duncan.
	\newblock {Limit theorems for the Zig-Zag process}.
	\newblock {\em Advances in Applied Probability}, 49(3), jul 2017.
	
	\bibitem{BierkensFearnheadRoberts2016}
	J.~Bierkens, P.~Fearnhead, and G.~O. Roberts.
	\newblock {The Zig-Zag Process and Super-Efficient Sampling for Bayesian
		Analysis of Big Data}.
	\newblock {\em to appear in Annals of Statistics}, 2018.
	
	\bibitem{Bierkens2015}
	Joris Bierkens.
	\newblock {Non-reversible Metropolis-Hastings}.
	\newblock {\em Statistics and Computing}, 25:1--16, 2015.
	
	\bibitem{Bierkens2018a}
	Joris Bierkens, Kengo Kamatani, and Gareth~O. Roberts.
	\newblock {High-dimensional scaling limits of piecewise deterministic sampling
		algorithms}.
	\newblock {\em arXiv preprint arXiv: 1807.11358}, jul 2018.
	
	\bibitem{BierkensRoberts2015}
	Joris Bierkens and Gareth Roberts.
	\newblock {A piecewise deterministic scaling limit of lifted
		Metropolis--Hastings in the Curie--Weiss model}.
	\newblock {\em Ann. Appl. Probab.}, 27(2):846--882, 2017.
	
	\bibitem{BouchardCoteVollmerDoucet2017}
	Alexandre Bouchard-C{\^{o}}t{\'{e}}, Sebastian~J Vollmer, and Arnaud Doucet.
	\newblock {The Bouncy Particle Sampler: A Non-Reversible Rejection-Free Markov
		Chain Monte Carlo Method}.
	\newblock {\em Journal of the American Statistical Association}, 2017.
	
	\bibitem{Davis1993}
	M.~H.~A. Davis.
	\newblock {\em {Markov models and optimization}}, volume~49 of {\em Monographs
		on Statistics and Applied Probability}.
	\newblock Chapman {\&} Hall, London, 1993.
	
	\bibitem{Deligiannidis2017}
	George Deligiannidis, Alexandre Bouchard-C{\^{o}}t{\'{e}}, and Arnaud Doucet.
	\newblock {Exponential Ergodicity of the Bouncy Particle Sampler}.
	\newblock {\em arXiv preprint arXiv: 1705.04579}, 2017.
	
	\bibitem{Deligiannidis2018}
	George Deligiannidis, Daniel Paulin, and Arnaud Doucet.
	\newblock {Randomized Hamiltonian Monte Carlo as Scaling Limit of the Bouncy
		Particle Sampler and Dimension-Free Convergence Rates}.
	\newblock {\em arXiv preprint arXiv: 1808.04299}, aug 2018.
	
	\bibitem{DiaconisHolmesNeal2000}
	Persi Diaconis, Susan Holmes, and RM~Neal.
	\newblock {Analysis of a nonreversible Markov chain sampler}.
	\newblock {\em Annals of Applied Probability}, 10(3):726--752, 2000.
	
	\bibitem{DMT95}
	D.~Down, S.~P. Meyn, and R.~L. Tweedie.
	\newblock Exponential and uniform ergodicity of {M}arkov processes.
	\newblock {\em Ann. Probab.}, 23(4):1671--1691, 1995.
	
	\bibitem{DuncanLelievrePavliotis2015}
	A.~B. Duncan, T.~Leli{\`{e}}vre, and G.~A. Pavliotis.
	\newblock {Variance Reduction using Nonreversible Langevin Samplers}.
	\newblock {\em Journal of Statistical Physics}, 163(3):457--491, jun 2016.
	
	\bibitem{Durmus2018}
	Alain Durmus, Arnaud Guillin, and Pierre Monmarch{\'{e}}.
	\newblock {Geometric ergodicity of the bouncy particle sampler}.
	\newblock {\em arXiv preprint arXiv: 1807.05401}, jul 2018.
	
	\bibitem{Fet17}
	Ninon Fetique.
	\newblock Long-time behaviour of generalised zig-zag process, 2017.
	
	\bibitem{GlynnMeyn1996}
	Peter~W. Glynn and Sean~P. Meyn.
	\newblock {A Liapounov bound for solutions of the poisson equation}.
	\newblock {\em Annals of Probability}, 24(2):916--931, 1996.
	
	\bibitem{Hwang1993}
	CR~Hwang, SY~Hwang-Ma, and SJ~Sheu.
	\newblock {Accelerating Gaussian diffusions}.
	\newblock {\em The Annals of Applied Probability}, 3(3):897--913, 1993.
	
	\bibitem{johnson2012variable}
	Leif~T Johnson and Charles~J Geyer.
	\newblock {Variable transformation to obtain geometric ergodicity in the
		random-walk Metropolis algorithm}.
	\newblock {\em The Annals of Statistics}, pages 3050--3076, 2012.
	
	\bibitem{Lelievre2013}
	T.~Leli{\`{e}}vre, F.~Nier, and G.~A. Pavliotis.
	\newblock {Optimal Non-reversible Linear Drift for the Convergence to
		Equilibrium of a Diffusion}.
	\newblock {\em Journal of Statistical Physics}, 152(2):237--274, jun 2013.
	
	\bibitem{LPW09}
	D.~A. Levin, Y.~Peres, and E.~L. Wilmer.
	\newblock {\em {Markov chains and mixing times}}.
	\newblock American Mathematical Society, 2009.
	
	\bibitem{Ma2016}
	Y.-A. Ma, T.~Chen, L.~Wu, and E.~B. Fox.
	\newblock {A Unifying Framework for Devising Efficient and Irreversible MCMC
		Samplers}, 2016.
	
	\bibitem{Mal15}
	Florent Malrieu.
	\newblock Some simple but challenging {M}arkov processes.
	\newblock {\em Ann. Fac. Sci. Toulouse Math. (6)}, 24(4):857--883, 2015.
	
	\bibitem{Maruyama1959}
	Gisiro Maruyama and Hiroshi Tanaka.
	\newblock {Ergodic Property of N-Dimensional Recurrent Markov Processes}.
	\newblock {\em Memoirs of the Faculty of Science, Kyushi University, Series A},
	13(2):157--172, 1959.
	
	\bibitem{Metropolis1953}
	Nicholas Metropolis, Arianna~W. Rosenbluth, Marshall~N. Rosenbluth, Augusta~H.
	Teller, and Edward Teller.
	\newblock {Equation of State Calculations by Fast Computing Machines}.
	\newblock {\em The Journal of Chemical Physics}, 21(6):1087, 1953.
	
	\bibitem{MeynTweedie1992}
	S.~Meyn and R.~L. Tweedie.
	\newblock {Stability of Markovian Processes I: Criteria for Discrete-Time
		Chains}.
	\newblock {\em Advances in Applied Probability}, 24(3):542--574, 1992.
	
	\bibitem{MT3}
	S.~Meyn and R.~L. Tweedie.
	\newblock {Stability of Markovian processes III: Foster-Lyapunov criteria for
		continuous-time processes}.
	\newblock {\em Advances in Applied Probability}, 25(3):518--548, 1993.
	
	\bibitem{MT09}
	Sean Meyn and Richard~L. Tweedie.
	\newblock {\em Markov chains and stochastic stability}.
	\newblock Cambridge University Press, Cambridge, second edition, 2009.
	\newblock With a prologue by Peter W. Glynn.
	
	\bibitem{MT2}
	Sean~P. Meyn and R.~L. Tweedie.
	\newblock Stability of {M}arkovian processes. {II}. {C}ontinuous-time processes
	and sampled chains.
	\newblock {\em Adv. in Appl. Probab.}, 25(3):487--517, 1993.
	
	\bibitem{MichelKapferKrauth2014}
	M.~Michel, S.~C. Kapfer, and W.~Krauth.
	\newblock {Generalized event-chain Monte Carlo: Constructing rejection-free
		global-balance algorithms from infinitesimal steps}.
	\newblock {\em The Journal of Chemical Physics}, 140(5), 2014.
	
	\bibitem{Monmarche2016}
	P.~Monmarch{\'{e}}.
	\newblock {Piecewise deterministic simulated annealing}.
	\newblock {\em ALEA}, 13(1):357--398, 2016.
	
	\bibitem{Pakman2017}
	Ari Pakman.
	\newblock {Binary Bouncy Particle Sampler}.
	\newblock {\em arXiv preprint arXiv: 1711.00922}, 2017.
	
	\bibitem{Pakman2016}
	Ari Pakman, Dar Gilboa, David Carlson, and Liam Paninski.
	\newblock {Stochastic Bouncy Particle Sampler}, 2016.
	
	\bibitem{PetersDeWith2012}
	E.~A. J.~F. Peters and G.~{De With}.
	\newblock {Rejection-free Monte Carlo sampling for general potentials}.
	\newblock {\em Physical Review E - Statistical, Nonlinear, and Soft Matter
		Physics}, 85(2):1--5, 2012.
	
	\bibitem{ReyBelletSpiliopoulos2015}
	Luc Rey-Bellet and Konstantinos Spiliopoulos.
	\newblock {Irreversible Langevin samplers and variance reduction: a large
		deviations approach}.
	\newblock {\em Nonlinearity}, 28(7):2081--2103, 2015.
	
	\bibitem{roberts1996quantitative}
	Gareth Roberts, Jeffrey Rosenthal, and Others.
	\newblock {Quantitative bounds for convergence rates of continuous time Markov
		processes}.
	\newblock {\em Electronic Journal of Probability}, 1, 1996.
	
	\bibitem{Sherlock2017}
	Chris Sherlock and Alexandre~H Thiery.
	\newblock {A Discrete Bouncy Particle Sampler}, 2017.
	
	\bibitem{Stramer1999}
	O.~Stramer and R.~L. Tweedie.
	\newblock {Langevin-Type Models I: Diffusions with Given Stationary
		Distributions and their Discretizations}.
	\newblock {\em Methodology and Computing in Applied Probability}, 306:283--306,
	1999.
	
	\bibitem{TuritsynChertkovVucelja2011}
	Konstantin~S. Turitsyn, Michael Chertkov, and Marija Vucelja.
	\newblock {Irreversible Monte Carlo algorithms for efficient sampling}.
	\newblock {\em Physica D: Nonlinear Phenomena}, 240(4-5):410--414, feb 2011.
	
	\bibitem{Twe94}
	R.~L. Tweedie.
	\newblock Topological conditions enabling use of {H}arris methods in discrete
	and continuous time.
	\newblock {\em Acta Appl. Math.}, 34(1-2):175--188, 1994.
	
	\bibitem{Vanetti2017}
	Paul Vanetti, Alexandre Bouchard-C{\^{o}}t{\'{e}}, George Deligiannidis, and
	Arnaud Doucet.
	\newblock {Piecewise Deterministic Markov Chain Monte Carlo}.
	\newblock {\em arXiv preprint arXiv: 1707.05296}, jul 2017.
	
	\bibitem{Wu2017}
	Changye Wu and Christian~P. Robert.
	\newblock {Generalized Bouncy Particle Sampler}.
	\newblock {\em arXiv preprint arXiv: 1706.04781}, jun 2017.
	
\end{thebibliography}

 \end{document}